 \tikzset{
  on each segment/.style={
    decorate,
    decoration={
      show path construction,
      moveto code={},
      lineto code={
        \path [#1]
        (\tikzinputsegmentfirst) -- (\tikzinputsegmentlast);
      },
      curveto code={
        \path [#1] (\tikzinputsegmentfirst)
        .. controls
        (\tikzinputsegmentsupporta) and (\tikzinputsegmentsupportb)
        ..
        (\tikzinputsegmentlast);
      },
      closepath code={
        \path [#1]
        (\tikzinputsegmentfirst) -- (\tikzinputsegmentlast);
      },
    },
  },
  mid arrow/.style={postaction={decorate,decoration={
        markings,
        mark=at position 0.6 with {\arrow[#1]{stealth}} 
      }}},
}
\def\headertitle{Normed representations of weight quivers}
\def\fstpage{1} 
\def\page{$\begin{matrix} {\color{white}0} \\ \thepage \end{matrix}$} 
\numberwithin{figure}{section}
\setlist[itemize]{leftmargin=35pt}
\setlist[enumerate]{leftmargin=35pt}
\newtheorem{theorem}{Theorem}[section]
\newtheorem{lemma}[theorem]{Lemma}
\newtheorem{corollary}[theorem]{Corollary}
\newtheorem{main theorem}[theorem]{Main Theorem}
\newtheorem{proposition}[theorem]{Proposition}
\newtheorem{definition}[theorem]{Definition}
\newtheorem{remark}[theorem]{Remark}
\newtheorem{example}[theorem]{Example}
\newtheorem{notation}[theorem]{Notation}
\newtheorem{question}[theorem]{Question}
\newtheorem{assumption}[theorem]{Assumption}
\numberwithin{equation}{section}
\def\orcid{
\begin{tikzpicture}[baseline=-1mm]
\filldraw[Green!35] (0,0) circle (5pt);
\filldraw[white] (0,0) node{\tiny\textbf{iD}};
\end{tikzpicture}
}
\def\NN{\mathbb{N}} 
\def\ZZ{\mathbb{Z}} 
\def\RR{\mathbb{R}} 
\def\CC{\mathbb{C}} 
\def\II{\mathbb{I}}
\def\FF{\mathbb{F}}
\def\EE{\mathbb{E}}
\newcommand{\Ker}{\operatorname{Ker}}
\newcommand{\Ima}{\operatorname{Im}}
\newcommand{\Pic}{Figure\ }
\newcommand{\modcat}{\mathsf{mod}}
\newcommand{\Modcat}{\mathsf{Mod}}
\newcommand{\repcat}{\mathsf{rep}}
\newcommand{\Repcat}{\mathsf{Rep}}
\newcommand{\kk}{{l\hspace{-1mm}k}} 
\newcommand{\Q}{\mathcal{Q}} 
\def\I{\mathcal{I}}
\def\fcts{\mathfrak{s}}
\def\fctt{\mathfrak{t}}
\newcommand{\J}{\mathcal{J}}
\newcommand{\e}{\varepsilon}
\newcommand{\Hom}{\mathrm{Hom}} %
\newcommand{\End}{\mathrm{End}} %
\newcommand{\rad}{\mathrm{rad}}
\newcommand{\op}{\mathrm{op}}
\newcommand{\w}[1]{\widehat{#1}}
\def\alg{\mathit{\Lambda}}
\def\Func{\mathrm{Func}}
\def\bfS{\mathbf{S}}
\def\Nor{\mathsf{Nor}}
\def\Ban{\mathsf{Ban}}
\def\ave{\mathfrak{A}}
\def\heart{{\color{red}\pmb{\heartsuit}}}
\def\compos{\ \lower-0.2ex\hbox{\tikz\draw (0pt, 0pt) circle (.1em);} \ }
\newcommand{\defines}{\it\color{red}}
\title{\bf Normed representations of weight quivers
}
\author{
Yu-Zhe Liu$^{\ref{Author1}, \href{https://orcid.org/0009-0005-1110-386X}{\orcid}\ref{orcid1},~\ref{CorrespondingAuthor}}$
}
\date{ }
\begin{document}






\maketitle

\begin{enumerate}[label=\textbf{\color{red}\arabic*}] \footnotesize

  \item
    \begin{center}
      School of Mathematics and Statistics, Guizhou University,
      Guiyang 550025, Guizhou, China;

      E-mail:  \url{liuyz@gzu.edu.cn} / \url{yzliu3@163.com}
    \end{center} \label{Author1}
\end{enumerate}
\vspace{2mm}
\begin{enumerate}[label=\textbf{\color{red}$\dag$}]
  \item \footnotesize
    \begin{center}
      Corresponding author
    \end{center} \label{CorrespondingAuthor}
\end{enumerate}
\vspace{2mm}
\begin{enumerate} \footnotesize
    \item[{\orcid}] \centering  ORCID: \href{https://orcid.org/0009-0005-1110-386X}{0009-0005-1110-386X}
      \label{orcid1}
\end{enumerate}

\vspace{1mm}

\begin{adjustwidth}{1cm}{1cm}
  \noindent \footnotesize
  \textbf{Abstract}: Let $A$ and $B$ be two tensor rings given by weight quivers.
    We introduce norms for tensor rings and $(A,B)$-bimodules, and define an important category $\mathscr{A}^p_{\varsigma}$ in this paper whose object is a triple $(N,v,\delta)$ given by an $(A,B)$-bimodule $N$, a special element $v\in V$ satisfying some special conditions, and a special $(A,B)$-homomorphism $\delta: N^{\oplus_p 2^{\dim A}} \to N$ and each morphism $(N,v,\delta) \to (N',v',\delta')$ is given by an $(A,B)$-homomorphism $\theta: N\to N'$ such that $\theta(v)=v'$ and $\delta' \theta^{\oplus 2^{\dim A}} = \theta\delta$ hold.
    We show that $\mathscr{A}^p_{\varsigma}$ has an initial object such that Daniell integration, Bochner integration, Lebesgue integration, Stone--Weierstrass Approximation Theorem, power series expansion, and Fourier series expansion are morphisms in $\mathscr{A}^p_{\varsigma}$ starting with this initial object.

\vspace{1mm}


  \noindent
    \textbf{2020 Mathematics Subject Classification}:
16G10; 
46B99; 
46M40. 
     \label{2020MSC}

\vspace{1mm}

  \noindent
    \textbf{Keywords}:
     Categorification; finite-dimensional algebras; normed modules; Banach spaces; abstract integration.
     \label{Keywords}
\end{adjustwidth}

\tableofcontents


\def\la{\langle} 
\def\ra{\rangle} 
\def\lala{\langle\!\langle}
\def\rara{\rangle\!\rangle}
\def\=<{\leqslant}
\def\>={\geqslant}
\def\vecd{\pmb{d}}
\def\vecg{\pmb{g}}
\def\spa{\mathrm{span}}
\def\Gal{\mathrm{Gal}}
\def\Mat{\mathbf{Mat}}
\def\bfE{\pmb{E}}
\def\op{\mathrm{op}}
\def\norm{\mathfrak{n}}
\def\bigboxplus{\mathop{ \raisebox{-0.3em}{ \scalebox{1.75}{\text{\(\boxplus\)}} } }}

\def\scrN{\mathscr{N}\!or}
\def\scrA{\mathscr{A}}
\def\Pc{\mathds{P}}

\def\invlim{\underleftarrow{\lim}}
\def\dirlim{\underrightarrow{\lim}}

\def\dimA{d_A}
\def\dimB{d_B}
\def\homo{\varsigma}

\def\dd{\mathrm{d}}
\def\Bochner{\mathrm{B}}
\def\Lebesgue{\mathrm{L}}

\newcommand{\basis}[1]{\mathfrak{B}_{#1}}

\section{Introduction}

There has been a growing interest in the algebraic characterization of analysis in recent times. For instance, \cite{BCS2006diff, BCS2015diff, Lemay2019, APL2021diff, IL2023ana, Lemay2023} investigated the categorical descriptions of differential, and \cite{Bax1960,Sag1965ana,Rot1969-1,Rot1969-2, CL2018Cart-int, CL2018int} explored the categorical/algebraic descriptions of integrations.
Lebesgue integration was formulated by Henri Lebesgue as a generalization of Riemann integration \cite{L1928} in 1902. It has being extensively used in numerous fields of analysis.

In \cite{Lei2023FA}, Leinster offered a method to characterize Lebesgue integration and $L_p$-spaces by using a specific category $\scrA^p$ ($p\>= 1$). More precisely, Lebesgue integration can be conceptualized as a morphism $T: L_p([0,1]) \to \FF$ equipped with a juxtaposition map $\gamma: L_p([0,1]) \oplus L_p([0,1]) \to L_p([0,1]) $ and an average map $\ave: \FF \oplus \FF$, $(x_1,x_2)\mapsto \frac{x_1+x_2}{2}$. The categorification of integration has garnered scholarly attention for an extended period, resulting in the establishment of integral categories, cf. \cite{Sag1965ana, CL2018Cart-int, CL2018int}.
Moreover, Rota--Baxter algebra \cite{Bax1960,Rot1969-1,Rot1969-2} provides another algebraic description of integration, which is also recognized as a leading area of research in algebra.

Normed modules originally denoted vector spaces over a field equipped with a norm, primarily used for analysis of function spaces, cf. \cite[etc]{Guo2013, GP2020}.
A primary objective of this paper is to offer a categorical description of generalized $L_p$-space $\w{\bfS_{\homo}(\II_A)}$ in which integrable functions are precisely $f: \II_A \to B$. Furthermore, we provide a categorical description for abstract integrations by using a morphism $\w{T}$ originates from an initial object in $\mathscr{A}^p_{\varsigma}$, and $\w{T}$ satisfies the axiomatic definition of Daniell integration given in \cite{Dan1918}.
In summary, our primary focus to investigate the following question.

\begin{question} \rm \label{Quest:1}
Let $A$ and $B$ be two finite-dimensional $\kk$-algebras, $f:A\to B$ be a function, and $X$ be a subset of $A$.
\begin{enumerate}[label=(Q\arabic*)]
  \item Under what conditions is $f|_X$ integrable? \label{Q1}
  \item If $f|_X$ is integrable, then what is its integral? \label{Q2}
  \item For a vector space $V$, if it is a normable vector space, then we can define integration in many cases. Is the definition of integration unique? \label{Q3}
\end{enumerate}
\end{question}

One of the main purposes of this article is to answer Question \ref{Quest:1}. In \cite{Lei2023FA}, $L_p([0,1])$ is a vector space over $\RR$ whose $\RR$-action is defined as $\RR \times L_p([0,1]) \to L_p([0,1])$, $(r,f) \mapsto (rf: x\mapsto rf(x))$.
In \cite{LLHZ2025}, authors provided a categorification $\w{\bfS_{\homo}(\II_A)}$ of $L_p([0,1])$ whose elements are integrable functions $f:\II_A \to \kk$ and showed that $\w{\bfS_{\homo}(\II_A)}$ is a left $A$-module with the left $A$-action $A \times \w{\bfS_{\homo}(\II_A)} \to \w{\bfS_{\homo}(\II_A)}$, $(a,f)\mapsto (a.f: x \mapsto a.f(x) := \homo(a)f(x))$ ($a\in A$, $f(x)\in \kk$).
The definition $a.f(x):= \homo(a)f(x)$ indicates that setting a homomorphism $\homo: A \to \kk$ of algebras is necessary.
To answer Question \ref{Quest:1}, we need to define the action of $a\in A$ on a function $f: X\to B$ defined on $X\subseteq A$. It follows that the set of integrable functions, still written as $\w{\bfS_{\homo}(\II_A)}$, may be an $(A, B)$-bimodule,
and we need a homomorphism $\homo: A \to B$ between two finite-dimensional algebras.
Meanwhile, an important perspective is that the normed module $L_p([0,1])$ in \cite{Lei2023FA} and the normed module $\w{\bfS_{\homo}(\II_A)}$ in \cite{LLHZ2025} are seen as a normed $(\kk,\kk)$-bimodule and a normed $(A,\kk)$-bimodule, respectively.
Thus, we extend the definition of normed module in this paper, especially providing a norm to $\w{\bfS_{\homo}(\II_A)}$. To do this, we have \textbf{three difficulties} that have not been encountered in references \cite{Lei2023FA, LLHZ2025,LLL2025}:
\begin{itemize}
  \item In the case of $\homo:A\to B$, what is the $(A,B)$-bimodule structure and norm for $\w{\bfS_{\homo}(\II_A)}$?
  \item Why does $\w{\bfS_{\homo}(\II_A)}$, as a bimodule, need a $(A,B)$-homomorphism $\Pc: B^{\times I} \to\w{\bfS_{\homo}(\II_A)}$ in \ref{Np2} such that $\Pc((1)_{1\times I}) = (\mathbf{1}_{\II_A}:\II_A \to \{1_B\})$?
  \item How do $\FF$-isomorphisms in the Galois groups act on some elements lying in an extension of $\FF$ during the proof process of certain key conclusions (such as Lemma \ref{lemm:A})?
\end{itemize}

Setting $\FF$ a base field in this paper, and for any algebra $\alg$, we use $1_{\alg}$ and $0_{\alg}$ to present the identity and zero in $\alg$. The paper is organized as follows.
Second \ref{sect:Perlim} is about some basic knowledge, mainly reviewing tensor rings, which are a more general class of finite-dimensional algebras than quiver algebras. The algebras used in this article are all tensor rings, so we can obtain more general results than \cite{Lei2023FA, LLHZ2025}.
In Section \ref{sect:norm}, we introduce norms for tensor rings and the representation of weight quiver.
Given a homomorphism between two tensor rings $A$ and $B$. We introduce two categories $\scrN^p_{\homo}$ and $\scrA^p_{\homo}$ in Section \ref{sect:two cats}, where $\scrN^p_{\homo}$ is a category whose objects are normed $(A,B)$-bimodules with some conditions (see three conditions \ref{Np1}, \ref{Np2}, and \ref{Np3} given in Definition \ref{def:Np}) and whose morphisms are special $(A,B)$-homomorphisms, and $\scrA^p_{\homo}$ is a full subcategory of $\scrN^p_{\homo}$ whose objects are Banach $(A,B)$-bimodules.
In algebraic convention, objects in $\scrN^p_{\homo}$ and $\scrA^p_{\homo}$ are defined as triples.
In this Section, we provide the first result of this paper as follows.

\begin{theorem}[{\rm Theorem \ref{thm:1}}] \label{thmA}
The category $\scrA^p_{\homo}$ has an initial objects which is a triple $(\w{\bfS_{\homo}(\II_A)}, \mathbf{1}, \w{\gamma}_{\xi})$ of the completion $\w{\bfS_{\homo}(\II_A)}$ of the $(A,B)$-bimodule $\bfS_{\homo}(\II_A)$, function $\mathbf{\II_A}: \II_A \to \{1_B\}$, and a  juxtaposition map such that \ref{Np1}, \ref{Np2}, and \ref{Np3} hold.
\end{theorem}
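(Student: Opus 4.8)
The plan is to adapt Leinster's characterization of $L_p([0,1])$ in \cite{Lei2023FA} to the present two-sided setting. Two things must be established: that the triple $(\w{\bfS_{\homo}(\II_A)}, \mathbf{1}_{\II_A}, \w{\gamma}_{\xi})$ is genuinely an object of $\scrA^p_{\homo}$, and that it is initial there. For the first point I would check that $\w{\bfS_{\homo}(\II_A)}$, the completion of $\bfS_{\homo}(\II_A)$ with respect to the $p$-norm introduced in Section \ref{sect:norm}, carries a Banach $(A,B)$-bimodule structure (the left $A$-action twisted through $\homo$, the right $B$-action pointwise), that the constant function $\mathbf{1}_{\II_A}$ is an admissible distinguished element, and that the extension $\w{\gamma}_{\xi}$ of the juxtaposition map to the completion still satisfies \ref{Np1}, \ref{Np2}, and \ref{Np3}. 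Axiom \ref{Np2}, which demands an $(A,B)$-homomorphism $\Pc\colon B^{\times I}\to\w{\bfS_{\homo}(\II_A)}$ with $\Pc((1)_{1\times I})=\mathbf{1}_{\II_A}$, is immediate from the way $\bfS_{\homo}(\II_A)$ is built out of $B$-valued step functions; the remaining verifications reduce to continuity once \ref{Np1} and \ref{Np3} are known on the dense subring of step functions.

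The core is the universal property. Given an arbitrary object $(N,v,\delta)$ of $\scrA^p_{\homo}$, I would construct the required morphism $\theta\colon \w{\bfS_{\homo}(\II_A)}\to N$ in stages. On $\mathbf{1}_{\II_A}$ we are forced to set $\theta(\mathbf{1}_{\II_A})=v$. Every step function in $\bfS_{\homo}(\II_A)$, i.e. one that is constant with value in $B$ on each dyadic subinterval of $\II_A$, is produced from $\mathbf{1}_{\II_A}$ by finitely many applications of the bimodule operations, the map $\Pc$, and the juxtaposition map $\w{\gamma}_{\xi}$; applying $\theta$ together with the constraints $\theta(v)=v'$ and $\delta\circ\theta^{\oplus 2^{\dim A}}=\theta\circ\w{\gamma}_{\xi}$ forces the value of $\theta$ on every step function. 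One must then check that this prescription is well defined, independent of the chosen dyadic presentation; the key point is the self-similarity identity $\II_A\cong\II_A\sqcup\dots\sqcup\II_A$ ($2^{\dim A}$ copies, rescaled by the factor $2^{-1/p}$ appearing in the $\ell_p$-style direct sum $\oplus_p$), which forces the value computed from a partition to agree with the value computed from a refinement — this is exactly where the averaging datum encoded in $\delta$ (the analogue of Leinster's map $\ave$) is consumed. One then verifies that $\theta$ is a contraction on step functions using the defining recursion of the $p$-norm, and, since step functions are dense in $\w{\bfS_{\homo}(\II_A)}$ and $N$ is a Banach space, $\theta$ extends uniquely to a bounded $(A,B)$-homomorphism; the identity $\delta\circ\theta^{\oplus 2^{\dim A}}=\theta\circ\w{\gamma}_{\xi}$ then persists by continuity. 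Uniqueness of $\theta$ is automatic, since any morphism out of $(\w{\bfS_{\homo}(\II_A)},\mathbf{1}_{\II_A},\w{\gamma}_{\xi})$ is already forced on the dense set of step functions. (Equivalently, one may first prove that $\bfS_{\homo}(\II_A)$ is initial in $\scrN^p_{\homo}$ and then pass to the completion, using that morphisms into a Banach object extend uniquely.)

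The main obstacle is precisely the two features absent from \cite{Lei2023FA, LLHZ2025, LLL2025}. First, the genuinely two-sided module structure: one must track the right $B$-action through the ``step functions generate everything'' argument, which is what the extra datum $\Pc$ in \ref{Np2} is for, so that $\theta$ is simultaneously left $A$-linear (through $\homo$) and right $B$-linear. Second, and more delicate, is the well-definedness of $\theta$ on step functions when $\FF$ is not algebraically closed: the relevant scalar identities live in a finite extension of $\FF$ carrying a Galois action, and one needs a descent argument — the content of Lemma \ref{lemm:A} — to pull them back to $\FF$. I would therefore isolate and prove that well-definedness lemma first; after it the rest of the argument is the standard ``dense subobject plus completeness'' extension.
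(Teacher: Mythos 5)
Your proposal follows essentially the same route as the paper: what you describe as forcing $\theta$ on dyadic step functions level by level is formalized there as the filtration $E_0\subseteq E_1\subseteq\cdots\subseteq\bfS_{\homo}(\II_A)$ with isomorphisms $\gamma_{\xi}|_{E_u}\colon E_u^{\oplus_p 2^{\dimA}}\to E_{u+1}$ (Lemma \ref{lemm:Eu}), the recursion $\theta_{u+1}=\delta\compos\theta_u^{\oplus 2^{\dimA}}\compos\gamma_{\xi}|_{E_{u+1}}^{-1}$ starting from $\theta_0(\mathbf{1}_{\II_A})=v$, the passage to $\w{\bfS_{\homo}(\II_A)}\cong\dirlim E_u$ after the norm computation $\Vert\theta_u\Vert=\Vert\delta\Vert$ (bounded, not necessarily contractive), and uniqueness by the same induction-plus-density argument (Propositions \ref{prop:existence} and \ref{prop:uniq}). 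One small correction: the Galois-descent difficulty you attach to Lemma \ref{lemm:A} does not actually enter this proof --- that lemma is only needed later to make $(B,\mu_{\II_A}(\II_A)1_B,\ave)$ an object of $\scrA^p_{\homo}$ in Section \ref{sect:app-int}; the only well-definedness issue in Theorem \ref{thm:1} is the compatibility of the $\theta_u$ with the inclusions $E_u\subseteq E_{u+1}$, which you correctly identify as the point where $\delta$ is consumed.
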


\noindent
Furthermore, we obtain the second main result of this paper.

\begin{theorem}[{\rm Theorem \ref{thm:2}}] \label{thmB}
Assume that $\FF$ is a field with a field extension $\FF/\RR$, and $\FF$, $A$ and $B$ are completed.
Then the triple $(\bfS_{\homo}(\II_A), \mathbf{1}_{\II_A}$, $\gamma_{\xi})$ in $\scrN^p_{\homo}$ is an $\scrA^p_{\homo}$-initial object. Thus, there is a unique morphism $h:(\bfS_{\homo}(\II_A),\mathbf{1}_{\II_A},\gamma_{\xi}) \to (N,v,\delta)$ in $\scrN^p_{\homo}$, such that the diagram
\[ \xymatrix@C=1.5cm{
  (\bfS_{\homo}(\II_A),\mathbf{1}_{\II_A},\gamma_{\xi})
  \ar[r]^{h} \ar[d]_{\subseteq}
& (N,v,\delta) \\
 (\w{\bfS_{\homo}(\II_A)},\mathbf{1}_{\II_A},\w{\gamma}_{\xi})
  \ar[ru]_{\w{h}}
 &
} \]
commutes. Here, $\w{h}$ is an $(A,B)$-homomorphism induced by the completion $\w{\bfS_{\homo}(\II_A)}$ of $\bfS_{\homo}(\II_A)$, and it is an extension of $h$.
\end{theorem}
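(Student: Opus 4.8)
\emph{Proof idea.} The plan is to deduce the statement from Theorem~\ref{thm:1} via the universal property of the norm completion, the point being that although $\bfS_{\homo}(\II_A)$ need not itself be a Banach $(A,B)$-bimodule, every $\scrN^p_{\homo}$-morphism out of it into an object of $\scrA^p_{\homo}$ factors uniquely through the canonical dense inclusion $\iota\colon\bfS_{\homo}(\II_A)\hookrightarrow\w{\bfS_{\homo}(\II_A)}$. First I would note that the assertion that $(\bfS_{\homo}(\II_A),\mathbf 1_{\II_A},\gamma_\xi)$ is an $\scrA^p_{\homo}$-initial object unwinds to: for every object $(N,v,\delta)$ of $\scrA^p_{\homo}$ there is exactly one $\scrN^p_{\homo}$-morphism from it to $(N,v,\delta)$. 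Fix such an $(N,v,\delta)$; the hypothesis that $\FF$, $A$, $B$ are complete makes $N$ a genuine Banach $(A,B)$-bimodule. I would also record at the outset that $(\bfS_{\homo}(\II_A),\mathbf 1_{\II_A},\gamma_\xi)$ does satisfy \ref{Np1}, \ref{Np2}, \ref{Np3}, so it is a legitimate object of $\scrN^p_{\homo}$, and that $\iota$ is an isometric $(A,B)$-homomorphism fixing $\mathbf 1_{\II_A}$, intertwining $\gamma_\xi$ with $\w\gamma_\xi$, and compatible with the maps $\Pc$ of \ref{Np2}.

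For existence I would invoke Theorem~\ref{thm:1} to obtain the unique $\scrA^p_{\homo}$-morphism $\w h\colon(\w{\bfS_{\homo}(\II_A)},\mathbf 1,\w\gamma_\xi)\to(N,v,\delta)$ and set $h:=\w h\circ\iota$. Then $h$ is a contractive $(A,B)$-homomorphism with $h(\mathbf 1_{\II_A})=v$, and since $\iota$ intertwines $\gamma_\xi$ with $\w\gamma_\xi$ while $\w h$ intertwines $\w\gamma_\xi$ with $\delta$, the composite $h$ intertwines $\gamma_\xi$ with $\delta$; the compatibility with the $\Pc$-maps demanded by \ref{Np2} is inherited in the same way. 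Thus $h$ is a $\scrN^p_{\homo}$-morphism, the asserted triangle commutes by construction, and $\w h$ is its stated extension. For uniqueness, let $h'\colon(\bfS_{\homo}(\II_A),\mathbf 1_{\II_A},\gamma_\xi)\to(N,v,\delta)$ be an arbitrary $\scrN^p_{\homo}$-morphism. Morphisms in $\scrN^p_{\homo}$ are bounded $(A,B)$-homomorphisms (in fact contractions), $\bfS_{\homo}(\II_A)$ is dense in $\w{\bfS_{\homo}(\II_A)}$, and $N$ is complete, so $h'$ extends uniquely to a continuous $\w{h'}\colon\w{\bfS_{\homo}(\II_A)}\to N$; using continuity of $\w\gamma_\xi$, $\delta$ and the $\Pc$-maps together with density, the relations making $\w{h'}$ an $(A,B)$-homomorphism, the relation $\w{h'}(\mathbf 1)=v$, the relation $\delta\circ(\w{h'})^{\oplus 2^{\dim A}}=\w{h'}\circ\w\gamma_\xi$, and the $\Pc$-compatibility all propagate from $\bfS_{\homo}(\II_A)$ to $\w{\bfS_{\homo}(\II_A)}$. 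Hence $\w{h'}$ is an $\scrA^p_{\homo}$-morphism out of the initial object of Theorem~\ref{thm:1}, so $\w{h'}=\w h$; restricting along $\iota$ gives $h'=h$, which also shows that $h$ is the unique $\scrN^p_{\homo}$-morphism making the triangle commute.

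The step I expect to be the genuine obstacle is not this diagram chase but the verification that the continuous extension $\w{h'}$ (and $\w h$ itself) respects \emph{the whole} $\scrN^p_{\homo}$-structure — in particular that it is $\FF$-linear and not merely $\RR$-linear, and that it is compatible with the map $\Pc\colon B^{\times I}\to\w{\bfS_{\homo}(\II_A)}$ with $\Pc((1)_{1\times I})=\mathbf 1_{\II_A}$ of \ref{Np2}. Since the norm on $\bfS_{\homo}(\II_A)$ is built using the extension $\FF/\RR$ and its elements live in field extensions of $\FF$, upgrading a bounded $\RR$-linear extension to a genuine $(A,B)$-homomorphism forces one to control how the $\FF$-isomorphisms in the pertinent Galois groups act on limits of elements of $\bfS_{\homo}(\II_A)$; I would isolate this compatibility as a dedicated lemma parallel to Lemma~\ref{lemm:A}. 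The completeness hypotheses on $\FF$, $A$, $B$ enter exactly here — to make $N$ complete so that the extension exists, and to drive the Galois-descent argument — after which the remaining density and continuity verifications are routine.
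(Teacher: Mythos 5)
Your overall route is the paper's route: embed $\bfS_{\homo}(\II_A)$ into its completion and quote Theorem~\ref{thm:1}. The paper, however, disposes of the whole statement in two lines via Lemma~\ref{lemm:initial}, because its definition of an $\scrA^p_{\homo}$-initial object only demands a unique morphism $h$ \emph{making the displayed triangle commute}; since $\w{h}$ is already unique by Theorem~\ref{thm:1}, the composite $h=\w{h}\circ\iota$ is forced and nothing further needs to be checked. You instead unwind ``$\scrA^p_{\homo}$-initial'' as the stronger claim that \emph{every} $\scrN^p_{\homo}$-morphism $h'\colon(\bfS_{\homo}(\II_A),\mathbf 1_{\II_A},\gamma_\xi)\to(N,v,\delta)$ coincides with $\w h\circ\iota$, and your argument for that rests on the assertion that morphisms in $\scrN^p_{\homo}$ are bounded (even contractive), which is not part of Definition~\ref{def:Np}: conditions \ref{H1}--\ref{H2} impose no norm control on $\theta$, so the continuous extension $\w{h'}$ you invoke need not exist for an arbitrary $h'$. (Boundedness is derived in the paper only for the specific maps $\theta_u$ built in Proposition~\ref{prop:existence}, via $\Vert\theta_u\Vert=\Vert\delta\Vert$; if you want the stronger uniqueness you would have to reproduce such an estimate for an arbitrary $h'$, e.g.\ by propagating it through the isomorphisms $\gamma_\xi|_{E_u}$.) For the theorem as actually stated your existence argument and the triangle-commutation uniqueness suffice, so the extra machinery --- including the proposed Galois-descent lemma --- is not needed here.
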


\noindent
Sections \ref{sect:app-int} and \ref{sect:app-approx} are two applications.
The third main result of this paper provides a categorical description of abstract integration which satisfies three conditions (see \ref{I1}, \ref{I2}, and \ref{I3} given in Section \ref{sect:app-int}). See the following theorem.

\begin{theorem} \label{thmC}
Assume that $\FF$ is a field with a field extension $\FF/\RR$, and $\FF$, $A$ and $B$ are completed.
\begin{itemize}
\item[\rm(1)] {\rm (Proposition \ref{prop:A})}
  The category $\scrA^p_{\homo}$ contains an object which is of the form $(B$, $\mu_{\II_A}(\II_A), \ave)$.
  Here, $\mu_{\II_A}$ is a measure, and $\ave$ is a map $B^{\oplus 2^{\dim_{\FF}A}} \to A$ sending each element $(b_1,b_2,\ldots, b_{2^{\dim_{\FF}A}})$ to a weighted average.

\item[\rm(2)] {\rm (Theorem \ref{thm:3})}
There exists a unique morphism $T: (\bfS_{\homo}(\II_A), \mathbf{1}_{\II_A}, \gamma_{\xi}) \to (B, \mu_{\II_A}(\II_A)1_B, \ave)$ in ${\scrN^1_{\homo}}$ such that
\[ \xymatrix@C=1.5cm{
  (\bfS_{\homo}(\II_A),\mathbf{1}_{\II_A},\gamma_{\xi})
  \ar[r]^{T} \ar[d]_{\subseteq}
& (B, \mu_{\II_A}(\II_A)1_B, \ave) \\
  (\w{\bfS_{\homo}(\II_A)},\mathbf{1}_{\II_A},\w{\gamma}_{\xi})
  \ar[ru]_{\w{T}}
 &
} \]
commutes. Here, $\w{T}$ is an $(A,B)$-homomorphism in $\scrA^p_{\homo}$ induced by the completion $\w{\bfS_{\homo}(\II_A)}$ of $\bfS_{\homo}(\II_A)$. It can be written as $\displaystyle (\scrA^1_{\homo})\int_{\II_A}(\cdot)\dd\mu_{\II_A}$ in the case of $p=1$.
Furthermore, if $p=1$, then we have the following results:
\begin{itemize}
  \item[\rm(a)] $\w{T}$ sends each function $f = \sum_i b_i \mathbf{1}_{I_i} \in \bfS_{\homo}(\II_A)$ $(\forall i\ne j, I_i \cap I_j = \varnothing$, and $\II_A = \bigcup_i I_i)$ to an element $\sum_i b_i \mu_{\II_A}(I_i)$;
  \item[\rm(b)] $\w{T}$ is an $(A,B)$-homomorphism between two $(A,B)$-bimodules;
  \item[\rm(c)] for each $f\in\w{\bfS_{\homo}(\II_A)}$, we have
    \[ (\scrA^1_{\homo})\int_{\II_A} \Vert f\Vert \dd\mu_{\II_A}
    = \omega 1_B \in \RR^{\>=0} 1_B := \{r1_B \mid r\in\RR^{\>=0}\} ~(\subseteq B) \]
    where $\Vert f\Vert$ is the function $\Vert f \Vert : \II_A \to B, x \mapsto \Vert f(x)\Vert_{B, p}$, and $\Vert \cdot\Vert_{B, p}$ is a norm defined on $B$;
  \item[\rm(d)] for each nonincreasing Cauchy sequence $\{f_n\}_{n\in\NN}$ in $\w{\bfS_{\homo}(\II_A)}$ with $\invlim f_n = 0: \II_A \to \{0_B\}$, we have
    \[ \invlim (\scrA^1_{\homo}) \int_{\II_A} f_n \dd\mu_{\II_A}
    = 0_B
    = (\scrA^1_{\homo}) \int_{\II_A} \invlim f_n \dd\mu_{\II_A}. \]
\end{itemize}
\end{itemize}
\end{theorem}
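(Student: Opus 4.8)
The plan is to build everything from the initial-object property established in Theorems \ref{thmA} and \ref{thmB}, so that each clause of the statement becomes a formal consequence of universality rather than a fresh analytic construction. First I would prove part (1): I must exhibit the triple $(B, \mu_{\II_A}(\II_A)1_B, \ave)$ as an object of $\scrA^p_{\homo}$. This amounts to checking that $B$, with its given norm $\Vert\cdot\Vert_{B,p}$, is a Banach $(A,B)$-bimodule (it is, since $B$ is a complete finite-dimensional algebra and $A$ acts through the homomorphism $\homo$), that the distinguished element $\mu_{\II_A}(\II_A)1_B$ satisfies the conditions required of the ``$v$''-component in Definition \ref{def:Np}, and — the one computation of substance here — that the weighted-average map $\ave\colon B^{\oplus_p 2^{\dim_\FF A}}\to B$ is a bona fide $(A,B)$-homomorphism compatible with conditions \ref{Np1}, \ref{Np2}, \ref{Np3}. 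The $2^{\dim_\FF A}$ summands reflect the branching of $\II_A$ used throughout the paper, and the weights are chosen exactly so that $\ave$ applied to the constant tuple reproduces the distinguished element; this is the analogue of Leinster's $\ave(x_1,x_2)=\frac{x_1+x_2}{2}$ fixing $1$, and the verification is a direct unwinding of the bimodule axioms together with the defining relation $\Pc((1)_{1\times I})=\mathbf{1}_{\II_A}$.

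For part (2), Theorem \ref{thm:3}: once (1) places $(B,\mu_{\II_A}(\II_A)1_B,\ave)$ in $\scrA^p_{\homo}$, and Theorem \ref{thmB} tells us $(\bfS_{\homo}(\II_A),\mathbf{1}_{\II_A},\gamma_\xi)$ is an $\scrA^p_{\homo}$-initial object in $\scrN^p_{\homo}$, the existence and uniqueness of $T$ is immediate from initiality, and the commuting triangle with $\w{T}$ is exactly the factorization through the completion recorded in Theorem \ref{thmB}. Writing $\w{T}$ as $(\scrA^1_{\homo})\int_{\II_A}(\cdot)\,\dd\mu_{\II_A}$ is then just notation. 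It remains to identify this abstract $\w{T}$ concretely, which is clauses (a)–(d). For (a), I would evaluate $T$ on a simple function $f=\sum_i b_i\mathbf{1}_{I_i}$ by using that $T$ intertwines the juxtaposition map $\gamma_\xi$ with $\ave$: decomposing $\II_A$ into the $I_i$ exhibits $f$ as an iterated juxtaposition of the constant pieces $b_i\mathbf{1}_{I_i}$, and $T$ sends each such piece to $b_i\mu_{\II_A}(I_i)$ because $T(\mathbf{1}_{\II_A})=\mu_{\II_A}(\II_A)1_B$ together with $(A,B)$-linearity forces $T(b_i\mathbf{1}_{I_i})=b_i\mu_{\II_A}(I_i)$; summing via $\ave$ with the correct weights yields $\sum_i b_i\mu_{\II_A}(I_i)$. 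Clause (b) is inherited from $T$ being a morphism in $\scrN^1_{\homo}$, hence an $(A,B)$-homomorphism, and $\w{T}$ its continuous extension.

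Clauses (c) and (d) are where the analytic content concentrates, and I expect (d) to be the main obstacle. For (c) I would apply $\w{T}$ to the function $\Vert f\Vert\colon x\mapsto \Vert f(x)\Vert_{B,p}$, which by construction takes values in $\RR^{\geq 0}1_B$; since $\w{\bfS_{\homo}(\II_A)}$ restricted to $\RR^{\geq 0}1_B$-valued functions is carried by $\w{T}$ into $\RR^{\geq 0}1_B$ (the positivity being preserved because $\ave$ is a genuine weighted average with nonnegative weights), the value is $\omega 1_B$ for some $\omega\in\RR^{\geq 0}$, which is what is claimed. For (d) — the monotone/dominated convergence statement — I would take a nonincreasing Cauchy sequence $\{f_n\}$ with $\invlim f_n=0$, use completeness of $\FF$, $A$, $B$ to guarantee $\invlim f_n$ exists in $\w{\bfS_{\homo}(\II_A)}$ and equals the zero function, then invoke continuity of $\w{T}$ with respect to the $L_1$-type norm to pass the limit inside: $\invlim \w{T}(f_n)=\w{T}(\invlim f_n)=\w{T}(0)=0_B$. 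The delicate point, and the one I would spend the most care on, is that ``$\invlim f_n=0$ pointwise'' together with the nonincreasing Cauchy hypothesis actually forces convergence to $0$ \emph{in the norm} of $\w{\bfS_{\homo}(\II_A)}$ — this is precisely the place where Dini-type monotonicity and the completeness assumptions on the base field are doing real work, and where the Galois-theoretic manipulation of elements of the extension $\FF/\RR$ flagged in the introduction (and used in Lemma \ref{lemm:A}) is needed to control $\Vert f_n\Vert_{B,p}$ uniformly; once norm convergence is in hand, continuity of $\w{T}$ closes the argument.
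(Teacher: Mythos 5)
Your overall architecture (part (1) by direct verification, part (2) by initiality plus an explicit identification of the morphism) matches the paper's, but two steps as you describe them would not go through. First, in clause (a) you claim that ``$T(\mathbf{1}_{\II_A})=\mu_{\II_A}(\II_A)1_B$ together with $(A,B)$-linearity forces $T(b_i\mathbf{1}_{I_i})=b_i\mu_{\II_A}(I_i)$.'' It does not: the left $A$-action and right $B$-action only rescale the \emph{values} of a function, so $\mathbf{1}_{I_i}$ for an arbitrary box $I_i$ is not in the sub-bimodule generated by $\mathbf{1}_{\II_A}$, and finitely many applications of $\gamma_{\xi}$ only produce the dyadic cells $\Pi_{(\sigma_1,\ldots,\sigma_{\dimA})}$, i.e.\ the functions in $\bigcup_u E_u$, not a general elementary simple function. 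The paper avoids this entirely by going in the opposite direction: it \emph{defines} $\tilde{T}(f)=\sum_i b_i\mu_{\II_A}(I_i)$ on all of $\bfS_{\homo}(\II_A)$, verifies \ref{H1} and \ref{H2} by computation, and then gets (a) for free from the uniqueness in Theorem \ref{thm:2}. That verification is also where the Galois-theoretic point actually lives: one needs $b\,\mu_{\II_A}(I_i)=\mu_{\II_A}(I_i)\,b$ for $b\in B$, which holds because $\mu_{\II_A}(I_i)\in\RR\subseteq\FF$ is fixed by every $g_{a_{\wp,j}}\in\Gal(\FF_{\fcts(a_{\wp,j})}\cap\FF_{\fctt(a_{\wp,j})}/\FF)$; the same argument underlies Lemma \ref{lemm:A} for part (1). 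You place this Galois input in clause (d), where it plays no role.

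Second, in clause (d) you anticipate a Dini-type argument to upgrade pointwise convergence to norm convergence. There is no such step in the paper, because $\invlim f_n=0$ is by definition norm convergence here (the completion is built from Cauchy sequences modulo $\Vert\cdot\Vert$-null sequences), so the hypothesis already hands you $\Vert f_n\Vert\to 0$. What (d) — and (c) — actually require, and what your sketch omits, is the boundedness of $\w{T}$ as an $\FF$-linear map, which the paper establishes by the operator-norm induction $\Vert T|_{E_0}\Vert=\Vert T|_{E_1}\Vert=\cdots=\mu_{\II_A}(\II_A)$ using the definition of $\Vert\cdot\Vert_{E_{u+1}}$ from Lemma \ref{lemm:dir sum}; boundedness is what licenses $\w{T}(\invlim h_t)=\invlim\w{T}(h_t)$, from which (d) is immediate and (c) follows by writing $\Vert f\Vert 1_B$ as a limit of simple functions with coefficients in $\RR^{\>=0}$ and using that $\RR^{\>=0}1_B$ is closed. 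Your positivity remark for (c) is correct in spirit but needs exactly this continuity statement to pass to the completion.
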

\noindent
All functions $f$ in $\w{\bfS_{\homo}(\II_A)}$ are integrable functions, and their integrals are written as $\displaystyle (\scrA^1_{\homo})\int_{\II_A} f \dd\mu_{\II_A}$ in this paper. The reason why we need to prove (a), (b), and (c) in the above theorem is due to the axiomatic definition of the Daniell integral given in \cite{Royden1988}. Thus, we answered Question \ref{Quest:1} \ref{Q1} and \ref{Q2} by the above theorem.
Combine Theorems \ref{thmB} and \ref{thmC}, we have answered \ref{Quest:1} \ref{Q3} by using the uniqueness of $\w{T}$.
In Section \ref{sect:app-approx}, we provide a categorical description of the Stone--Weierstrass Approximation Theorem, see Corollary \ref{coro:SWThm}. Finally, we consider some examples in Section \ref{sect:exp}.

\section{Preliminaries} \label{sect:Perlim}

We recall some concepts about tensor rings in this section. These concepts can be found in references \cite[Section 2.1]{BTLS2023}, and which all originate from \cite[Section 7.1]{Gab2006Repr} (or refer to \cite{Gab1972,Gab1973}), \cite[Section 10]{DR1975}, \cite[Section 1B]{Gre1975}, \cite[Section 2]{R1976}, \cite[Section 2]{Sim1979}, \cite[Section 2]{BTCB2024}, \cite[Sections 2 and 3]{GLF2016} and \cite[Section 2]{Geu2017}.

\subsection{Weight quivers and tensor rings}

First, we recall the definitions of weight quiver and tensor ring given in \cite{LFZ2016, GLF2016, GLF2020}.

\begin{definition}[Weight quivers and modulations]  \rm \label{def:weightquiver} ~
\begin{itemize}
  \item[(1)] \cite[Definition 2.2]{LFZ2016} A {\defines weight quiver} is a pair $(\Q,\vecd)$ given by a quiver and a $\NN_+$-vector $\vecd = (d_i)_{i\in \Q_0} \in \NN_+^{\times \Q_0}$. Here, $\vecd$ is called a {\defines wight} of $(\Q,\vecd)$.

  \item[(2)] \cite[Remark 4.1]{LFZ2016} Let $\FF$ be a field, an {\defines $\FF$-modulation} of a weight quiver $(\Q,\vecd)$ is a pair
$((D_i)_{i\in \Q_0}, (A_{\alpha})_{\alpha\in\Q_1})$ given by two sequences $(D_i)_{i\in \Q_0}$ and $(A_{\alpha})_{\alpha\in\Q_1})$, where
  \begin{enumerate}[label=(2.\arabic*)]
    \item each $D_i$ is a finite-dimensional division $\FF$-algebra with $\dim_{\FF}D_i = d_i$;
      \label{(2.1)}
    \item $A_{\alpha}$ is a $(D_{\fcts(\alpha)},D_{\fctt(\alpha)})$-bimodule, i.e., is both a left $D_{\fcts(\alpha)}$-module and a right $D_{\fctt(\alpha)}$-module;
      \label{(2.2)}
    \item and the action of $\FF$ on $A_{\alpha}$ is central
      (i.e., $\forall f\in \FF$ and $x\in  A_{\alpha}$, $f.x=x.f$).
      \label{(2.3)}
  \end{enumerate}
\end{itemize}
\end{definition}

\begin{definition}[Tensor rings] \rm \label{def:tensorring}
Let $R$ be a ring with identity and $A = {_RA_R}$ be an $(R,R)$-bimodule.
\begin{itemize}
  \item[(1)] Recall that a {\defines tensor ring} is the direct sum
\[ R \la A\ra := \bigoplus_{n\>=0} A^{\otimes_R n} = R \oplus A \oplus (A\otimes_R A) \oplus (A\otimes_R A \otimes_R A) \oplus \cdots \]
whose multiplication is defined by the natural $R$-balanced map
\[ A^{\otimes_R m} \times A^{\otimes_R n} \to A^{\otimes_R (m+n)}, (x,y) \mapsto x\otimes y. \]
  \item[(2)] Furthermore, a {\defines complete tensor ring} is the direct sum
\[ R \lala A\rara
 := \prod_{n\>=1} A^{\otimes_R n}
= \mathop{\underleftarrow{\lim}}\limits_{l\in\NN}
  \bigg(
    R\langle A\rangle\bigg/\bigoplus_{n\>= l}
      A^{\otimes_R n}
  \bigg). \]
\end{itemize}
\end{definition}

\begin{remark} \rm
Tensor rings are called the path algebras of $(\Q,\vecd,\vecg)$ in \cite[Definition 4.2]{LFZ2016}, \cite[Definition 3.5]{GLF2016} and \cite{GLF2020}.
\end{remark}

The following shows that each algebra $\kk\Q/\I$ given by a bound quiver $(\Q,\I)$ is a tensor ring.
Here, $\I$ is an ideal of the path algebra $\kk\Q$ of the quiver $\Q$.

Assume $\Q=(\Q_0,\Q_1,\fcts,\fctt)$, where $\Q_0$ and $\Q_1$ respectively are vertex set and arrow set
and $\fcts$ and $\fctt$ are functions $\Q_1\to\Q_0$ respectively send each arrow to its starting point and ending point.
We define the multiplication of two paths $\wp_1$ and $\wp_2$ is the composition $\wp_1\wp_2$
if $\fctt(\wp_1)=\fcts(\wp_2)$, cf. \cite[Chap II]{ASS2006}.
Then for a field $\kk$, we have $\kk\Q$ is a tensor ring by the isomorphism
\[\kk\Q \cong \bigoplus_{n\>=0} (\kk\Q_1)^{\otimes_R n} = R\langle \kk\Q_1\rangle. \]
Here, $R = \spa_{\kk}(\Q_0) = \prod_{v\in\Q_0}\kk\e_v$
($\e_v$ is the path of length zero corresponded by the vertex $v$),
$\kk\Q_1$ is the $\kk$-vector space generated by the set $\Q_1$,
and $(\kk\Q_1)^{\otimes_R n}\cong\kk\Q_n$ is isomorphic to
the $\kk$-vector space $\kk\Q_n$ generated by all paths of length $n$~
\footnote{In particular, $(\kk\Q_1)^{\otimes_R 0} = \kk\Q_0$,
and $\Q_n$ is the set of all path of length $n$. }.
The natural $\kk$-balanced map
\[ (\kk\Q_1)^{\otimes_R m} \times (\kk\Q_1)^{\otimes_R n} \to (\kk\Q_1)^{\otimes_R (m+n)} \]
is given by the multiplication
\[ \Q_m \times \Q_n \mapsto \Q_{m+n},
(\wp_1,\wp_2) \mapsto
\begin{cases}
  \wp_1\wp_2, & \fctt(\wp_1)=\fcts(\wp_2); \\
  0, & \fctt(\wp_1)\ne \fcts(\wp_2)
\end{cases} \]
of paths on a quiver.
Furthermore, one can check that each quiver algebra
\[ \kk\Q/\I = \bigoplus_{n\>=0} (\kk\Q_1+\I)^{\otimes_R n} = R\langle \kk\Q_1+\I \rangle \]
is a tensor ring. In this sense, each path $\wp=a_1a_2\cdots a_{\ell}$
($a_1$, $a_2$, $\ldots$, $a_{\ell}$ $\in \Q_1$) of length $\ell$
is an element $a_1\otimes a_2 \otimes \cdots \otimes a_{\ell}$
in the $\kk$-module $(\Q_1+\I)^{\otimes_R \ell}$.
If $\alg=\kk\Q/\I$ is a {\defines finite-dimensional algebra}, i.e., the dimension $\dim_{\kk}\alg$ of $\alg$ is finite,
then there exists $N\in\NN$ such that $(\kk\Q_1+\I)^{\otimes_R n}=0$ holds for all $n\>=N$.
Thus, $\alg$ is a complete tensor ring since
\[ R\la \kk\Q_1+\I \ra
= \bigoplus_{n\>=0} (\kk\Q_1+\I)^{\otimes_R n}
\cong \prod_{n\>=0} (\kk\Q_1+\I)^{\otimes_R n}
= R\lala\kk\Q_1+\I \rara \]
holds.

Now, we provide some examples for weight quivers and modulations.
Each quiver $\Q=(\Q_0,\Q_1,\fcts,\fctt)$ can be seen as a {\defines trivial} weight quiver $(\Q,\vecd)$ with $\vecd=(1,\cdots,1)$,
and then the path algebra $\kk\Q$ is isomorphic to $R\la\kk\Q_1\ra$ which provides a $\kk$-modulation
$((\kk\e_v)_{v\in \Q_0}, (\kk\alpha)_{\alpha\in \Q_1})$ of $\Q$. Here,
$R=\prod_{i\in\Q_0} \kk\e_{i}$; and, for each arrow $\alpha\in \Q_1$,
it is clear that $A_{\alpha} = \kk\alpha$ is a $(\kk\e_{\fcts(\alpha)}, \kk\e_{\fctt(\alpha)})$-bimodule.

\begin{example} \rm \label{exp:modulation w=0}
For example, let $\alg=\kk\Q$ be a $\kk$-algebra over an algebraically closed field $\kk$
given by the quiver $\Q = \xymatrix{1 \ar[r]^a & 2 \ar[r]^b & 3}$,
and $R$ be the ring $\kk\Q_0$ which is isomorphic to a semi-simple algebra $\kk^{\times \Q_0}$.
Then $(\kk\Q_1)^{\otimes_R \>=3} =0$, and so we obtain
\begin{align*}
  \alg & = (\kk\e_1+\kk\e_2+\kk\e_3) \oplus (\kk a+\kk b) \oplus \kk a b\\
& \cong  R\la\kk\Q_1\ra = R \oplus \kk\Q_1 \oplus (\kk\Q_1)^{\otimes_R 2},
\end{align*}
where $R = \kk\e_1+\kk\e_2+\kk\e_3$, $A=\kk\Q_1=\kk a+\kk b$, and $A^{\otimes_R 2} = (\kk a+\kk b) \otimes_R (\kk a+\kk b) = \kk a\otimes b \cong \kk ab$ $=\kk\Q_2 \cong \kk\Q_1\otimes_R\kk\Q_1$.
All $\kk$-vector spaces $\kk\e_1$, $\kk\e_2$, $\kk\e_3$ are division $\kk$-algebra.
The $\kk$-vector space $\kk a$ is a $(\kk\e_1,\kk\e_2)$-bimodule whose left $\kk\e_1$-action
is given by $(k_{\e_1}\e_1, k_aa)\mapsto k_{\e_1}k_a \e_1\otimes a =  k_{\e_1}k_aa$
since the tensor $\e_1\otimes a$ is defined as the multiplication of paths $\e_1\otimes a := \e_1a=a$,
and whose right $\kk\e_2$-action is induced by $a\e_2=a$ by a dual way.
Similarly, $\kk b$ is a $(\kk\e_2,\kk\e_3)$-bimodule, and $\kk ab \cong \kk(a\otimes b)$ is a $(\kk\e_1,\kk\e_3)$-bimodule.
Thus, we obtain a $\kk$-modulation $((\kk\e_1, \kk\e_2, \kk\e_3), (\kk a, \kk b))$ of $\Q$
which can be written as
\[ \xymatrix{D_1 \ar[r]^{A_a} & D_2 \ar[r]^{A_b} & D_3} =
\xymatrix{\kk\e_1 \ar[r]^{\kk a} & \kk\e_2 \ar[r]^{\kk b} & \kk e_3}.  \]
The above $\kk$-modulation describes $\kk\Q$.
\end{example}

Next, we provide an example for a modulation of a non-trivial weight quiver.
For any weight quiver $(\Q,\vecd)$ with $\vecd=(d_i)_{i\in\Q_0}$,
let $\FF$ be a base field, $\EE$ is an extension field of $\FF$ with $[\EE:\FF]=d:=\mathrm{lcm}(d_i\mid i\in \Q_0)$,
and, for any $i\in\Q_0$, $\FF_i$ be an extension field of $\FF$ such that $\FF\subseteq\FF_i\subseteq \EE$ and $[\FF_i:\FF]=d_i$ hold.
Then for any element $\vecg=(g_{\alpha})_{\alpha\in\Q_1}$ in the Cartesian product
$\prod_{\alpha\in\Q_1} \Gal(\FF_{\fcts(\alpha)} \cap \FF_{\fctt(\alpha)} / \FF)$
of Galois groups, define
\begin{center}
   $\displaystyle R = \prod_{i\in\Q_0} \FF_i\e_i$,
   $\displaystyle A_{\alpha} = \FF_{\fcts(\alpha)}\otimes_{\FF_{\fcts(\alpha)}\cap\FF_{\fctt(\alpha)}}
  \FF_{\fctt(\alpha)}^{g_{\alpha}}$,
and $\displaystyle A = \bigoplus_{\alpha\in \Q_1} A_{\alpha}$,
\end{center}
where $\FF_{\fctt(\alpha)}^{g_{\alpha}}$ is the field $\FF_{\fctt(\alpha)}$ with
the right $\FF_{\fctt(\alpha)}$-action
\[ \FF_{\fctt(\alpha)}^{g_{\alpha}} \times \FF_{\fctt(\alpha)}
   \to \FF_{\fctt(\alpha)}^{g_{\alpha}},
   (x,z) \mapsto xz \]
given by the multiplication in field $\FF_{\fctt(\alpha)}$
and the left $\FF_{\fcts(\alpha)}\cap \FF_{\fctt(\alpha)}$-action
\[(\FF_{\fcts(\alpha)}\cap \FF_{\fctt(\alpha)}) \times \FF_{\fctt(\alpha)}^{g_{\alpha}}
   \to \FF_{\fctt(\alpha)}^{g_{\alpha}},
   (z,x) \mapsto g_a(z)x \]
given by $g_a \in \Gal(\FF_{\fcts(\alpha)}\cap\FF_{\fctt(\alpha)}/\FF)$. Then
\begin{center}
  $((\FF_i)_{i\in\Q_0}, (A_{\alpha})_{\alpha\in\Q_1})$,
   written as
 $( \xymatrix{ \FF_{\fcts(\alpha)}
     \ar[r]^{A_{\alpha}}
   & \FF_{\fctt(\alpha)} }
   )_{\alpha\in\Q_1}$ for clarity,
\end{center}
is an $\FF$-modulation of $(\Q,\vecd)$ corresponded by the tensor ring
\begin{center}
  $\alg(\Q,\vecd,\vecg, \EE/\FF, (\FF_i/\FF)_{i\in\Q_0})) := R\la A\ra$.
\end{center}
We call $\vecg$ as above a {\defines modulation function}.
In particular, for any two arrow $a$ and $b$ with $\fctt(a)=\fcts(b)$ and any
$\lambda\in\bigcap_i \FF_i \subseteq \EE$, we have
$a \otimes b \in A_a \otimes_{\FF_{\fctt(a)}\cap\FF_{\fcts(b)}} A_b$
and, by \ref{(2.3)}, have
\[ (a\otimes b)\lambda = a\otimes (b\lambda) = a\otimes (g_b(\lambda) b)
= (ag_b(\lambda))\otimes b = (g_a(g_b(\lambda))a)\otimes b. \]
It follows that if $\lambda \in \FF \subseteq \bigcap_i \FF_i$, then
\[ (a\otimes b)\lambda = a \otimes (b\lambda) = a\otimes (\lambda b)
= (a\lambda) \otimes b = (\lambda a) \otimes b) = \lambda(a\otimes b). \]
Thus, the tensor ring $R\la A\ra$ is an $\FF$-algebra.

\begin{example} \rm \label{exp:modulation w ne 0}
Take $(\Q,\vecd)$ is a weight quiver given by $\Q = $
\begin{figure}[H]
\centering
\begin{tikzpicture}[scale = 0.75]
\draw (-1.73,-1) node{2} (0,2) node{1} (1.73,-1) node{3};
\draw [rotate=  0][->] (-1.43,-1) -- (1.43,-1);
\draw [rotate=120][->] (-1.43,-1) -- (1.43,-1);
\draw [rotate=240][->] (-1.43,-1) -- (1.43,-1);
\draw [rotate=30+120] (1.3,0) node{$a$};
\draw [rotate=30+240] (1.3,0) node{$b$};
\draw [rotate=30+360] (1.3,0) node{$c$};
\end{tikzpicture}
\end{figure}
\noindent
and $\vecd=(2,2,1)$,
and $\EE$ and $\FF$ are two fields with $[\EE:\FF]=2$.
Let $\FF_1=\EE$, $\FF_2=\EE$, and $\FF_3=\FF$, then it is clearly that $\FF_1$, $\FF_2$ and $\FF_3$ are three finite-dimensional division $\FF$-algebras corresponded by the vertices $1$, $2$, and $3$, respectively.
For an arbitrary modulation function
\begin{center}
  $\vecg = \displaystyle (g_{\alpha})_{\alpha\in\Q_1} \in
  \prod_{\alpha\in\Q_1} \Gal(\FF_{\fcts(\alpha)}\cap\FF_{\fctt(\alpha)}/\FF)$,
\end{center}
we define
\begin{itemize}
  \item $R = \FF_1 \times \FF_2 \times \FF_3$;
  \item $A=A_a\oplus A_b\oplus A_c$ is an $(R,R)$-bimodule, where:
\begin{itemize}
  \item $A_a := \FF_1 \otimes_{\FF_1\cap\FF_2} \FF_2^{g_a}$ is a $(\FF_1,\FF_2)$-bimodule,
  \item $A_b := \FF_2 \otimes_{\FF_2\cap\FF_3} \FF_3^{g_b}$ is a $(\FF_2,\FF_3)$-bimodule,
  \item $A_c := \FF_3 \otimes_{\FF_3\cap\FF_1} \FF_1^{g_c}$ is a $(\FF_3,\FF_1)$-bimodule.
\end{itemize}
\end{itemize}
Then
\[ \alg(\Q,\vecd,\vecg,\EE,(\FF_i)_{i\in\Q_0})
 := R \oplus (A_a\oplus A_b\oplus A_c) \oplus (A_{ab}\oplus A_{bc}\oplus A_{ca}) \oplus \cdots \]
is a tensor ring. Here, $A_{ab} = A_a \otimes_{\FF_2} A_b$ is a $(\FF_1, \FF_3)$-bimodule by the following fact
\begin{align*}
    A_{ab}
& = A_a \otimes_{\FF_2} A_b \\
& = (\FF_1 \otimes_{\FF_1\cap\FF_2} (\FF_2^{g_a})_{\FF_2})
    \otimes_{\FF_2}
    (\FF_2 \otimes_{\FF_2\cap\FF_3} (\FF_3^{g_b})_{\FF_3}) \\
& = {_{\FF_1}}(\FF_1 \otimes_{\FF_1\cap\FF_2} \FF_2^{g_a} \otimes_{\FF_2\cap\FF_3} \FF_3^{g_b})_{\FF_3}.
\end{align*}
Simlarly, one can check that $A_{ab}$, $A_{bc}$, $A_{abc}$, $\ldots$ are bimodules,
and we can obtain an $\FF$-modulation of $(\Q,\vecd)$ by $\vecg=(g_a,g_b,g_c) \in \Gal(\FF_1\cap\FF_2/\FF) \times \Gal(\FF_2\cap\FF_3/\FF) \times \Gal(\FF_3\cap\FF_1/\FF)$ as follows.
\begin{figure}[H]
\centering
\begin{tikzpicture} 
\draw (-1.73,-1) node{$\FF_2$} (0,2) node{$\FF_1$} (1.73,-1) node{$\FF_3$};
\draw [rotate=  0][->] (-1.43,-1) -- (1.43,-1);
\draw [rotate=120][->] (-1.43,-1) -- (1.43,-1);
\draw [rotate=240][->] (-1.43,-1) -- (1.43,-1);
\draw [rotate= 90](2.5,0) node{$d_1=2$};
\draw [rotate=210](2.5,0) node{\rotatebox{120}{$d_2=2$}};
\draw [rotate=330](2.5,0) node{\rotatebox{240}{$d_3=1$}};
\draw [rotate=30+120] (0.7,0) node{\rotatebox{ 60}{$A_a$}};
\draw [rotate=30+120] (1.3,0) node{\rotatebox{ 60}{$\FF_1 \otimes_{\FF_1\cap\FF_2} \FF_2^{g_a}$}};
\draw [rotate=30+  0] (0.7,0) node{\rotatebox{-60}{$A_c$}};
\draw [rotate=30+  0] (1.3,0) node{\rotatebox{-60}{$\FF_3 \otimes_{\FF_3\cap\FF_1} \FF_1^{g_c}$}};
\draw [rotate=30+240] (0.7,0) node{\rotatebox{  0}{$A_b$}};
\draw [rotate=30+240] (1.3,0) node{\rotatebox{  0}{$\FF_2 \otimes_{\FF_2\cap\FF_3} \FF_3^{g_b}$}};
\end{tikzpicture}
\end{figure}
\noindent
\end{example}

\subsection{Representations of weight quivers}

Let $\alg=\alg(\Q,\vecd,\vecg,\EE,(\FF_i)_{i\in \Q_0})$ and $\alg'=\alg(\Q',\vecd',\vecg',\EE',(\FF_i')_{i\in \Q_0})$ be two tensor rings. Then an {\defines algebraical homomorphism} (=homomorphism for simplicity)
between $\alg$ and $\alg'$ is a homomorphism
\[ h: \alg \to \alg' \]
of Abel groups such that
\begin{itemize}
  \item $h(a\otimes b) = h(a)\otimes h(b)$ holds for all arrows $a,b\in \Q_1$.
  \item $h(\lambda a) = \lambda h(a)$ holds for all $a\in\Q_1$ and $\lambda\in\FF$.
\end{itemize}
Then for any finite-dimensional $\FF$-vector space $V$ with $\dim_{\FF}V = n$, its endomorphism $\End_{\FF}V \cong \Mat_{n\times n}(\FF)$ is a ring which can be seen as a tensor ring
\[ \End_{\FF}V \cong R\la A \ra,  \]
where
\begin{itemize}
  \item $R = \displaystyle \prod_{1\=<i\=<n}\FF\bfE_{ii}$ (for each $1\=<i,j\=<$, $\bfE_{ij}$ is the $n\times n$ matrix whose element in the $i$-th row and $j$-th column is $1$, and the other elements are $0$);
  \item $A = \displaystyle \bigoplus_{1< i \=<n} \bfE_{i,i-1} \oplus \bigoplus_{1\=< j <n} \bfE_{j,j+1}$,
  \item and $\bfE_{ij}\otimes \bfE_{i'j'} := \bfE_{ij}\bfE_{i'j'}$.
\end{itemize}
Thus, for any $\alg=\alg(\Q,\vecd,\vecg,\EE,(\FF_i)_{i\in \Q_0})$, each algebraical homomorphism
\begin{center}
  $h: \alg \to (\End_{\FF}V)^{\op}, ~ r\mapsto h_r$
\end{center}
induces a right $\alg$-action
\begin{center}
  $V\times \alg \to V, ~ (v,r)\mapsto v.r := h_r(v)$
\end{center}
such that the following five facts
hold for all $v, v_1, v_2\in V$, $r, r_1,r_2\in \alg$, and $\lambda\in\FF$:
\begin{enumerate}[label=(M\arabic*)]
  \item 
    $v.(r_1+r_2) = v.r_1+v.r_2$;
    \label{M1}
  \item 
    $(v_1+v_2).r = v_1.r+v_2.r$; \label{M2}
  \item 
    $m.(r_1r_2) = (m.r_1).r_2$; \label{M3}
  \item
    $m.1_{\alg}=m$ ($1_{\alg}$ is the identity of $\alg$); \label{M4}
  \item
    $m.(r\lambda) = (m.r)\lambda = (m\lambda).r$. \label{M5}
\end{enumerate}
Dually, each algebraic homomorphism
\begin{center}
  $h: \alg \to \End_{\FF}V, ~ r\mapsto h_r$
\end{center}
induces a left $\alg$-action
\begin{center}
  $\alg\times V \to V, (v,r)\mapsto r.v := h_r(v)$
\end{center}
such that the following five facts
hold for all $v, v_1, v_2\in V$, $r, r_1,r_2\in \alg$, and $\lambda\in\FF$:
\begin{enumerate}[label=(\arabic*M)]
  \item
    $(r_1+r_2).v = r_1.v+r_2.v$;
    \label{1M}
  \item
    $r.(v_1+v_2) = r.v_1+r.v_2$; \label{2M}
  \item
    $(r_1r_2).m = r_1.(r_2.m)$; \label{3M}
  \item
    $1_{\alg}.m=m$; \label{4M}
  \item
    $(\lambda r).m = \lambda(r.m) = r.(\lambda m)$. \label{5M}
\end{enumerate}

\begin{definition} \rm
Let $\alg = \alg(\Q,\vecd,\vecg,\EE,(\FF_i)_{i\in \Q_0})$.
\begin{itemize}
  \item[(1)]
    A {\defines right $\alg$-module} (or {\defines right $\alg$-representation}) is an $\FF$-vector space $V$
with a right $\alg$-action $V \times \alg \to V$ such that the conditions \ref{M1}--\ref{M5} hold.
  \item[(2)]
    A {\defines left $\alg$-module} (or {\defines left $\alg$-representation}) is an $\FF$-vector space $V$
with a left $\alg$-action $\alg\times V \to V$ such that the conditions \ref{1M}--\ref{5M} hold.
\end{itemize}
\end{definition}

Each right $\alg$-module $M=M_{\alg}$ has a decomposition
\[ M = M 1_{\alg} = M \sum_{i\in\Q_0} \e_i \cong \bigoplus_{i\in\Q_0} M\e_i \]
such that for any path $\wp = a_1\cdots a_l$, we have
\begin{align*}
 M \e_{\fcts(a_1)} \cdot \wp
& = M \e_{\fcts(a_1)}
  \otimes_{\FF_{\fcts(a_1)}} A_{a_1}
  \otimes_{\FF_{\fctt(a_1)}\cap\FF_{\fcts(a_2)}} A_{a_2}
  \otimes_{\FF_{\fctt(a_2)}\cap\FF_{\fcts(a_3)}} \cdots
  \otimes_{\FF_{\fctt(a_{l-1})}\cap\FF_{\fcts(a_l)}} A_{a_l} \\
& = M \e_{\fcts(a_1)} \otimes_{\FF_{\fcts(a_1)}}
  \bigg(
    \bigotimes_{i=1}^l \FF_{\fcts(a_i)} \otimes_{\FF_{\fcts(a_i)} \cap \FF_{\fctt(a_i)}} \FF_{\fctt(a_i)}^{g_{a_i}}
  \bigg) \otimes_{\FF_{\fctt(a_l)}} \FF_{\fctt(a_l)}\e_{\fctt(a_l)} \\
& \subseteq  M \e_{\fctt(a_l)}.
\end{align*}
It follows that each $M$ can be corresponded to a sequence
\begin{align}\label{repr 1}
  (M\e_i, \varphi_{\alpha})_{i\in\Q_0, \alpha \in \Q_1}
\end{align}
given by $\FF_i$-vector spaces $(M\e_i)_{i\in\Q_0}$ and $\FF$-linear maps
$(\varphi_{\alpha}: M\e_{\fcts(\alpha)} \otimes_{\FF_{\fcts(\alpha)}}
  A_{\alpha} \to M\e_{\fctt(\alpha)})_{\alpha\in\Q_1}$.
Conversely, for right $\FF_i$-modules $(M_i)_{i\in\Q_0}$ and $\FF$-linear maps
$(M_{\alpha}: M_{\fcts(\alpha)} \otimes_{\FF_{\fcts(\alpha)}} A_{\alpha} \to M_{\fctt(\alpha)})_{\alpha\in\Q_1}$, we obtain a sequences
\begin{align}\label{repr 2}
  (M_i, M_{\alpha})_{i\in\Q_0, \alpha \in \Q_1}
\end{align}
which induces a right $\alg$-module $\displaystyle M := \bigoplus_{i\in \Q_0} M_i$
with the right $\alg$-action $M\times \alg \to M$ sending each
$(m_{\fcts(\alpha)}, a)$ in $ M_{\fcts(\alpha)} \times A_{\alpha}$ $(\subseteq M\times \alg)$
to the element $m_{\fcts(\alpha)} \otimes a$ in the tensor $M_{\fcts(\alpha)} \otimes_{\FF_{\fcts(\alpha)}} A_{\alpha}$
($\subseteq M_{\fctt(\alpha)} \subseteq M$). The sequence given in (\ref{repr 1}) or (\ref{repr 2}) is called
a {\defines right quiver representation} of $(\Q,\vecd,\vecg,\EE,(\FF_i)_{i\in \Q_0})$.

Now, let $(\Q,\vecd,\vecg,\EE,(\FF_i)_{i\in \Q_0})_{\repcat}$ be the category whose objects are right quiver representations of $\alg$
and, for any objects $M=(M_i, M_{\alpha})_{i\in\Q_0, \alpha \in \Q_1} $
and $N = (N_i, N_{\alpha})_{i\in\Q_0, \alpha \in \Q_1}$,
each morphism $h: M \to N$ is a family of $\FF$-linear maps $(h_i:M_i\to N_i)_{i\in\Q_0}$ such that
each $h_i$ is a right $\FF_i$-homomorphism and the diagram
\[
\xymatrix{
   M_i \otimes_{\FF_i} A_{\alpha}
   \ar[rr]^{M_{\alpha}}
   \ar[d]^{h_i\otimes 1_{A_{\alpha}}}
&& M_j
   \ar[d]^{h_j}
\\
   N_i \otimes_{\FF_i} A_{\alpha}
   \ar[rr]^{N_{\alpha}}
&& M_j
}
\]
commutes, where $\alpha$ is an arbitrary arrow from $i$ to $j$. Then $(\Q,\vecd,\vecg,\EE,(\FF_i)_{i\in \Q_0})_{\repcat}$ describes the finite-dimensional right $\alg$-module category $\modcat_{\alg}$.

Dually, each left $\alg$-module $M={_\alg}M$ has a decomposition
$\displaystyle M = \bigoplus_{i\in\Q_0} \e_i M$ such that for any $\wp = a_1\cdots a_l$,
we have $\wp\e_{\fctt(a_l)}M \subseteq \e_{\fcts(a_1)}M$.
It follows that each $M$ can be It follows that each $M$ can be corresponded by a sequence
$(\e_i M, \varphi_{\alpha})_{i\in\Q_0, \alpha \in \Q_1}$ given by $\FF_i$-vector spaces $(\e_iM)_{i\in\Q_0}$
and $\FF$-linear maps $(\varphi_{\alpha}: A_{\alpha}
\otimes_{\FF_{\fctt(\alpha)}}\e_{\fctt(\alpha)} M
\to \e_{\fcts(\alpha)}M)_{\alpha\in\Q_1}$.
Conversely, for left $\FF_i$-modules $(M_i)_{i\in\Q_0}$ and $\FF$-linear maps
$(M_{\alpha}:  A_{\alpha} \otimes_{\FF_{\fctt(\alpha)}} M_{\fctt(\alpha)}
 \to M_{\fcts(\alpha)})_{\alpha\in\Q_1}$, we obtain a sequences
$(M_i, M_{\alpha})_{i\in\Q_0, \alpha \in \Q_1}$
which induces a left $\alg$-module $\displaystyle M := \bigoplus_{i\in \Q_0} M_i$ by a dual way.
The sequence $(\e_i M, \varphi_{\alpha})_{i\in\Q_0, \alpha \in \Q_1}$ or
$(M_i, M_{\alpha})_{i\in\Q_0, \alpha \in \Q_1}$ are called a {\defines left quiver representation} of $(\Q,\vecd,\vecg,\EE,(\FF_i)_{i\in \Q_0})$.
Furthermore, let $(\Q,\vecd,\vecg,\EE,(\FF_i)_{i\in \Q_0})_{\Repcat}$ be the category whose objects are left quiver representations of $\alg$
and, for any objects $M=(M_i, M_{\alpha})_{i\in\Q_0, \alpha \in \Q_1} $
and $N = (N_i, N_{\alpha})_{i\in\Q_0, \alpha \in \Q_1}$,
each morphism $h: M \to N$ is a family of $\FF$-linear maps $(h_i:M_i\to N_i)_{i\in\Q_0}$ such that
each $h_i$ is a left $\FF_i$-homomorphism and the diagram
\[
\xymatrix{
  A_{\alpha} \otimes_{\FF_i}  M_i
   \ar[rr]^{M_{\alpha}}
   \ar[d]^{1_{A_{\alpha}} \otimes h_i }
&& M_j
   \ar[d]^{h_j}
\\
  A_{\alpha} \otimes_{\FF_i}  N_i
   \ar[rr]^{N_{\alpha}}
&& M_j
}
\]
commutes, where $\alpha$ is an arbitrary arrow from $j$ to $i$.
Then ${_{\repcat}}\alg$ describes ${_{\alg}}\modcat$ and $\alg_{\repcat}$ describes $\modcat_{\alg}$. To be more precise, we have the following theorem.

\begin{theorem}
Let $\alg$ be a tensor ring $\alg(\Q,\vecd,\vecg,\EE,(\FF_i)_{i\in \Q_0})$ of a weight quiver $(\Q,\vecd)$.
Then there exists an $\FF$-equivalence of categories
\[ \Modcat_{\alg} \mathop{\longrightarrow}\limits^{\simeq} (\Q,\vecd,\vecg,\EE,(\FF_i)_{i\in \Q_0})_{\Repcat} \]
which sends each right $\alg$-module $M$ to the quiver representation $(M_i, \varphi_{\alpha})_{i\in \Q_0, \alpha \in \Q_1}$ decided by $\varphi_{\alpha}:M\e_i \otimes_{\FF_i} A_{\alpha} \to M\e_j$.
One can obtain a dual result
\[ {_\alg}\Modcat \mathop{\longrightarrow}\limits^{\simeq} {_{\Repcat}}(\Q,\vecd,\vecg,\EE,(\FF_i)_{i\in \Q_0}) \]
which describe the finite-dimensional left $\alg$-module category ${_\alg}\modcat$ by a similar way.
\end{theorem}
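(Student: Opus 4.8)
The plan is to build an explicit pair of mutually quasi-inverse $\FF$-functors, using the decomposition $\alg = R\langle A\rangle = \bigoplus_{n\geqslant 0} A^{\otimes_R n}$ recorded above, and then to check that the two composites are canonically isomorphic to the respective identity functors. First I would define $F\colon \Modcat_{\alg}\to(\Q,\vecd,\vecg,\EE,(\FF_i)_{i\in\Q_0})_{\Repcat}$ on objects by sending a right $\alg$-module $M$ to the family $(M\e_i)_{i\in\Q_0}$ — each $M\e_i$ being a right $\FF_i$-module through $\FF_i\cong\FF_i\e_i\subseteq\alg$ — together with the maps $\varphi_\alpha\colon M\e_{\fcts(\alpha)}\otimes_{\FF_{\fcts(\alpha)}}A_\alpha\to M\e_{\fctt(\alpha)}$, $m\otimes a\mapsto m.a$. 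Here one checks that $\varphi_\alpha$ is well defined on the tensor product (the $\FF_{\fcts(\alpha)}$-balancing identity $(m.f).a=m.(f.a)$ is \ref{M3} after identifying the left $\FF_{\fcts(\alpha)}$-action on $A_\alpha$ with left multiplication by $\FF_{\fcts(\alpha)}\e_{\fcts(\alpha)}$ in $\alg$), that $m.a$ lies in $M\e_{\fctt(\alpha)}$ since $a=a\e_{\fctt(\alpha)}$ in $A_\alpha$, and that $\varphi_\alpha$ is $\FF$-linear by \ref{M5} and the centrality condition \ref{(2.3)}. On morphisms, $F$ sends an $\alg$-homomorphism $h\colon M\to N$ to $(h\e_i\colon M\e_i\to N\e_i)_{i\in\Q_0}$; each $h\e_i$ is a right $\FF_i$-homomorphism and the defining square of a morphism of representations commutes, both because $h$ intertwines the $\alg$-actions. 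Functoriality and $\FF$-linearity of $F$ are then immediate, all structure maps being $\FF$-linear.

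Next I would construct the quasi-inverse $G$ in the reverse direction. On objects this is exactly the construction already spelled out in the paragraph preceding the theorem: from a representation $(M_i,M_\alpha)_{i\in\Q_0,\alpha\in\Q_1}$ one forms $M:=\bigoplus_{i\in\Q_0}M_i$ and defines the right $\alg$-action on the summand $A_{a_1}\otimes_{\FF_{\fctt(a_1)}\cap\FF_{\fcts(a_2)}}\cdots\otimes A_{a_n}$ of $A^{\otimes_R n}$ indexed by a path $\wp=a_1\cdots a_n$ via the iterated composite $M_{a_n}\circ(M_{a_{n-1}}\otimes 1)\circ\cdots\circ(M_{a_1}\otimes 1)$, extended additively, with each $\e_i$ acting as the projection onto $M_i$. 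One must check that this composite is well defined modulo all the intermediate balancings $\otimes_{\FF_{\fctt(a_k)}\cap\FF_{\fcts(a_{k+1})}}$, which rests on the bimodule structure of the $A_\alpha$ in \ref{(2.1)}--\ref{(2.3)} and on associativity of the tensor product, and that \ref{M1}--\ref{M5} hold; among these, \ref{M3} is precisely the statement that concatenation of paths corresponds to composition of the $M_\alpha$, which is built into the definition, and \ref{M4} holds because $1_\alg=\sum_{i\in\Q_0}\e_i$. On morphisms, $G$ sends a morphism $(h_i)_{i\in\Q_0}$ of representations to $\bigoplus_{i\in\Q_0}h_i$, which is an $\alg$-homomorphism because the defining squares make it commute with the action of each arrow and hence of each path; $G$ is again an $\FF$-functor.

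Then I would supply the two natural isomorphisms. For $GF$, the canonical internal decomposition $M=M1_\alg=M\sum_{i\in\Q_0}\e_i=\bigoplus_{i\in\Q_0}M\e_i=GF(M)$ is natural in $M$ and carries the original $\alg$-action to the one reconstructed by $G$, by \ref{M3}. For $FG$, the representation $FG((M_i,M_\alpha))$ has $i$-th vertex space $(\bigoplus_{j\in\Q_0}M_j)\e_i\cong M_i$ with arrow maps $M_\alpha$ by the very construction of $G$, and these isomorphisms are natural. This yields the $\FF$-equivalence $\Modcat_{\alg}\simeq(\Q,\vecd,\vecg,\EE,(\FF_i)_{i\in\Q_0})_{\Repcat}$, which restricts to an equivalence on the full subcategories of finite-dimensional objects. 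The dual equivalence ${_\alg}\Modcat\simeq{_{\Repcat}}(\Q,\vecd,\vecg,\EE,(\FF_i)_{i\in\Q_0})$ is proved verbatim, replacing $M\e_i$ by $\e_iM$, the map $\varphi_\alpha$ by $\varphi_\alpha\colon A_\alpha\otimes_{\FF_{\fctt(\alpha)}}\e_{\fctt(\alpha)}M\to\e_{\fcts(\alpha)}M$, and reversing the order of composition of the $M_\alpha$.

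The only genuinely non-formal point — and hence what I expect to be the main obstacle — is the well-definedness and associativity of the iterated action used to define $G$: one must verify that the composite $M_{a_n}\circ\cdots\circ(M_{a_1}\otimes 1)$ descends correctly through each tensor product over $\FF_{\fctt(a_k)}\cap\FF_{\fcts(a_{k+1})}$, compatibly with the Galois twists $g_\alpha$ occurring in the definition of $A_\alpha$, so that \ref{M3} genuinely holds; this is where the bimodule axioms \ref{(2.1)}--\ref{(2.3)} and the $R$-balancing of the tensor ring are really used. Everything else reduces to bookkeeping that follows from the $\FF$-linearity of the structure maps and the centrality hypothesis \ref{(2.3)}.
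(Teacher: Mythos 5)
Your proposal is correct and follows essentially the same route the paper intends: the paper states this theorem without a formal proof, relying on the immediately preceding constructions (the decomposition $M=M1_{\alg}=\bigoplus_{i\in\Q_0}M\e_i$ with the induced maps $\varphi_\alpha$, and the converse construction of a right $\alg$-module from a sequence $(M_i,M_\alpha)$), and your two functors $F$ and $G$ are exactly these constructions made explicit, together with the routine verification of the natural isomorphisms. Your flagged concern about well-definedness of the iterated action through the intermediate tensor products is a genuine point the paper glosses over (it only requires the $M_\alpha$ to be $\FF$-linear, whereas descent through $\otimes_{\FF_{\fctt(a_k)}}$ needs the appropriate one-sided $\FF_{\fctt(a_k)}$-linearity), but this is a gap in the paper's setup rather than in your argument.
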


\section{Norms} \label{sect:norm}

An algebra with a norm is called a normed algebra. Furthermore, if a normed algebra is complete, then is called a Banach algebra. In this section, we will consider normed tensor rings and normed module over normed tensor rings.

\subsection{Normed tensor rings} \label{sect:norm tens ring}

Assume $\FF$ is a field with a norm $|\cdot|: \FF \to \RR^{\>=0}$.
Let $\alg = \alg(\Q,\vecd,\vecg, \EE, (\FF_i)_{i\in\Q_0}))$ be a tensor ring,
and, as an algebra over $\FF$, we put that its dimension $\dim_{\FF}\alg$ is finite. Then
\[ \alg = \spa_{\FF}(\basis{\alg}) = \bigboxplus_{i=1}^{n} \FF e_i
~ \bigg(\cong \bigoplus_{i=1}^n \FF e_i \bigg) \]
($\basis{\alg} = \{e_i \mid 1\=< i\=< n=\dim_{\FF}\alg\}$ is a basis of $\alg$),
which admits that each element $a\in\alg$ is of the form
\[ a = \sum_{i=1}^{n} f_i e_i, ~ f_i\in\FF. \]
Thus, for any $1\=< p\in\RR^+$ and map $\norm:\basis{\alg}\to\RR^{\>=0}$, the formula
\begin{align}\label{formula:norm on alg}
  \Vert a\Vert_p := \bigg(\sum_{i=1}^n |f_i|^p \norm(e_i)^p\bigg)^{\frac{1}{p}}
\end{align}
admits a finite-dimensional norm $\FF$-vector space $(\alg,\norm,\Vert\cdot\Vert_p)$,
see \cite[Proposition 3.1]{LLHZ2025}.

\begin{definition} \rm
A {\defines normed tensor ring} is a triple $(\alg,\norm,\Vert\cdot\Vert_p)$ (=$\alg$ for short),
where $\norm:\basis{\alg}\to\RR^{\>=0}$ and $\Vert\cdot\Vert_p:\alg\to\RR^{\>=0}$
are called the {\defines normed basis function} and {\defines norm} of $\alg$, respectively.
\end{definition}

Let $A=\alg(\Q_A,\vecd_A,\vecg_A,\FF,(\FF_i)_{i\in(\Q_A)_0})$ and
$B=\alg(\Q_B,\vecd_B,\vecg_B,\FF,(\FF_i)_{i\in(\Q_B)_0})$ respectively be
two tensor rings of weight quivers $\Q_A$ and $\Q_B$
with $\dim_{\FF}A<\infty$ and $\dim_{\FF}B<\infty$,
and $\homo: A \to B$ be a homomorphism of two tensor rings.
Consider the basis of $B$ given by the modulation
\[ (\xymatrix{
\FF_{\fcts(\beta)}
\ar[r]^{B_{\alpha}}
& \FF_{\fctt(\beta)}
})_{\beta\in(\Q_B)_1} \]
of $\Q_B$, where each $B_{\beta} = \FF_{\fcts(\beta)}
       \otimes_{\FF_{\fcts(\beta)}\cap\FF_{\fctt(\beta)}}
    \FF_{\fctt(\beta)}^{g_{B,\beta}}$,
as an $\FF$-vector space, has a finite dimension $\dim_{\FF} B_{\beta} = d_{\beta} < \infty$. Then
\[ B \cong R_B\la \FF(\Q_B)_1\ra = \prod_{n=1}^{L_B} (\FF(\Q_B)_1)^{\otimes_R n} \]
holds for some $L_B \in \NN$, and, in particular, we have
\begin{center}
$\displaystyle \dim_{\FF} B = \sum_{n\>=0} \dim_{\FF}(\FF(\Q_B)_1)^{\otimes_R n}
= \sum_{i\in(\Q_B)_0} d_i
 + \sum_{l=1}^{L_B}
   \sum_{
     \wp=\beta_1\cdots\beta_l\in(\Q_B)_l
     \atop
     (\wp\ne 0)
     }
     d_{\beta_1}\cdots d_{\beta_l}
$
\end{center}
if $\FF_i\cap \FF_j =\FF$ holds for all $i \ne j\in(\Q_B)_0$.
Thus, $B$ has a basis $\basis{B} = \{e_{B,i} \mid 1\=< i \=< \dimB = \dim_{\FF} B \}$,
and for any $1\=<p \in\RR^+$ and $\norm_B: \basis{B} \to \RR^{\>=0}$,
the formula (\ref{formula:norm on alg}) induces the following map
\begin{align}\label{norm B}
  \Vert\cdot\Vert_{B,p}: B \to \RR^{\>=0}, ~
  b=\sum_{i=1}^{\dimB} f_ie_{B,i}
  \mapsto
   \bigg(
     \sum_{i=1}^{\dimB} |f_i|^p\norm(e_{B,i} )^p
   \bigg)^{\frac{1}{p}},
\end{align}
which defines a norm of $B$.

\def\bfvert{\pmb{\vert}}

For a basis $\basis{B}$ of $B$, we know that a map $\norm_B:\basis{B}\to \RR^{\>=0}$
provides a norm $\Vert\cdot\Vert_{B,p}$ defined on $B$ by using (\ref{formula:norm on alg}).
Then, for any homomorphism $\homo: A\to B$, $\homo$ induces a map
\[ \bfvert\cdot\bfvert := \bfvert\cdot\bfvert_{\homo} : A \to \RR^{\>=0}, ~ a \mapsto \Vert \homo(a)\Vert_{B,p} \]
satisfying the following three facts:
\begin{enumerate}[label=(\arabic*)]
  \item $\bfvert a\bfvert \>=0$;
  \item $\bfvert \lambda a\bfvert = \Vert \homo(\lambda a)\Vert_{B,p} = |\lambda |\Vert \homo(a)\Vert_{B,p} = |\lambda |\bfvert a\bfvert$,
    ($\forall \lambda \in\FF, a\in A$);
  \item $\bfvert a_1+a_2\bfvert = \Vert \homo(a_1)+\homo(a_2)\Vert_{B,p}
    \=< \Vert \homo( a_1)\Vert_{B,p}+\Vert \homo(a_2)\Vert_{B,p}
     = \bfvert a_1\bfvert + \bfvert a_2 \bfvert$ ($\forall a_1,a_2\in A$).
\end{enumerate}
Thus, $\homo$ induces a seminorm $\bfvert \cdot\bfvert$ defined on $A$. Here, $A$ is seen as an $\FF$-vector space.
It is easy to prove that $\bfvert a \bfvert = 0$ if and only if $a \in \Ker(\homo)$.
Then $\bfvert\cdot\bfvert$ induces a map $A/\Ker(\homo) \to \RR^{\>=0}$, $a+\Ker(\homo) \mapsto \bfvert a \bfvert$
which is well-defined since $\bfvert a \bfvert$ $-$ $\bfvert k \bfvert$ $=$ $\bfvert a \bfvert$ $\=<$ $\bfvert a + k \bfvert$
$\=<$ $\bfvert a \bfvert + \bfvert k \bfvert$ $= \bfvert a \bfvert$ admits that $\bfvert a +k \bfvert = \bfvert a \bfvert$ holds for all $k\in\Ker(\homo)$.

\subsection{Normed representations}

Let $\tau$ be a homomorphism of $\FF$-algebras $\tau: A \to \FF$
and $|\cdot|: \FF \to \RR^{\>=0}$ be a norm defined on $\FF$.
In \cite[Definition 4.1]{LLHZ2025}, a $\tau$-normed right $A$-module
over a finite-dimensional $\FF$-algebra $A$ is a $\FF$-vector space $M$
with two maps $\Vert\cdot\Vert_M: M \to \RR^{\>=0}$ and $h: A \to \End_A(M)$ such that
\[ \Vert ma \Vert_M = \Vert m \Vert_M |\tau(a)| \]
and
\[ h(a_1a_2) = h(a_1)h(a_2) \]
hold for all $m\in M$ and $a, a_1, a_2\in A$.
Therefore, each $\tau$-normed $A$-module is triple $(M,h,\Vert\cdot\Vert_M)$
of an $\FF$-vector space $M$, a homomorphism $h: A \to \End_A(M)$ of $\FF$-algebras,
and a norm $\Vert\cdot\Vert_M: M \to \RR^{\>=0}$.
One can define $\tau$-normed left $A$-module in a dual way.

Next, we provide a more general definition of a normed module.

\begin{definition}[Normed module] \rm
Let $M={_AM_B}$ be an $(A,B)$-bimodule whose left $A$-action and right $B$-action respectively are
$A\times M \to M$, $(a,m)\mapsto a.m$ and $M\times B \to M$, $(m,b)\mapsto m.b := mb$ such that
$(a.m).b=a.(m.b)$ holds for all $a\in A$, $b\in B$, and $m\in M$.
A {\defines $\homo$-norm} $\Vert\cdot\Vert_M$ defined on $M$ is a map
\[ \Vert\cdot\Vert_M: M \to \RR^{\>=0} \]
such that:
\begin{enumerate}[label=(N\arabic*)]
  \item
    $\Vert\cdot\Vert_M$ is a norm defined on $\FF$-vector space $M=M_{\FF}$.
    \label{N1}
  \item
    $\Vert a.m.b \Vert_M = \bfvert a \bfvert \Vert m\Vert_M \Vert b\Vert_{B,p}$
    $(= \Vert \homo(a)\Vert_{B,p} \Vert m\Vert_M \Vert b\Vert_{B,p})$ .
    \label{N2}
\end{enumerate}
\end{definition}

\section{Two categories} \label{sect:two cats}

Let $\FF$ be a completed field. Keep the notations from Subsection \ref{sect:norm tens ring},
$A=\alg(\Q_A,\vecd_A,\vecg_A,\FF$, $(\FF_i)_{i\in(\Q_A)_0})$ and
$B=\alg(\Q_B,$ $\vecd_B,\vecg_B,\FF,(\FF_i)_{i\in(\Q_B)_0})$ are tensor rings whose dimensions
$\dimA = \dim_{\FF} A$ and $\dimB = \dim_{\FF} B$ are finite,
and $\homo:A\to B$ is a homomorphism. Then there exists a basis $\basis{A}=\{e_{A,i} \mid 1\=< i\=< \dimA\}$ of $A$
such that $\displaystyle A = \sum_{i=1}^{\dimA} \FF e_{A,i}$ holds.
We assume that $\FF$ contains totally ordered subset a $\II=(\II,\preceq)$ in this paper,
then $\II$ can be written as $[c,d]_{\FF} := \{\lambda\in\FF \mid c\preceq \lambda\preceq d\}$,
where $c$ and $d$ are minimal and maximal in $\II$, respectively.
If $c=d$, then $[c,d]_{\FF} = \{c\} = \{d\}$.
Let $\mu_{\FF}$ be an arbitrary measure defined on $\FF$,
then for the totally ordered subset $\II$, $\mu_{\FF}$ induces a measure $\mu_{\II_A}$ defined on
\[ \II_A = [c,d]_A := \sum_{i=1}^{\dimA} [c,d]_A e_{A,i}
~\mathop{\simeq}\limits^{1-1}~
\II^{\times \dimA} := \prod_{i=1}^{\dimA} \II \]
such that $\mu_{\II_A}(\II_A) = \mu_{\FF}([c,d])^{\dimA}$.
In this section, we introduce two important categories of this paper
by given $1 \=< p \in \RR$, $A$, $B$, $\homo:A\to B$, and $\mu_{\II_A}$.

\subsection{\texorpdfstring{Categories $\scrN_{\homo}^p$ and $\scrA_{\homo}^p$}{Categories Np and Ap}}\label{subsect:two cats}

\subsubsection{Normed module categories} \label{subsubsect:Np}

The following lemma shows that $2^{\dimA}$ $\homo$-normed $(A,B)$-bimodules is also a $\homo$-normed $(A,B)$-bimodule.
This fact will be used to define the category $\scrN_{\homo}^p$.

\begin{lemma} \label{lemm:dir sum}
Let $X$ be the direct sum $\displaystyle X := \bigoplus_{i=1}^{2^{\dimA}} X_i$
of $2^{\dimA}$ $\homo$-normed $(A,B)$-bimodules $X_1$, $X_2$, $\ldots$, $X_{2^{\dimA}}$.
For any disjoint union $\II_A = \bigcup\limits_{i=1}^{\dimA} \II_i$,
The $(A,B)$-bimodule $X$ equipped with the map
\[\Vert \cdot \Vert_X: X \to \RR^{\>=0},
  (x_1,\ldots, x_{2^{\dimA}})
 \mapsto
 \bigg(
   \sum_{i=1}^{2^{\dimA}}
   \bigg(
     \frac{\mu_{\II_A}(\II_i)}{\mu_{\II_A}(\II_A)}
   \bigg)^p
   \Vert x_i \Vert_{X_i}^p
 \bigg)^{\frac{1}{p}} \]
is a $\homo$-normed $(A,B)$-bimodule.
\end{lemma}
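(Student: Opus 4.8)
The plan is to verify the two defining axioms \ref{N1} and \ref{N2} for the map $\Vert\cdot\Vert_X$ on $X = \bigoplus_{i=1}^{2^{\dimA}} X_i$. First I would observe that, up to the fixed positive constants $c_i := \big(\mu_{\II_A}(\II_i)/\mu_{\II_A}(\II_A)\big)$, the proposed map is exactly the $\ell_p$-combination of the norms $\Vert\cdot\Vert_{X_i}$; that is, $\Vert(x_1,\dots,x_{2^{\dimA}})\Vert_X = \big(\sum_i c_i^p \Vert x_i\Vert_{X_i}^p\big)^{1/p}$. So for \ref{N1} it suffices to recall the standard fact that a weighted $\ell_p$-sum of norms is again a norm on the direct sum of $\FF$-vector spaces: nonnegativity and positive homogeneity under the $\FF$-action are immediate from the corresponding properties of each $\Vert\cdot\Vert_{X_i}$, definiteness holds because $\Vert x\Vert_X = 0$ forces each $c_i\Vert x_i\Vert_{X_i} = 0$, hence each $x_i = 0$ (all $c_i > 0$ since $\mu_{\II_A}$ is a measure and the $\II_i$ are nonempty pieces of a disjoint decomposition), and the triangle inequality follows from the triangle inequality in each factor together with Minkowski's inequality in $\ell_p$ applied to the vectors $(c_i\Vert x_i\Vert_{X_i})_i$ and $(c_i\Vert y_i\Vert_{X_i})_i$.

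Next I would check \ref{N2}, which is the genuinely new point. The $(A,B)$-bimodule structure on $X$ is the diagonal one, $a.(x_1,\dots,x_{2^{\dimA}}).b = (a.x_1.b,\dots,a.x_{2^{\dimA}}.b)$, and I would first note this really is a well-defined $(A,B)$-bimodule with $(a.x).b = a.(x.b)$ componentwise because each $X_i$ has this property. Then for \ref{N2} I compute directly:
\[
\Vert a.x.b\Vert_X
= \bigg(\sum_{i=1}^{2^{\dimA}} c_i^p \Vert a.x_i.b\Vert_{X_i}^p\bigg)^{\frac1p}
= \bigg(\sum_{i=1}^{2^{\dimA}} c_i^p \big(\bfvert a\bfvert \Vert x_i\Vert_{X_i} \Vert b\Vert_{B,p}\big)^p\bigg)^{\frac1p},
\]
using axiom \ref{N2} for each $X_i$. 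Pulling the common scalar $\bfvert a\bfvert^p \Vert b\Vert_{B,p}^p$ out of the sum and taking the $p$-th root gives $\bfvert a\bfvert \Vert b\Vert_{B,p} \big(\sum_i c_i^p \Vert x_i\Vert_{X_i}^p\big)^{1/p} = \bfvert a\bfvert \Vert x\Vert_X \Vert b\Vert_{B,p}$, which is exactly \ref{N2} for $X$. Here I use that $\bfvert\cdot\bfvert$ and $\Vert\cdot\Vert_{B,p}$ take values in $\RR^{\geqslant 0}$ so the factoring is valid.

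I do not expect a serious obstacle here: the statement is essentially the assertion that weighted $\ell_p$-direct sums of normed bimodules are normed bimodules, and every step reduces termwise to the hypotheses on the $X_i$. The only points requiring a word of care are (i) confirming the weights $c_i$ are strictly positive, which is why the hypothesis that $\II_A = \bigcup_i \II_i$ is a disjoint decomposition (into nonnull pieces, implicitly) matters — otherwise definiteness in \ref{N1} could fail; and (ii) invoking Minkowski's inequality in the correct form for $p \geqslant 1$, which is legitimate since the hypothesis is $1 \leqslant p \in \RR$. If one wants to be scrupulous about the index count, one should also remark that the lemma's "$\II_A = \bigcup_{i=1}^{\dimA}\II_i$" should range over $i = 1,\dots,2^{\dimA}$ to match the number of summands; I would simply use the decomposition indexed to match $X$.
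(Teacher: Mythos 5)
Your proof is correct and follows essentially the same route as the paper's: the triangle inequality for \ref{N1} via Minkowski's inequality applied to the component norms, and \ref{N2} by factoring $\bfvert a\bfvert$ and $\Vert b\Vert_{B,p}$ out of the weighted $p$-sum termwise. If anything you are slightly more careful than the paper, which reduces to the case of equal weights $c_i=c$ rather than applying Minkowski directly to the weighted vectors $(c_i\Vert x_i\Vert_{X_i})_i$, and you rightly note the strict positivity of the weights needed for definiteness and the index mismatch ($\dimA$ versus $2^{\dimA}$) in the stated disjoint union.
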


\begin{proof}
First of all, for each summand $\bigg(
     \frac{\mu_{\II_A}(\II_i)}{\mu_{\II_A}(\II_A)}
   \bigg)^p
   \Vert x_i \Vert_{X_i}^p$, let $\frac{\mu_{\II_A}(\II_i)}{\mu_{\II_A}(\II_A)} x_i = \tilde{x}_i$,
then this summand is of the form $\Vert \tilde{x_i} \Vert_{X_i}^p$.
Therefore, we can assume $\frac{\mu_{\II_A}(\II_i)}{\mu_{\II_A}(\II_A)} = c$ holds for all $i$ such that the sum \[\sum_{i=1}^{\dimA} \frac{\mu_{\II_A}(\II_i)}{\mu_{\II_A}(\II_A)}  = c\dimA=1 \] in this proof.
Second, $\Vert \cdot \Vert_X$ is a norm in the case of $X$ being a normed $\FF$-vector space
since, for all $x=(x_1,\ldots,x_{2^{\dimA}})$, $x'=(x_1',\ldots,x_{2^{\dimA}}') \in X$,
$\Vert x+x' \Vert \=< \Vert x \Vert_X + \Vert x' \Vert_X$ can be proved by the property
\begin{align*}
   \bigg(\sum_{i=1}^{2^{\dimA}} \Vert x_i+x_i'\Vert_{X_i}^p \bigg)^{\frac{1}{p}}
& \=< \bigg(\sum_{i=1}^{2^{\dimA}} \Vert x_i\Vert_{X_i}^p \bigg)^{\frac{1}{p}}
   + \bigg(\sum_{i=1}^{2^{\dimA}} \Vert x_i'\Vert_{X_i}^p \bigg)^{\frac{1}{p}}
\end{align*}
of the $\homo$-norm $\Vert\cdot\Vert_{X_i}$.
Thus, for each $x = (x_1,\ldots,x_{2^{\dimA}})\in X$, $a\in A$ and $b\in B$, we have
\begin{align*}
  \Vert a.x.b \Vert_X
& = \Vert (a.x_1.b,\ldots, a.x_{2^{\dimA}}.b) \Vert_X \\
& = \bigg(
   c
   \sum_{i=1}^{2^{\dimA}} \Vert a.x_i.b \Vert_{X_i}^p
 \bigg)^{\frac{1}{p}}
  = \bigg(
   c
   \sum_{i=1}^{2^{\dimA}} \bfvert a\bfvert^p \Vert  x_i \Vert_{X_i}^p \Vert b \Vert_{B,p}^p
 \bigg)^{\frac{1}{p}} \\
& = \bfvert a\bfvert \bigg(
  c
   \sum_{i=1}^{2^{\dimA}} \Vert  x_i \Vert_{X_i}^p
 \bigg)^{\frac{1}{p}} \Vert b \Vert_{B,p}
  = \bfvert a\bfvert \Vert x \Vert_X  \Vert b \Vert_{B,p}.
\end{align*}
Therefore, $X$ is a $\homo$-normed $(A,B)$-bimodule.
\end{proof}

\begin{notation} \rm
Fixing a disjoint union $\II_A = \bigcup\limits_{i=1}^{\dimA} \II_i$ of $\II_A$.
If $X_1=X_2=\cdots=X_{2^{\dimA}} = N$ in Lemma \ref{lemm:dir sum},
then $\bigoplus\limits_{i=1}^{2^{\dimA}} X_i$ is written as $N^{\oplus_p 2^{\dimA}}$ for simplicity.
\end{notation}

Next, we define the category $\scrN_{\homo}^p$.

\begin{definition}[normed module category] \rm \label{def:Np}
A {\defines $\homo$-normed module category $\scrN_{\homo}^p$} of $A$
is a class of triples which are of the form $(N,v,\delta)$, where:
\begin{enumerate}[label=($\mathscr{N}$\arabic*)]
  \item $N$ is a $\homo$-normed $(A,B)$-bimodule;  \label{Np1}
  \item $v$ is an element in $N$ with $\Vert v\Vert_M \=< \mu(\II_{\alg})$ such that there is an $(A,B)$-homomorphism $\Pc: B^{\times I} \to N$ in ${_A}\Modcat_B$ with $\Pc((1_B)_{I})=v$, here, $(1_B)_{I} := (1_B)_{1\times I}$ is an element in the Cartesian product $B^{\times I} = \{ (b_i)_{1\times I} := (b_i)_{i\in I} \mid b_i \in B\}$ whose any component is the identity $1_B$ of $B$; \label{Np2}
  \item $\delta: N^{\oplus_p 2^{\dimA}} \to N$ is both a bounded $\FF$-linear map and an $(A,B)$-homomorphism
    (i.e., both a left $A$-homomorphism and a right $B$-homomorphism) satisfying $h((v)_{1\times 2^{\dimA}}) = v$.
    By the boundedness, it is clear that for any Cauchy sequence
    $\{x_i\}_{i\in \NN}$ in the completion $\w{N^{\oplus_p 2^{\dimA}}} = \w{N}^{\oplus_p 2^{\dimA}}$
    of $N^{\oplus_p 2^{\dimA}}$, $\delta(\invlim x_i) = \invlim \delta(x_i)$ holds.  \label{Np3}
\end{enumerate}
And for any two triples $(N,v,\delta)$ and $(N',v',\delta')$ in $\scrN_{\homo}^p$,
we define the morphism $(N,v,\delta) \to (N',v',\delta')$ to be the $(A,B)$-homomorphism
$\theta: N \to N'$ such that the following conditions hold.
\begin{enumerate}[label=($\mathscr{H}$\arabic*)]
  \item $\theta(v) = v'$;
    \label{H1}
  \item
the following diagram
\[
\xymatrix@R=1.5cm@C=1.5cm{
 N^{\oplus_p 2^{\dimA}}
\ar[r]^{\delta}
\ar[d]_{\theta^{\oplus 2^{\dimA}}
= \left(\begin{smallmatrix}
\theta & & \\
& \ddots & \\
& & \theta
\end{smallmatrix}
\right)_{2^{\dimA} \times 2^{\dimA}}
}
 &
 N
\ar[d]^{\theta}
\\
N'^{\oplus_p 2^{\dimA}}
\ar[r]_{\delta'}
 &
N'
}
\]
commutes.
    \label{H2}
\end{enumerate}
\end{definition}

\subsubsection{Banach module categories} \label{subsubsect:Ap}

Let $N$ be a $\homo$-normed $(A,B)$-bimodule. A {\defines Cauchy sequence} in $N$ is a sequence $\{x_u\}_{u=1}^{+\infty}$ such that for each $\epsilon \in \RR^+$, there exists $U \in \NN$ such that $\Vert x_{u_1} - x_{u_2} \Vert_N < \epsilon$ holds for all $u_1, u_2 > U$. Obviously, the sum of two Cauchy sequences is also a Cauchy sequence. In particular, if a Cauchy sequence $\{x_u\}_{u=1}^{+\infty}$ has a limit in $N$, i.e., there is an element $x\in N$ such that $\lim\limits_{u\to+\infty} x_u = x$, then $x$ is also a projective limit $x = \underleftarrow{\lim} x_u$ of $\{x_u\}_{u=1}^{+\infty}$, cf. \cite[Chapter 5, Section 5.2]{R1979}.
We call the {\defines completion} of $N$, say $\w{N}$, is the quotient $N^{\times \NN^+}/[0]$ obtained by $(A,B)$-bimodule
\[ N^{\times \NN^+} := \{ (x_1,x_2,\ldots) := \{x_u\}_{u=1}^{+\infty} \mid
  \{x_u\}_{u=1}^{+\infty} \text{~is a Cauchy sequence in~} N \} \]
modulo $[0]:=\{\{x_u\}_{u=1}^{+\infty} \in N^{\times \NN^+} \mid \{x_u\}_{u=1}^{+\infty} \sim \{0\}_{u=1}^{+\infty}\}$.
Here,
\begin{itemize}
  \item[(1)] the left $A$-action $A \times N^{\NN^+} \to N^{\NN^+}$ is defined as
    $a.(x_1,x_2, \ldots) := (a.x_1, a.x_2, \ldots)$;
  \item[(2)] the right $B$-action $N^{\NN^+} \times B \to N^{\NN^+}$ is defined as
    $(x_1,x_2,\ldots).b := (x_1.b, x_2.b, \ldots)$;
  \item[(3)] and the equivalence relation ``$\sim$'' is defined as
    \[ \{x_u\}_{u=1}^{+\infty} \sim \{y_u\}_{u=1}^{+\infty}
    :\Leftrightarrow \underleftarrow{\lim} (x_u-y_u) = 0.\]
\end{itemize}

\begin{definition}[Banach module] \rm \label{def:Banach}
A $\homo$-normed module $N$ is called a {\defines complete $\homo$-normed $(A,B)$-module} or a {\defines Banach $(A,B)$-module} if any Cauchy sequence $\{x_u\}_{u=1}^{+\infty} \in N^{\NN^+}$ has a limit in $N$, i.e., the map $f: \w{N} \to N, (x_1,x_2,\ldots) \mapsto \underrightarrow{\lim} x_u$ is an isomorphism of $(A,B)$-bimodules.
\end{definition}

For simplicity, we use $x \in \w{N}$ to represent the Cauchy sequence $(x,x,\ldots)$, then the homomorphism $f$ in Definition \ref{def:Banach} induces $N=\w{N}$, which can be viewed as a definition of Banach module.

\begin{definition}[Banach module category] \rm \label{def:Ban mod cat}
A {\defines Banach module category $\scrA_{\homo}^p$} of $A$ is a full subcategory $\scrN_{\homo}^p$ of $\scrA_{\homo}^p$ containing all objects $(N,v,\delta)$ with completed $N$.
\end{definition}

\subsection{Elementary simple functions}

A {\defines $\homo$-function} defined on $\II_A ~\mathop{\simeq}\limits^{1-1}~ \II^{\times \dimA}$ is a map $f: \II_A \to B$.
If $A$ and $B$ are normed tensor rings, then one can obtain two topologies defined on $A$ and $B$ by norms, respectively. Thus, we can define a $\homo$-function $f:\II_A \to B$ is {\defines continuous} if the preimage of any open subset of $\Ima(f)$ is an open subset of $\II_A$.
We do not differential between $\homo$-functions $f_1$ and $f_2$ if $f_1 \mathop{=}\limits^{\text{a.e.}} f_2$ (i.e., if $\mu_{\II_A}(\{f_1(x) \ne f_2(x) \mid x\in\II_A\}) = 0$).
A $\homo$-function $f: \II_A \to B$ is called a {\defines simple $\homo$-function} if its image $\Ima(f)$ is a finite subset of $B$.
All functions in this paper are $\homo$-function for simplicity.

\subsubsection{\texorpdfstring{$(A,B)$-bimodule $\bfS_{\homo}(\II_A)$}{(A,B)-bimodules S(IA)}}

\begin{definition} \rm
An {\defines elementary simple function} is a function
\[f: \II_A \to B, ~ \sum_{i=1}^t k_i\mathbf{1}_{I_i} ~ (k_1,\ldots, k_t \in \FF) \]
such that the following conditions hold.
\begin{itemize}
  \item[(1)]
    The set $I_i$ is a Cartesian product $I_i = I_{i,1} \times \cdots I_{i,\dimA}$, and for any $1\=< j\=< \dimA$,
    $I_{ij}$ is a subset of $\II = [c,d]_{\FF}$ which is one of the following forms:
    \begin{itemize}
      \item[(a)] $(c_{ij}, d_{ij})_{\FF} := \{k\in \FF \mid c_{ij} \prec k \prec d_{ij} \}$;
      \item[(b)] $[c_{ij}, d_{ij})_{\FF} := \{k\in \FF \mid c_{ij} \preceq k \prec d_{ij} \}$;
      \item[(c)] $(c_{ij}, d_{ij}]_{\FF} := \{k\in \FF \mid c_{ij} \prec k \preceq d_{ij} \}$;
      \item[(d)] $[c_{ij}, d_{ij}]_{\FF} := \{k\in \FF \mid c_{ij} \preceq k \preceq d_{ij} \}$,
    \end{itemize}
    where $a \preceq c_{ij} \preceq d$;

  \item[(2)]
    For each subset $S\subseteq A$, $\mathbf{1}_S$ is the function
    \[\mathbf{1}_S: A \to B, ~ a \mapsto
      \begin{cases}
        1_B, & \text{if~} a\in S; \\
        0_B, & \text{otherwise},
      \end{cases} \]
    and for all $1 \=< i \ne j \=< t$, $I_i\cap I_j = \varnothing$ holds
    ($1_B$ and $0_B$ are identity and zero in $B$).
\end{itemize}
\end{definition}

Let $\bfS(\II_A)$ be the set of all elementary simple functions, then the following lemma shows that $\bfS(\II_A)$ as an $\FF$-vector space with the homomorphism $\homo: A\to B$ induces a $(A,B)$-bimodule.

\begin{lemma} \label{lemm:bfS}
The set $\bfS(\II_A)$ of all elementary simple functions defined on $\II_A$ is an $\FF$-vector space.
Furthermore, $\bfS(\II_A)$ equipped with the left $A$-action
\[ A \times \bfS(\II_A) \to \bfS(\II_A), ~ (a,f) \mapsto a.f := (\homo(a)f: x\mapsto \homo(a)f(x))  \]
and the right $B$-action
\[ \bfS(\II_A) \times B \to \bfS(\II_A), ~ (f,b) \mapsto f.b := (fb: x\mapsto f(x)b), \]
say $\bfS_{\homo}(\II_A)$, is an $(A,B)$-bimodule.
\end{lemma}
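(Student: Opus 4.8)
The plan is to verify the vector space and bimodule axioms directly, treating elementary simple functions as elements of the function space $\mathrm{Map}(\II_A, B)$ and checking that $\bfS(\II_A)$ is closed under the relevant operations. First I would show $\bfS(\II_A)$ is an $\FF$-subspace of $\mathrm{Map}(\II_A,B)$: for $f = \sum_{i=1}^t k_i \mathbf{1}_{I_i}$ and $g = \sum_{j=1}^s l_j \mathbf{1}_{J_j}$ with the $I_i$ pairwise disjoint and the $J_j$ pairwise disjoint, I would pass to the common refinement $\{I_i \cap J_j\} \cup \{I_i \setminus \bigcup_j J_j\} \cup \{J_j \setminus \bigcup_i I_i\}$. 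The key point is that each $I_i$, being a product of intervals in $\II = [c,d]_{\FF}$, has the property that the intersection of two such products is again such a product (intersection of two sub-intervals of $\II$ is a sub-interval of one of the four admitted forms, or empty), so the refinement again consists of products of intervals and is pairwise disjoint. Then $f + g$ and $\lambda f$ ($\lambda \in \FF$) are again finite $\FF$-linear combinations of characteristic functions of such pairwise disjoint product sets, hence lie in $\bfS(\II_A)$; the zero function is $\mathbf{1}_{\varnothing}$. Associativity, commutativity, distributivity and the scalar axioms are inherited pointwise from $B$ (viewed as an $\FF$-vector space), so no separate work is needed there.

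Next I would check that the proposed left $A$-action and right $B$-action land in $\bfS(\II_A)$. For $a \in A$ and $f = \sum_i k_i \mathbf{1}_{I_i}$, we have $a.f = \sum_i k_i\, \homo(a)\, \mathbf{1}_{I_i}$, which is the function $x \mapsto \sum_i k_i \homo(a) \mathbf{1}_{I_i}(x)$ taking finitely many values $k_i \homo(a) \in B$, so it is a simple $\homo$-function; moreover it is an $\FF$-linear combination $\sum_i k_i (\homo(a)\cdot \mathbf{1}_{I_i})$ — but here one must be slightly careful, since $\homo(a) \mathbf{1}_{I_i}$ is not literally of the form (scalar in $\FF$)$\times \mathbf{1}_S$ unless $\homo(a)$ is a scalar multiple of $1_B$. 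The clean way is to observe that $\bfS(\II_A)$ as an $\FF$-space is spanned by $\{b\,\mathbf{1}_I : b \in B,\ I \text{ a product of intervals}\}$: indeed each $\mathbf{1}_S$-combination with coefficients in $\FF$ equals such a $B$-combination with $b = k_i 1_B$, and conversely $b\,\mathbf{1}_I$ for general $b = \sum_m f_m e_{B,m} \in B$ equals $\sum_m f_m (e_{B,m}\mathbf{1}_I)$; one then checks that functions $x \mapsto \sum f_m e_{B,m}\mathbf{1}_I(x)$ of this last shape, suitably refined to be disjoint, are elementary simple. With this reformulation, $a.(b\,\mathbf{1}_I) = (\homo(a)b)\mathbf{1}_I$ and $(b\,\mathbf{1}_I).b' = (bb')\mathbf{1}_I$, both again of the admitted shape, so the actions preserve $\bfS(\II_A)$.

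Then the module axioms \ref{1M}--\ref{5M} for the left $A$-action and their right-handed analogues follow pointwise: for each fixed $x \in \II_A$ they reduce to the corresponding identities in $B$ together with the fact that $\homo$ is an algebra homomorphism — e.g. $(a_1a_2).f = \homo(a_1a_2)f = \homo(a_1)\homo(a_2)f = a_1.(a_2.f)$, $1_A.f = \homo(1_A)f = 1_B f = f$, and $\FF$-bilinearity from $\homo(\lambda a) = \lambda \homo(a)$ and linearity of $\homo$. The compatibility $(a.f).b = a.(f.b)$ holds because $(\homo(a)f(x))b = \homo(a)(f(x)b)$ pointwise, using associativity in $B$. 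The main obstacle — really the only non-formal point — is the closure of $\bfS(\II_A)$ under addition, i.e. producing the common pairwise-disjoint refinement by intervals; everything else is routine pointwise verification, so I would state that refinement carefully (including the observation that a one-dimensional intersection $I_{i,k} \cap J_{j,k}$ of two sub-intervals of $[c,d]_\FF$ is again one of the four admitted interval types or empty, and hence the $\dimA$-fold products intersect to admitted product sets) and leave the remaining axioms to the reader.
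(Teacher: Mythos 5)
Your proposal is correct and, on the part the paper actually writes out, follows the same route: the module axioms \ref{1M}--\ref{5M} and the compatibility $(a.f).b=a.(f.b)$ are verified pointwise from the fact that $\homo$ is an algebra homomorphism, which is exactly what the paper's proof does. Where you differ is that you also prove the two closure statements that the paper silently omits: (i) that $\bfS(\II_A)$ is closed under addition, via a common pairwise-disjoint refinement of the interval products, and (ii) that the $A$- and $B$-actions actually land in $\bfS(\II_A)$. Point (ii) is a genuine observation: with the paper's literal definition (coefficients $k_i\in\FF$, so $f$ takes values in $\FF 1_B$), the function $a.f$ takes values $k_i\homo(a)$ which need not lie in $\FF 1_B$, so closure fails unless one re-reads $\bfS(\II_A)$ as the $\FF$-span of $\{b\,\mathbf{1}_I \mid b\in B\}$ --- which is how the paper uses it later (e.g.\ in Proposition \ref{prop:normed bfS}, where elements are written $\sum_i b_i\mathbf{1}_{I_i}$ with $b_i\in B$). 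Your reformulation is the right fix and makes the lemma true as intended. One small imprecision: in the refinement argument, $I_i\setminus\bigcup_j J_j$ is in general not itself a product of intervals but only a \emph{finite disjoint union} of such products (already in dimension one a difference of intervals can have two components); this is harmless and repaired by the standard coordinate-wise decomposition, but the sentence claiming the refinement ``consists of products of intervals'' should be adjusted accordingly.
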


\begin{proof}
For each $a, a_1, a_2\in A$, $b'\in B$, $f,f_1,f_2\in \bfS_{\homo}(\II_A)$, and $x\in\II_A$, we have:
\begin{itemize}
  \item[(1)] $((a_1+a_2).f)(x) = \homo(a_1+a_2)f(x) = (\homo(a_1)+\homo(a_2))f(x) = \homo(a_1)f+\homo(a_2)f(x) = (a_1.f+a_2.f)f(x)$ (see \ref{1M});
  \item[(2)] $(a.(f_1+f_2))(x) = \homo(a)(f_1+f_2)(x) = \homo(a)f_1(x)+\homo(a)f_2(x) = (a.f_1 + a.f_2)(x)$ (see \ref{2M});
  \item[(3)] $((a_1a_2).f)(x) = \homo(a_1a_2)f(x) = \homo(a_1)\homo(a_2)f(x) = \homo(a_1)(\homo(a_2)f(x)) = (a_1.(a_2.f))(x)$ (see \ref{3M});
  \item[(4)] $(1_A.f)(x) = \homo(1_A)f(x) = 1_Bf(x) = f(x)$ (see \ref{4M});
  \item[(5)] and, for any $\lambda\in \FF$, $(\lambda a).f = \homo(\lambda a)f=
    \begin{cases}
      (\lambda (\homo(a)f))(x) = \lambda (a.f)(x) \\
      \homo(a) (\lambda f(x)) = (a.(\lambda f))(x)
    \end{cases}
  $ (see \ref{5M}).
\end{itemize}
Thus, $\bfS_{\homo}(\II_A)$ is a left $A$-module. One can check that \ref{M1}--\ref{M5} holds, and thus, $\bfS_{\homo}(\II_A)$ is a right $A$-module. Finally, we have $(a.(f.b'))(x) = \homo(a)(f(x)b') = (\homo(a)f(x))b' = ((a.f).b')(x)$, it follows that $\bfS_{\homo}(\II_A)$ is an $(A,B)$-bimodule as required.
\end{proof}

\begin{proposition} \label{prop:normed bfS}
The $(A,B)$-bimodule $\bfS_{\homo}(\II_A)$ with the map
\[ \Vert \cdot \Vert_p : \bfS_{\homo}(\II_A) \to \RR^{\>=0}, \
~ f = \sum_{i=1}^t b_i \mathbf{1}_{I_i}
\mapsto \bigg( (\Vert b_i \Vert_{B,p}^p \mu_{\II_A}(I_i))^p \bigg)^{\frac{1}{p}} \]
is a $\homo$-normed $(A,B)$-bimodule.
\end{proposition}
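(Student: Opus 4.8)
The plan is to verify directly that the stated map $\Vert\cdot\Vert_p$ satisfies the two axioms \ref{N1} and \ref{N2} of a $\homo$-norm. Since Lemma \ref{lemm:bfS} already establishes that $\bfS_{\homo}(\II_A)$ is an $(A,B)$-bimodule, the only work is to check that $\Vert\cdot\Vert_p$ is (i) a norm on the $\FF$-vector space $\bfS_{\homo}(\II_A)$, and (ii) compatible with the left $A$- and right $B$-actions in the multiplicative way required by \ref{N2}.

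The first step is the well-definedness of $\Vert\cdot\Vert_p$: given $f\in\bfS_{\homo}(\II_A)$, one must check the value does not depend on the chosen representation $f=\sum_i b_i\mathbf 1_{I_i}$ with the $I_i$ pairwise disjoint and covering $\II_A$. I would argue this by the standard refinement trick: any two such representations have a common refinement obtained by intersecting the pieces, and because $\mu_{\II_A}$ is additive on disjoint unions, splitting a block $I_i$ into finitely many disjoint sub-blocks $I_i=\bigsqcup_k I_i^{(k)}$ leaves $\Vert b_i\Vert_{B,p}^p\mu_{\II_A}(I_i)=\sum_k\Vert b_i\Vert_{B,p}^p\mu_{\II_A}(I_i^{(k)})$ unchanged; since we identify functions that agree a.e., blocks of measure zero may be discarded. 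The second step is to check the norm axioms on $\bfS_{\homo}(\II_A)=\bfS(\II_A)$ as an $\FF$-vector space: positivity and $\Vert f\Vert_p=0\iff f=0$ a.e.\ are immediate from $\Vert b_i\Vert_{B,p}=0\iff b_i=0_B$ (as $\Vert\cdot\Vert_{B,p}$ is a norm on $B$ by \eqref{norm B}) together with the a.e.\ identification; homogeneity $\Vert\lambda f\Vert_p=|\lambda|\,\Vert f\Vert_p$ follows from $\Vert\lambda b_i\Vert_{B,p}=|\lambda|\,\Vert b_i\Vert_{B,p}$ and pulling $|\lambda|^p$ out of the sum; and the triangle inequality I would prove by first passing to a common refinement of the partitions underlying $f$ and $g$ so that on each common block $I_i$ the value of $f+g$ is $b_i+b_i'$, then using $\Vert b_i+b_i'\Vert_{B,p}\le\Vert b_i\Vert_{B,p}+\Vert b_i'\Vert_{B,p}$ followed by the discrete Minkowski inequality in $\ell^p$ applied to the vectors $(\Vert b_i\Vert_{B,p}\mu_{\II_A}(I_i)^{1/p})_i$ and $(\Vert b_i'\Vert_{B,p}\mu_{\II_A}(I_i)^{1/p})_i$. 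This is exactly the mechanism already used inside the proof of Lemma \ref{lemm:dir sum}.

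The third step is condition \ref{N2}. For $a\in A$, $b\in B$ and $f=\sum_i b_i\mathbf 1_{I_i}$, the action definitions in Lemma \ref{lemm:bfS} give $a.f.b=\sum_i(\homo(a)\,b_i\,b)\mathbf 1_{I_i}$, so
\[
\Vert a.f.b\Vert_p=\Bigl(\sum_i\Vert\homo(a)\,b_i\,b\Vert_{B,p}^p\,\mu_{\II_A}(I_i)^p\Bigr)^{1/p}.
\]
Here I would invoke the multiplicativity properties of $\Vert\cdot\Vert_{B,p}$ built into the setup of Section \ref{sect:norm}: $\Vert\homo(a)\,b_i\,b\Vert_{B,p}=\Vert\homo(a)\Vert_{B,p}\,\Vert b_i\Vert_{B,p}\,\Vert b\Vert_{B,p}=\bfvert a\bfvert\,\Vert b_i\Vert_{B,p}\,\Vert b\Vert_{B,p}$, using the definition $\bfvert a\bfvert=\Vert\homo(a)\Vert_{B,p}$. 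Pulling the scalars $\bfvert a\bfvert$ and $\Vert b\Vert_{B,p}$ out of the $p$-sum yields $\Vert a.f.b\Vert_p=\bfvert a\bfvert\,\Vert f\Vert_p\,\Vert b\Vert_{B,p}$, which is precisely \ref{N2}. Combining with steps one and two, $\bfS_{\homo}(\II_A)$ with $\Vert\cdot\Vert_p$ is a $\homo$-normed $(A,B)$-bimodule.

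I expect the main obstacle to be the submultiplicativity/multiplicativity claim $\Vert\homo(a)\,b_i\,b\Vert_{B,p}=\bfvert a\bfvert\,\Vert b_i\Vert_{B,p}\,\Vert b\Vert_{B,p}$ in step three, since the formula \eqref{norm B} defines $\Vert\cdot\Vert_{B,p}$ coordinatewise in a fixed basis $\basis{B}$ and there is no a priori reason a basis-dependent $\ell^p$-norm is multiplicative for the ring multiplication of $B$; one must use the specific structure of the basis coming from the modulation (products of basis elements being again scalar multiples of basis elements, as in the tensor-ring multiplication $\bfE_{ij}\otimes\bfE_{i'j'}=\bfE_{ij}\bfE_{i'j'}$) together with the compatibility constraint on $\norm_B$ that makes \eqref{formula:norm on alg} submultiplicative — the same hypothesis under which $\bfvert\cdot\bfvert$ was shown to be a seminorm in Section \ref{sect:norm}. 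If full multiplicativity of $\Vert\cdot\Vert_{B,p}$ is not available, the fallback is to carry \ref{N2} as the \emph{defining} compatibility and simply note that it holds here because $\homo(a)\,b_i\,b$ is, blockwise, the image of $b_i$ under left multiplication by $\homo(a)$ and right multiplication by $b$, both of which scale the $\ell^p$-norm by the asserted factors under the standing assumptions on $\norm_B$.
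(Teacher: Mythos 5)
Your overall architecture (take the bimodule structure from Lemma \ref{lemm:bfS}, then verify \ref{N1} and \ref{N2}) matches the paper's, but the two verifications go differently. On \ref{N2} you are actually more careful than the paper, which disposes of it in one sentence (``\ref{N2} can be proved by using the fact $a.f.b=T(a)fb$'') and never confronts the point you flag: the identity $\Vert \homo(a)\,b_i\,b\Vert_{B,p}=\Vert\homo(a)\Vert_{B,p}\Vert b_i\Vert_{B,p}\Vert b\Vert_{B,p}$ is not automatic for the basis-dependent $\ell^p$-norm (\ref{norm B}), and neither the paper nor Section \ref{sect:norm tens ring} supplies a hypothesis on $\norm_B$ that forces it. So your ``main obstacle'' diagnosis is accurate; just be aware that the paper offers no resolution for you to lean on, so if you keep this step you must either impose multiplicativity of $\Vert\cdot\Vert_{B,p}$ as a standing assumption or restrict to the situations (e.g.\ $B$ a normed division algebra) where it holds.

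The genuine divergence is the triangle inequality, and it is not merely stylistic. The paper's proof manipulates $\Vert f\Vert_p=\big(\sum_i\Vert b_i\Vert_{B,p}^p\,\mu_{\II_A}(I_i)^{p}\big)^{1/p}$ --- note the exponent $p$ on the measure, consistent with the weights $\big(\mu_{\II_A}(\II_i)/\mu_{\II_A}(\II_A)\big)^p$ in Lemma \ref{lemm:dir sum} --- and accordingly it does \emph{not} refine partitions: it splits $f+g$ over the three regions $I_i\setminus\bigcup_j I_j'$, $I_j'\setminus\bigcup_i I_i$ and the overlaps $I_i\cap I_j'$, then closes the estimate with the superadditivity $(\mu(X)+\mu(Y))^p\ge\mu(X)^p+\mu(Y)^p$ and the subadditivity of $t\mapsto t^{1/p}$. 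Your route (common refinement, then $\Vert b_i+b_i'\Vert_{B,p}\le\Vert b_i\Vert_{B,p}+\Vert b_i'\Vert_{B,p}$, then discrete Minkowski with weights $\mu_{\II_A}(I_i)$ to the first power) is the standard $L_p$ argument and is correct for the norm as you read it, but your opening step --- refinement invariance of $\Vert b_i\Vert_{B,p}^p\,\mu_{\II_A}(I_i)$ --- silently replaces $\mu_{\II_A}(I_i)^p$ by $\mu_{\II_A}(I_i)$: splitting $I_i$ into sub-blocks changes $\sum_k\mu_{\II_A}(I_i^{(k)})^p$ whenever $p>1$, so under the paper's reading both your well-definedness argument and your reduction to a common partition fail. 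For $p=1$ the two readings coincide and your proof is complete (and cleaner than the paper's, whose overlap term $(\Vert b_i\Vert_{B,p}^p+\Vert b_j'\Vert_{B,p}^p)\mu_{\II_A}(I_i\cap I_j')^p$ needs $\Vert b_i+b_j'\Vert^p\le\Vert b_i\Vert^p+\Vert b_j'\Vert^p$, which is false for $p>1$). The displayed formula in the statement is garbled enough that your reading is defensible, but you should state explicitly which norm you are proving the proposition for, since only the first-power version supports the refinement trick your argument is built on.
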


\begin{proof}
We need to show that \ref{N1} and \ref{N2} hold. However, the difficulty of the proof of \ref{N1} lies in the proof of the triangle inequality, and \ref{N2} can be proved by using the fact $a.f.b = T(a)fb$. Thus, we only prove the triangle inequality in this proof.

For two arbitrary functions $f = \sum_i b_i\mathbf{1}_{I_i}$ and $g = \sum_j b_j'\mathbf{1}_{I_j'}$
(here, if $i\ne \imath$, then $I_i\cap I_{\imath} = \varnothing$; and if $j \ne \jmath$, then $I_j\cap I_{\jmath} = \varnothing$), we have
\[ f+g = \sum_i b_i\mathbf{1}_{I_i \backslash \bigcup_j I_j'}
+ \sum_j b_j'\mathbf{1}_{I_j' \backslash \bigcup_j I_j}
+ \sum_{I_i\cap I_j'=\varnothing} (b_i \mathbf{1}_{I_i\cap I_j'}
+ b_i' \mathbf{1}_{I_i\cap I_j'}). \]
Then
\[ \Vert f+g\Vert_p = ({\color{red}R} + {\color{green}G}+ {\color{blue}B})^{\frac{1}{p}} , \]
where
\begin{align*}
{\color{red}R}
  &= {\color{red} \sum_i \Vert b_i \Vert_{B,p}^p \mu_{\II_A}(I_i \backslash \bigcup_j I_j')^p } ; \\
{\color{green}G}
  &= {\color{green}\sum_j \Vert b_j' \Vert_{B,p}^p \mu_{\II_A}(I_j' \backslash \bigcup_i I_i)^p }; \\
{\color{blue}B}
  &= {\color{blue} \sum_{I_i \cap I_j' = \varnothing}
     (\Vert b_i \Vert_{B,p}^p + \Vert b_i' \Vert_{B,p}^p)
     \mu_{\II_A}(I_i \cap I_j')^p
     }.
\end{align*}
By the discrete Minkowski inequality, we have
\begin{align*}
  \Vert f \Vert_p + \Vert g \Vert_p
&= \bigg( \sum_i \Vert b_i\Vert_{B,p}^p \mu_{\II_A}(I_i)^p \bigg)^{\frac{1}{p}}
 + \bigg( \sum_i \Vert b_j'\Vert_{B,p}^p \mu_{\II_A}(I_j)^p \bigg)^{\frac{1}{p}} \\
& \>= \bigg( \sum_i \Vert b_i\Vert_{B,p}^p \mu_{\II_A}(I_i)^p
  + \sum_i \Vert b_j'\Vert_{B,p}^p \mu_{\II_A}(I_j)^p \bigg)^{\frac{1}{p}} =: \mathfrak{N}.
\end{align*}
Note that $\mu_{\II_A}(X\cup Y) = \mu_{\II_A}(X)+\mu_{\II_A}(Y)$ holds for all $X, Y \subseteq \II_A$ with $X\cap Y = \varnothing$, we have $\mu(X\cap Y)^p = (\mu_{\II_A}(X)+\mu_{\II_A}(Y))^p \>= \mu_{\II_A}(X)^p+\mu_{\II_A}(Y)^p$, then
\[ \mu_{\II_A}(I_i)^p \>=
   \mu_{\II_A}(I_i \backslash \bigcup\nolimits_j I_j')^p
 + \mu_{\II_A}(I_i \cap \bigcup\nolimits_j I_j')^p. \]
It admits
\begin{align}
        \sum\nolimits_i \Vert b_i\Vert_{B,p}^p\mu(I_i)^p
\>=\ &  \sum\nolimits_i \Vert b_i\Vert_{B,p}^p\mu_{\II_A}\big(I_i\backslash\bigcup\nolimits_j I_j'\big)^p
        + \sum\nolimits_i \Vert b_i\Vert_{B,p}^p\mu\big(I_i\cap\bigcup\nolimits_j I_j'\big)^p
        \nonumber \\
  =\ &  {\color{red}R}
        + \sum\nolimits_i \Vert b_i\Vert_{B,p}^p
        \bigg(\mathop{\sum\nolimits_{j}}\limits_{I_i\cap I_j'\ne\varnothing}\mu(I_i \cap I_j')\bigg)^p
        \nonumber \\
  \mathop{\>=}\
     &  {\color{red}R} + \sum_{I_i\cap I_j'\ne\varnothing} \Vert b_i\Vert_{B,p}^p\mu(I_i\cap I_j')^p \nonumber \\
  (\text{write it as~} & {\color{red}R} + B_1), \nonumber
\end{align}
and, similarly, admits
\begin{align}
       \sum\nolimits_j \Vert b_j'\Vert_{B,p}^p\mu(I_j')^p
\>=\ & {\color{green}G} + \sum_{I_j'\cap I_i\ne\varnothing} \Vert b_j'\Vert_{B,p}^p\mu(I_j'\cap I_i)^p \nonumber \\
  (\text{write it as~} & {\color{green}G} + B_2). \nonumber
\end{align}
Clearly, ${\color{blue} B}=B_1+B_2$, and so, $\mathfrak{N} = (({\color{red}R} + B_1) + ({\color{green}G} + B_2))^{\frac{1}{p}} = ({\color{red}R}+ {\color{green}G} + {\color{blue}B})^{\frac{1}{p}}$ $=\Vert f+g\Vert_p$ as required.
\end{proof}

\subsubsection{\texorpdfstring{$(A,B)$-bimodule $E_u$}{(A,B)-bimodule Eu}} \label{subsubsect:Eu}

Now, assume that there is an element $\xi\in (c,d)_{\FF}$ such that two maps order-preserving bijections $\kappa_c: [c,d]_{\FF} \to [c,\xi]_{\FF}$ and $\kappa_d: [c,d]_{\FF} \to [\xi,d]_{\FF}$ exist,
and define
\begin{align}\label{formula:E0}
  E_0 := \{ f: \II_A \to B \text{~is a~$\homo$-function~} \mid f(x)=b \text{~is a constant in~} B \}.
\end{align}
The following lemma shows that $E_0$ is an $(A,B)$-bimodule.

\begin{lemma} \label{lemm:E0}
Left $A$-action and right $B$-action
\[ A \times E_0 \to E_0, (a, f(x)) \mapsto a.f(x):=\homo(a)f(x)\]
and
\[ E_0 \times B \to E_0, (f(x), b) \mapsto f(x).b:=f(x)b \]
admit that $E_0$ is an $(A,B)$-bimodule. Furthermore, $E_0\cong B$.
\end{lemma}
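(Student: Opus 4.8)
The plan is to verify directly that the two prescribed actions satisfy the module axioms and then exhibit an explicit isomorphism $E_0 \cong B$. Since $E_0$ consists of constant $\homo$-functions $f(x) = b$, the assignment $f \mapsto f(0_A) = b$ (equivalently, evaluation at any fixed point of $\II_A$) gives a bijection $\Phi: E_0 \to B$. First I would note that $\Phi$ is an $\FF$-linear bijection: the sum of two constant functions is the constant function with value the sum, and scaling a constant function by $\lambda\in\FF$ scales its value, so $\Phi$ respects the $\FF$-vector space structure; injectivity and surjectivity are immediate since every $b\in B$ is the value of the constant function $x\mapsto b$.

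Next I would check that the left $A$-action and right $B$-action on $E_0$ make it an $(A,B)$-bimodule, and that $\Phi$ intertwines these with the obvious $(A,B)$-bimodule structure on $B$ itself (left action $a.b := \homo(a)b$ via the homomorphism $\homo: A\to B$, right action given by multiplication in $B$). Concretely, for a constant function $f$ with value $b$, the function $a.f$ is constant with value $\homo(a)b = \homo(a)f(x)$, so $\Phi(a.f) = \homo(a)\Phi(f) = a.\Phi(f)$; similarly $\Phi(f.b') = \Phi(f)b' = \Phi(f).b'$. The verification of axioms \ref{1M}--\ref{5M} for the left action and \ref{M1}--\ref{M5} for the right action then reduces, via $\Phi$, to the corresponding identities in $B$: for instance $(a_1 a_2).f$ has value $\homo(a_1 a_2)b = \homo(a_1)\homo(a_2)b$ because $\homo$ is a homomorphism of tensor rings, which is exactly \ref{3M}; the unit axiom \ref{4M} uses $\homo(1_A) = 1_B$; and the compatibility $(a.f).b' = a.(f.b')$ follows from associativity in $B$ since both sides are the constant function with value $\homo(a)bb'$. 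The middle-linearity conditions \ref{5M} and \ref{M5} follow from centrality of the $\FF$-action (condition \ref{(2.3)} in the modulation data), exactly as in the proof of Lemma \ref{lemm:bfS}.

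Having done this, the conclusion $E_0 \cong B$ as $(A,B)$-bimodules is immediate: $\Phi$ is an $\FF$-linear bijection that commutes with both the left $A$-action and the right $B$-action, hence an isomorphism of $(A,B)$-bimodules. I do not anticipate a genuine obstacle here — this lemma is essentially bookkeeping, parallel to Lemma \ref{lemm:bfS} and Lemma \ref{lemm:E0}'s own statement; the only point requiring a moment's care is making sure the left $A$-module axioms are stated and checked in the form \ref{1M}--\ref{5M} (so that $\homo$ being an algebra homomorphism and $\homo(1_A)=1_B$ are invoked at the right places), and noting that the identification of $E_0$ with the set of constant functions is literally built into the definition \eqref{formula:E0}, so the bijection $\Phi$ requires no argument beyond unwinding that definition.
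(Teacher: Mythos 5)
Your proposal is correct and follows essentially the same route as the paper: the paper likewise verifies the bimodule axioms by the same method as Lemma \ref{lemm:bfS} and then exhibits the map $h: (f:\II_A\to\{b\})\mapsto b$ (your $\Phi$), checking additivity, the identity $h(a.f.b')=\homo(a)bb'=a.h(f).b'$, and bijectivity to conclude $E_0\cong B$. The only cosmetic difference is that you describe $\Phi$ as evaluation at a point of $\II_A$ rather than as the assignment of the constant value, which amounts to the same thing.
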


\begin{proof}
One can check that $E_0$ is an $(A,B)$-bimodule by a method similar to the proof of Lemma \ref{lemm:bfS}.
On the other hand, the corresponding $h: E_0 \to B, (f: \II_A \to \{b\}) \mapsto b$ satisfies
\[ h(f_1+f_2: \II_A \to \{b_1+b_2\}) = b_1+b_2 = h(f_1)+h(f_2)
~ (\forall f_1,f_2\in E_0) \]
and
\[ h(a.f.b') = \homo(a)bb' = a.h(f).b' ~ (\forall a\in A, b'\in B \text{~and~} f:\II_A\to\{b\} \in E_0).  \]
Thus, $h$ is a homomorphism between two $(A,B)$-bimodules.
It is clear that $h$ is a bijection. Then $h$ is an isomorphism.
\end{proof}

Assume $\basis{A} = \{e_{A,i} \mid 1 \=< i \=< \dim_{\FF} A =\dimA\} = (\Q_A)_{\>= 0}$.
Then any element $x\in\II_A$ has a decomposition
\[ x = \sum_{i=1}^{\dimA} k_i e_{A,i}, ~ k_1,\ldots, k_{\dimA} \in \FF. \]
For a sequence
\[\pmb{f}=(f_{(\sigma_1,\ldots,\sigma_{\dimA})} : \II_A \to B)_{(\sigma_1,\ldots,\sigma_{\dimA}) \in \{c,d\}^{\times \dimA}}\]
of any $2^{\dimA}$ functions, we define $\gamma_{\xi}(\pmb{f})$ is the function
\begin{align}\label{jux map}
\gamma_{\xi} (\pmb{f})(k_1,\ldots, k_{\dimA})
 = \sum_{(\sigma_1,\ldots,\sigma_{\dimA}) \in \{c,d\}^{\times \dimA}}
   \mathbf{1}_{ \Pi_{(\sigma_1,\ldots,\sigma_{\dimA})} }
   f_{(\sigma_1,\ldots,\sigma_{\dimA})}
     (
       \kappa_{\sigma_1}^{-1}(k_1), \ldots, \kappa_{\sigma_{\dimA}}^{-1}(k_{\dimA})
     )
\end{align}
\[(k_1 \ne \xi, \ldots, k_{\dimA} \ne \xi), \]
where $\Pi_{(\sigma_1,\ldots,\sigma_{\dimA})} := \displaystyle
\prod\limits_{i=1}^{\dimA} \kappa_{\sigma_i}(\II)$ satisfies that
$\Pi_{(\sigma_1,\ldots,\sigma_{\dimA})} \cap \Pi_{(\tilde{\sigma}_1,\ldots,\tilde{\sigma}_{\dimA})}
= \varnothing$ holds for all $(\sigma_1,\ldots,\sigma_{\dimA}) \ne (\tilde{\sigma}_1,\ldots,\tilde{\sigma}_{\dimA})$.

Let $\Func(\II_A)$ be the set of all functions $A \to B$, then it is an $(A,B)$-bimodule, and $\gamma_{\xi}$ can be seen as a map
\[ \gamma_{\xi}: \Func(\II_A)^{\oplus 2^{\dimA}} \to \Func(\II_A). \]
In general, we do not define a norm on $\Func(\II_A)$ (such as the set of all functions $f: [0,1] \to \RR$), thus $\Func(\II_A)^{\oplus 2^{\dimA}}$ is only a direct sum of $2^{\dimA}$ $(A,B)$-bimodules $\Func(\II_A)$ it the above map.
Moreover, $E_0 \subseteq \Func(\II_A)$ is clear, then we have a restriction $\gamma_{\xi}|_{E_0}: E_0^{\oplus_p 2^{\dimA}} \to \Func(\II_A)$. The map $\gamma_{\xi}$ is called a {\defines juxtaposition map} in \cite{Lei2023FA}.

\begin{example} \rm
Consider the case of $A=\RR^3$ being a semi-simple algebra and $B=\RR$ being a field, and let $\II_A = [0,1]^{\times 3}$, $0<\xi<1$, and $\color{violet}f_{000}(x,y,z)$, $\color{red}f_{100}(x,y,z)$, $\color{green}f_{110}(x,y,z)$, $\color{cyan}f_{010}(x,y,z)$, $\color{green}f_{001}(x,y,z)$, $\color{cyan}f_{101}(x,y,z)$, $\color{violet}f_{111}(x,y,z)$, $\color{red}f_{011}(x,y,z)$ be eight functions in $\Func(\II_A)$. In \Pic \ref{fig:jux map}, we draw the domains of $f_{011}(x,y,z)$ and $f_{111}(x,y,z)$ (see the cubes marked by $\mathrm{Dom}(f_{011})$ and $\mathrm{Dom}(f_{111})$), and the domains of other seven functions are ignored for simplicity.
\begin{figure}[htbp]
  \centering
\includegraphics[width=16.5cm]{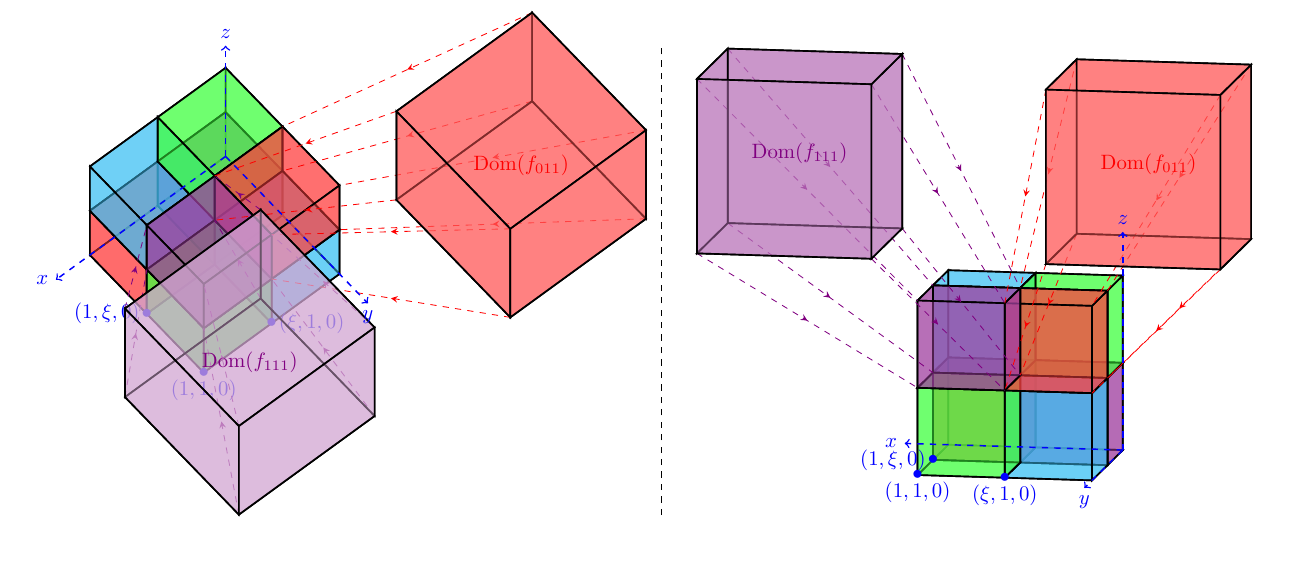} \\
  \caption{Juxtaposition map}
  \label{fig:jux map}
\end{figure}
Then $(f_{000}, f_{100}, \ldots, f_{011})$ is an element in $\Func(\II_A)^{\oplus 8}$, and $\gamma_{\xi}$ sends it to a function $\gamma_{\xi}(f_{000}, f_{100}, \ldots, f_{011})$ whose domain is a cube $[0,1]^{\times 3} \subseteq A = \RR^3$, see the cube with a side length of $1$ which is formed by splicing $8$ small cubes as shown in \Pic \ref{fig:jux map}. The dashed arrow ``$\color{red}{\xymatrix{\ar@{-->}[r]&}}$'' shown in this figure represents applying juxtaposition map $\gamma_{\xi}$ to the function $f_{011}(x,y,z)$.
\end{example}

We define
\[ E_1 := \Ima(\gamma_{\xi}|_{E_0}), \]
and for each $u\in\NN$, we define
\[ E_{u+1} := \Ima(\gamma_{\xi}|_{E_u}). \]
Then we have the following lemma.

\begin{lemma} \label{lemm:Eu}
All $E_u$ are normed $(A,B)$-bimodules. Furthermore, for each $u\in \NN$, $\gamma_{\xi}|_{E_u}$ provide an isomorphism $E_u^{\oplus_p 2^{\dimA}} \cong E_{u+1}$ between two $(A,B)$-bimodules.
\end{lemma}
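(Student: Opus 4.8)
The plan is to prove the statement by induction on $u$, exploiting the definition $E_{u+1} := \Ima(\gamma_{\xi}|_{E_u})$ and the structure of the juxtaposition map $\gamma_{\xi}$. The base case $u=0$ is essentially Lemma~\ref{lemm:E0}: there we learned $E_0 \cong B$ is an $(A,B)$-bimodule, and $B$ carries the norm $\Vert\cdot\Vert_{B,p}$, so $E_0$ is a normed $(A,B)$-bimodule via transport of structure (one should check the transported norm satisfies \ref{N2} with respect to the $\homo$-twisted left action, which follows from $a.f.b = \homo(a)\,f\,b$ as observed in the proof of Lemma~\ref{lemm:bfS}). For the inductive step, assume $E_u$ is a normed $(A,B)$-bimodule; I want to produce an $(A,B)$-bimodule isomorphism $E_u^{\oplus_p 2^{\dimA}} \cong E_{u+1}$, and then transport the $\homo$-norm on $E_u^{\oplus_p 2^{\dimA}}$ (which exists by Lemma~\ref{lemm:dir sum}) across this isomorphism to get a $\homo$-norm on $E_{u+1}$.

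The heart of the argument is showing that $\gamma_{\xi}|_{E_u}: E_u^{\oplus_p 2^{\dimA}} \to E_{u+1}$ is an isomorphism of $(A,B)$-bimodules. First, surjectivity onto $E_{u+1}$ is by definition of $E_{u+1}$ as the image. Second, injectivity: given $\pmb{f} = (f_{\vec\sigma})_{\vec\sigma\in\{c,d\}^{\times\dimA}}$, the value of $\gamma_{\xi}(\pmb{f})$ on the block $\Pi_{\vec\sigma} = \prod_i \kappa_{\sigma_i}(\II)$ is exactly $f_{\vec\sigma}\circ(\kappa_{\sigma_1}^{-1},\ldots,\kappa_{\sigma_{\dimA}}^{-1})$; since the blocks $\Pi_{\vec\sigma}$ are pairwise disjoint and the $\kappa_{\sigma_i}$ are order-preserving bijections onto $[\,c,\xi\,]_{\FF}$ resp. $[\,\xi,d\,]_{\FF}$, each component $f_{\vec\sigma}$ can be recovered from $\gamma_{\xi}(\pmb{f})$ by restricting to $\Pi_{\vec\sigma}$ and precomposing with $(\kappa_{\sigma_1},\ldots,\kappa_{\sigma_{\dimA}})$ — so $\gamma_{\xi}(\pmb{f}) \mathop{=}\limits^{\text{a.e.}} 0$ forces every $f_{\vec\sigma} = 0$ (the finitely many points $k_i = \xi$ form a $\mu_{\II_A}$-null set, which is why the a.e.\ convention matters). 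Third, $\FF$-linearity and compatibility with the $A$- and $B$-actions: from the defining formula \eqref{jux map}, $\gamma_{\xi}$ is given componentwise by precomposition with fixed maps and multiplication by indicator functions $\mathbf{1}_{\Pi_{\vec\sigma}}$, and since the left $A$-action is $a.f(x) = \homo(a)f(x)$ and the right $B$-action is $f(x).b = f(x)b$ — both acting pointwise on values in $B$ — they commute with precomposition and with multiplication by the scalar-valued $\mathbf{1}_{\Pi_{\vec\sigma}}$; hence $\gamma_{\xi}$ is an $(A,B)$-homomorphism.

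Once the isomorphism is established, the norm on $E_{u+1}$ is \emph{defined} to be the pushforward of the $\homo$-norm on $E_u^{\oplus_p 2^{\dimA}}$ from Lemma~\ref{lemm:dir sum}; then \ref{N1} and \ref{N2} for $E_{u+1}$ hold automatically because an $(A,B)$-bimodule isomorphism carries a $\homo$-norm to a $\homo$-norm. This also makes $\gamma_{\xi}|_{E_u}$ an \emph{isometric} isomorphism, which is slightly stronger than claimed and will be convenient downstream. I expect the main obstacle to be the injectivity/recoverability step: one has to be careful that the blocks $\Pi_{\vec\sigma}$ genuinely tile $\II_A$ up to a null set (this uses that $\kappa_c,\kappa_d$ are order-preserving bijections onto $[c,\xi]_{\FF}$ and $[\xi,d]_{\FF}$ and that $[c,\xi]_{\FF}\cap[\xi,d]_{\FF} = \{\xi\}$, together with the product structure over the $\dimA$ coordinates), and that the $\mathop{=}\limits^{\text{a.e.}}$ identification is compatible with this decomposition. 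A secondary subtlety is checking that the norm defined on $E_{u+1}$ by transport agrees with (or is at least compatible with) the ``intrinsic'' $\Vert\cdot\Vert_p$-style norm one might write down directly on simple-function-like elements of $E_{u+1}$; but since the lemma only asserts existence of a normed bimodule structure together with the isomorphism, transporting the norm is a legitimate and clean route.
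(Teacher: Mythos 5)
Your proposal is correct and follows essentially the same route as the paper: surjectivity of $\gamma_{\xi}|_{E_u}$ is by definition of $E_{u+1}$ as an image, injectivity comes from recovering each component on its disjoint block $\Pi_{(\sigma_1,\ldots,\sigma_{\dimA})}$, and the norm on $E_{u+1}$ is the one from Lemma \ref{lemm:dir sum} carried across the resulting isomorphism (the paper writes this norm directly on the decomposition $f=\sum_i f_i\mathbf{1}_{I_i}$, which is the same thing). Your injectivity step is in fact spelled out more carefully than the paper's, which simply asserts that $\gamma_{\xi}(\pmb{f}-\pmb{g})=0$ forces $f_i=g_i$.
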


\begin{proof}
For any $u\in \NN$, one can check that $E_u$ is an $(A,B)$-bimodule in a way similar to the proof of Lemma \ref{lemm:E0}. By the definition of $E_u$, it is clear that $\gamma_{\xi}|_{E_u}$ is an epimorphism between two modules. Next, we show that $\gamma_{\xi}$ is injective.
To do this, take two functions $\pmb{f}=(f_1,\ldots,f_{\dimA})$ and $\pmb{g}=(g_1,\ldots,g_{\dimA})$ in $E_u$ such that $\gamma_{\xi}|_{E_u}(\pmb{f}) = \gamma_{\xi}(\pmb{f}) = \gamma_{\xi}(\pmb{g}) = \gamma_{\xi}|_{E_u}(\pmb{g})$ holds.
By (\ref{jux map}), $\gamma_{\xi}(\pmb{f})$ and $\gamma_{\xi}(\pmb{g})$ are of the forms
\[ \gamma_{\xi}(\pmb{f}) = \sum_{I_i} \mathbf{1}_{I_i}\cdot f_i(\kappa_1^{-1}(k_1), \ldots, \kappa_{\dimA}^{-1}(k_1)) \]
and
\[ \gamma_{\xi}(\pmb{g}) = \sum_{I_i} \mathbf{1}_{I_i}\cdot g_i(\kappa_1^{-1}(k_1), \ldots, \kappa_{\dimA}^{-1}(k_1)), \]
respectively. Here, $I_i \cap I_j = \varnothing$ holds for all $i\ne j$. Then we have
\[ \gamma_{\xi}(\pmb{f}-\pmb{g}) = \sum_{I_i} \mathbf{1}_{I_i}\cdot \big(f_i-g_i\big)(\kappa_1^{-1}(k_1), \ldots, \kappa_{\dimA}^{-1}(k_1)) = 0. \]
It follows that $f_i=g_i$ holds for all $(\kappa_1^{-1}(k_1), \ldots, \kappa_{\dimA}^{-1}(k_1))$,
and then we have $\pmb{f}=\pmb{g}$ as required.

Take $f \in E_0$, we have $\Ima(f) = b$ for some $b\in B$, then $\Vert f \Vert_{E_0} := \Vert b \Vert_{B,b}$ induces a norm of $f$. Then
\[\Vert\cdot\Vert_{E_0}: E_0 \to \RR^{\>=0}, ~(f:\II_A\to\{b\}) \mapsto \Vert b\Vert_{B,p}\]
is a norm defined on $E_0$. Thus, $E_0$ is a normed $(A,B)$-bimodule.
Take $f \in E_u$ ($u\>= 1$), then by the definition of $E_u$, $f$ can be written as a finite sum
\[ f = \sum_{i=1}^{2^{\dimA}} f_i \mathbf{1}_{I_i}, \]
where all functions $f_i$ lie in $E_{u-1}$ and $\II_{A} = \bigcup_i I_i$ is a disjoint union.
By Lemma \ref{lemm:dir sum}, the map \[ \Vert\cdot\Vert_{E_u}: E_u \to \RR^{\>=0},
~f \mapsto \displaystyle
\bigg(\sum_{i=1}^{2^{\dimA}}
  \bigg(
    \frac{\mu_{\II_A}(I_i)}{\mu_{\II_A}(\II_A)}
  \bigg)^p
\Vert f_i \Vert^p
\bigg)^{\frac{1}{p}}
\]
is a norm defined on $E_u$.
\end{proof}

It is clear that $E_u \subseteq E_{u+1}$ for any $u\in\NN$ by the definition of $E_u$. The following lemma shows
$E_0 \mathop{\longrightarrow}\limits^{\subseteq}
   E_1 \mathop{\longrightarrow}\limits^{\subseteq}
   E_2 \mathop{\longrightarrow}\limits^{\subseteq}
   \cdots
       \mathop{\longrightarrow}\limits^{\subseteq}
   E_u \mathop{\longrightarrow}\limits^{\subseteq}
   \cdots \subseteq \bfS_{\homo}(\II_A).$

\begin{lemma} \label{lemm:Eu subset bfS}
For any $u\in\NN$, we have $E_u \subseteq \bfS_{\homo}(\II_A)$.
\end{lemma}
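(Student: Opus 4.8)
The plan is to argue by induction on $u\in\NN$. The base case $E_0\subseteq\bfS_{\homo}(\II_A)$ is almost immediate: by \eqref{formula:E0} any $f\in E_0$ is a constant function $f:\II_A\to\{b\}$ for some $b\in B$, and writing $b = \sum_{i} f_i e_{B,i}$ in the basis $\basis{B}$ and using $\II_A = \II_A$ itself as the (trivial) partition, $f = b\,\mathbf{1}_{\II_A}$ is an elementary simple function. (One should note that $\II_A = [c,d]_A$ is itself a Cartesian product of intervals of the form (d) in the definition of elementary simple function, so this really does lie in $\bfS(\II_A)$.)

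For the inductive step, assume $E_u\subseteq\bfS_{\homo}(\II_A)$ and take $f\in E_{u+1} = \Ima(\gamma_\xi|_{E_u})$. By Lemma \ref{lemm:Eu}, $f = \gamma_\xi(\pmb f)$ for some $\pmb f = (f_{(\sigma_1,\ldots,\sigma_{\dimA})})_{(\sigma_1,\ldots,\sigma_{\dimA})\in\{c,d\}^{\times\dimA}}$ with every component $f_{(\sigma_1,\ldots,\sigma_{\dimA})}\in E_u$. By the defining formula \eqref{jux map},
\[ f = \sum_{(\sigma_1,\ldots,\sigma_{\dimA})\in\{c,d\}^{\times\dimA}} \mathbf{1}_{\Pi_{(\sigma_1,\ldots,\sigma_{\dimA})}}\cdot \big(f_{(\sigma_1,\ldots,\sigma_{\dimA})}\circ \kappa^{-1}\big), \]
where the sets $\Pi_{(\sigma_1,\ldots,\sigma_{\dimA})} = \prod_{i=1}^{\dimA}\kappa_{\sigma_i}(\II)$ are pairwise disjoint and cover $\II_A$. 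By the inductive hypothesis each $f_{(\sigma_1,\ldots,\sigma_{\dimA})}$ is an elementary simple function, say $f_{(\sigma_1,\ldots,\sigma_{\dimA})} = \sum_{j} b_j\,\mathbf{1}_{J_j}$ with the $J_j$ pairwise disjoint Cartesian products of intervals; pulling back along the order-preserving bijection $\kappa^{-1}$ (a product of the $\kappa_{\sigma_i}^{-1}$, each of which maps intervals of types (a)--(d) to intervals of the same type) and then intersecting with $\Pi_{(\sigma_1,\ldots,\sigma_{\dimA})}$ gives that $\mathbf{1}_{\Pi_{(\sigma_1,\ldots,\sigma_{\dimA})}}\cdot(f_{(\sigma_1,\ldots,\sigma_{\dimA})}\circ\kappa^{-1})$ is again an elementary simple function, supported on finitely many pairwise disjoint intervals contained in $\Pi_{(\sigma_1,\ldots,\sigma_{\dimA})}$. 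Summing over the $2^{\dimA}$ blocks, and using that the $\Pi_{(\sigma_1,\ldots,\sigma_{\dimA})}$ are themselves pairwise disjoint, all the index sets occurring across the whole sum remain pairwise disjoint, so $f\in\bfS_{\homo}(\II_A)$. This completes the induction.

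The main point to be careful about — really the only obstacle — is bookkeeping with the intervals: one must check that $\kappa_{\sigma_i}$ (equivalently $\kappa_c,\kappa_d$) carries each of the four interval shapes (a)--(d) on $[c,d]_{\FF}$ to a subinterval of $[c,\xi]_{\FF}$ or $[\xi,d]_{\FF}$ of the same shape, so that the pulled-back pieces $\kappa_{\sigma_i}(I_{ij})$ are still admissible interval factors, and that intersecting a Cartesian product of such intervals with the box $\Pi_{(\sigma_1,\ldots,\sigma_{\dimA})}$ (itself a Cartesian product of intervals) yields a Cartesian product of intervals of the allowed types. This is exactly the kind of routine verification guaranteed by the hypothesis that $\kappa_c,\kappa_d$ are order-preserving bijections, and it was essentially already used in the proof of Lemma \ref{lemm:Eu}; I would state it as an observation and not belabor it.
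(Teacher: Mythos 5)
Your argument is correct, but it takes a genuinely different route from the paper's. The paper does not induct on the explicit form of the functions at all: it invokes Lemma \ref{lemm:Eu} to identify $E_u$ with a finite direct sum of copies of $E_0\cong B$, embeds that sum into the free $(A,B)$-bimodule $B^{\oplus\basis{}}$ on the generator set $\basis{}=\{\mathbf{1}_X\mid X\subseteq\II_A\}$, and composes with the precover $\Pc\colon B^{\oplus\basis{}}\to\bfS_{\homo}(\II_A)$ to realize the inclusion $E_u\subseteq\bfS_{\homo}(\II_A)$; a by-product of that construction is the map $\Pc_{\bfS_{\homo}(\II_A)}$ reused in Lemma \ref{lemm:Pc}. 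What the structural route leaves implicit is precisely the point your induction verifies: that the support sets occurring in a function of $E_u$ are admissible index sets, i.e.\ pairwise disjoint Cartesian products of intervals of types (a)--(d) (note that $\basis{}$ contains indicators of \emph{arbitrary} subsets, which need not lie in $\bfS_{\homo}(\II_A)$), and your observation that the order-preserving bijections $\kappa_c,\kappa_d$ carry each interval type to an interval of the same type inside $[c,\xi]_{\FF}$ resp.\ $[\xi,d]_{\FF}$ supplies exactly that bookkeeping. The only caveat worth recording is that the formula (\ref{jux map}) is defined off the hyperplanes $k_i=\xi$, which is harmless since the paper identifies functions agreeing almost everywhere; with that noted, your more elementary proof is complete and, if anything, more self-contained than the paper's.
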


\begin{proof}
Let $\basis{}=\{ \mathbf{1}_X \mid X \subseteq \II_A \}$. Then $\basis{}$ is a generator set of $\bfS_{\homo}(\II_A)$, and we obtain a free precover $\Pc: B^{\oplus \basis{}} \to \bfS_{\homo}(\II_A)$, $(b_X\mathbf{1}_{X})_{X\subseteq \II_A} \mapsto \sum_{X\subseteq \II_A} b_X\mathbf{1}_{X}$ of $\bfS_{\homo}(\II_A)$.
By Lemma \ref{lemm:Eu}, we have $E_u \cong E_{u-1}^{\oplus 2^{\dimA}} \cong \cdots \cong E_0^{\oplus u2^{\dimA}}$ holds for all $u\in \NN$, and by the definition of $E_0$ (see (\ref{formula:E0})), we have $E_0\cong B$. Thus, $E_u \cong B^{\oplus u2^{\dimA}}$.
On the other hand, $E_u \subseteq B^{\oplus \basis{}}$, then there exists an embedding $\mathrm{emb}: B^{\oplus u2^{\dimA}} \hookrightarrow B^{\oplus\basis{}}$ induced by $B^{\oplus u2^{\dimA}} \cong E_u \subseteq B^{\oplus\basis{}}$. Thus, we obtain an $(A,B)$-homomorphism
\[ \Pc_{\bfS_{\homo}(\II_A)} := \Pc\compos\mathrm{emb} :
\xymatrix{ B^{\oplus u2^{\dimA}} \ar@{^(->}[r]^{\mathrm{emb}} &  B^{\oplus\basis{}} \ar[r]^{\Pc} & \bfS_{\homo}(\II_A) } \]
which admits $E_u \subseteq \bfS_{\homo}(\II_A)$.
\end{proof}

\subsubsection{\texorpdfstring{$\w{\bfS_{\homo}(\II_A)} \cong \protect\underrightarrow{\lim} E_u$}{hat S(IA) = lim Eu}}

Let ${_A\Nor_B}$ be the category of normed $(A,B)$-bimodules and $(A,B)$-homomorphism between them. By Lemmas \ref{lemm:bfS}, \ref{lemm:E0}, and \ref{lemm:Eu}, we get that $\bfS_{\homo}(\II_A)$, $\w{\bfS_{\homo}(\II_A)}$, and all $\FF$-vector spaces $E_u$ ($u\in\NN$) are $(A,B)$-bimodules. Let ${_A\Ban_B}$ be the category of Banach $(A,B)$-bimodules and $(A,B)$-homomorphism between them.
Then ${_A\Ban_B}$ is a full subcategory of ${_A\Nor_B}$ and $\w{\bfS_{\homo}(\II_A)}$ is an object in ${_A\Ban_B}$.
Now, consider all $(A,B)$-homomorphisms $\varphi_{ij}: E_i \to E_j$ ($i \=< j$) which are given by $E_i \subseteq E_j$, we obtain a direct system $((E_i)_{i\in\NN}, (\varphi_{uv})_{u\=< v})$. The following result provide a description of the completion $\w{\bfS_{\homo}(\II_A)}$ of $\bfS_{\homo}(\II_A)$ by using this direct system.

\begin{lemma} \label{lemm:limits}
Assume $\FF$ is completed and let $(\alpha_i : E_i \to \w{\bfS_{\homo}(\II_A)})_{i\in\NN}$ be a family of $(A,B)$-homomorphisms given by $E_i \subseteq \w{\bfS_{\homo}(\II_A)}$.
Then, in the the sense of $(\alpha_i)_{i\in\NN}$ to be insertion morphisms, the inductive limit of the direct system $((E_i)_{i\in\NN}, (\alpha_{uv})_{u\=< v})$ in ${_A\Ban_B}$ is isomorphic to $\w{\bfS_{\homo}(\II_A)}$, i.e.,
\[  \w{\bfS_{\homo}(\II_A)} \cong \dirlim~E_u. \]
Furthermore, $\w{\bfS_{\homo}(\II_A)}$ is a normed $(A,B)$-bimodule whose norm $\Vert\cdot\Vert_{\w{\bfS_{\homo}(\II_A)}}$ is naturally induced by the norm $\Vert\cdot\Vert_{E_u}$ of $E_u$, i.e.,
\[ \Vert\cdot\Vert_{\w{\bfS_{\homo}(\II_A)}} \simeq \dirlim~\Vert\cdot\Vert_{E_u} \]
\end{lemma}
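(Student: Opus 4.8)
\ The idea is to realise $\dirlim E_u$ concretely as the increasing union $\bigcup_{u\in\NN}E_u$, to complete it, and to identify the result with $\w{\bfS_{\homo}(\II_A)}$ by a density argument; the only real work is the density.

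By Lemmas \ref{lemm:E0}, \ref{lemm:Eu} and \ref{lemm:Eu subset bfS} the spaces $E_u$ form a chain $E_0\subseteq E_1\subseteq\cdots\subseteq\bfS_{\homo}(\II_A)$ of normed $(A,B)$-sub-bimodules, each $E_u\cong B^{\oplus u2^{\dimA}}$, and one checks directly (it is already visible for $E_0$ and $E_1$) that $\Vert f\Vert_p=\mu_{\II_A}(\II_A)\,\Vert f\Vert_{E_u}$ for $f\in E_u$, so every transition map $\alpha_{uv}:E_u\hookrightarrow E_v$ is an isometry up to the fixed scalar $\mu_{\II_A}(\II_A)$. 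Hence the union $\bigcup_u E_u$, equipped with the well-defined norm $\Vert x\Vert:=\Vert x\Vert_{E_u}$ (any $u$ with $x\in E_u$) and the restricted $(A,B)$-action, satisfies the universal property of the inductive limit of $((E_i)_{i\in\NN},(\alpha_{uv})_{u\=< v})$ in ${_A\Nor_B}$: any cocone $(\beta_u:E_u\to M)_u$ glues to a unique $\beta:\bigcup_u E_u\to M$, and $\beta$ is automatically bounded because $\Vert\beta_u\Vert=\Vert\beta_{u+1}\circ\alpha_{u,u+1}\Vert\=<\Vert\beta_{u+1}\Vert$ is non-increasing in $u$. Thus $\bigcup_u E_u\cong\dirlim E_u$ in ${_A\Nor_B}$. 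Since $\FF$ is complete and $B$ is finite dimensional over $\FF$, each $E_u\cong B^{\oplus u2^{\dimA}}$ is already a Banach $(A,B)$-bimodule, so the system lives in ${_A\Ban_B}$; the completion functor ${_A\Nor_B}\to{_A\Ban_B}$ (the actions extend by continuity because \ref{N2} makes them bounded) is left adjoint to the fully faithful inclusion ${_A\Ban_B}\hookrightarrow{_A\Nor_B}$, and a left adjoint preserves colimits, so the inductive limit of the system in ${_A\Ban_B}$ is the completion of $\bigcup_u E_u$ (alternatively, one checks this universal property by hand).

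It therefore remains to prove that $\bigcup_u E_u$ is dense in $\bfS_{\homo}(\II_A)$. By $\FF$-linearity and the explicit form of $\Vert\cdot\Vert_p$ in Proposition \ref{prop:normed bfS} it suffices to approximate a single term $b\mathbf{1}_I$ with $I=\prod_{j=1}^{\dimA} I_j$, $I_j$ an interval in $\II=[c,d]_{\FF}$. Call a box \emph{dyadic of level $u$} if each of its sides is one of the $2^u$ intervals $\kappa_{\sigma_1}\kappa_{\sigma_2}\cdots\kappa_{\sigma_u}(\II)$; the indicator of a finite union of level-$u$ dyadic boxes is constant on every level-$u$ dyadic box, hence lies in $E_u$. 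Taking for each side $I_j$ the union $J_j$ of all level-$u$ dyadic intervals contained in it, the box $J=\prod_j J_j$ is such a finite union, $J\subseteq I$, and $\Vert b\mathbf{1}_I-b\mathbf{1}_J\Vert_p=\Vert b\Vert_{B,p}\,\mu_{\II_A}(I\backslash J)$, so it is enough to know that $\mu_{\II_A}(I\backslash J)\to0$ as $u\to\infty$. \textbf{This is the crux of the argument:} it amounts to the endpoints of the level-$u$ intervals becoming dense in $\II$ and the mesh of the level-$u$ partition tending to $0$, which must be extracted from the hypotheses on $\xi\in(c,d)_{\FF}$ and the order-preserving bijections $\kappa_c:[c,d]_{\FF}\to[c,\xi]_{\FF}$, $\kappa_d:[c,d]_{\FF}\to[\xi,d]_{\FF}$ (for instance, $\kappa_c,\kappa_d$ continuous for the order topology, $\kappa_c([c,d]_{\FF})\cup\kappa_d([c,d]_{\FF})=[c,d]_{\FF}$ overlapping only at $\xi$, and $\mu_{\FF}$ of full support with $\mu_{\FF}$ of a point equal to $0$), exactly mirroring the density of the dyadic rationals in $[0,1]$; I would isolate it as a short lemma on the iterated subdivision generated by $\kappa_c,\kappa_d$. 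Granting this, $\bigcup_u E_u$ and $\bfS_{\homo}(\II_A)$ have the same completion, whence
\[ \dirlim E_u\ \cong\ \w{\textstyle\bigcup_u E_u}\ \cong\ \w{\bfS_{\homo}(\II_A)}, \]
and under this isomorphism the homomorphisms $\alpha_i:E_i\to\w{\bfS_{\homo}(\II_A)}$ of the statement are exactly the insertion morphisms $E_i\hookrightarrow\bigcup_u E_u\hookrightarrow\w{\bfS_{\homo}(\II_A)}$.

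For the last assertion, the norm of the Banach bimodule $\w{\bfS_{\homo}(\II_A)}$ is, by construction of the completion, the unique continuous extension of the norm on the dense sub-bimodule $\bigcup_u E_u$; on each $E_u$ that norm equals $\mu_{\II_A}(\II_A)\,\Vert\cdot\Vert_{E_u}$, i.e. it is precisely the colimit norm $\dirlim\Vert\cdot\Vert_{E_u}$, and uniqueness of the continuous extension on a dense subspace gives $\Vert\cdot\Vert_{\w{\bfS_{\homo}(\II_A)}}\simeq\dirlim\Vert\cdot\Vert_{E_u}$, as claimed.
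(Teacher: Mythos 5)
Your proposal reaches the same conclusion by a genuinely different route. The paper verifies the universal property of $\dirlim E_u$ directly: given a compatible family $(f_i:E_i\to X)$ into a Banach bimodule, it defines $\theta(x)=\invlim f_{u(t)}(x_t)$ for a Cauchy sequence $\{x_t\}\subseteq\bigcup_i E_i$ converging to $x$, then checks well-definedness and uniqueness via injectivity of the insertions. You instead identify the colimit in ${_A\Nor_B}$ concretely as the union $\bigcup_u E_u$ with its compatible norm, and then pass to ${_A\Ban_B}$ by the completion functor being a left adjoint; this is cleaner and makes explicit where completeness of $\FF$ and finite-dimensionality of the $E_u$ enter. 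The step you isolate as the crux --- density of $\bigcup_u E_u$ in $\bfS_{\homo}(\II_A)$, i.e.\ that the iterated $\kappa_c,\kappa_d$-subdivision has mesh tending to $0$ so that an arbitrary box $I$ can be approximated in measure by unions of level-$u$ dyadic boxes --- is precisely the point the paper's proof assumes silently when it asserts that every $x\in\w{\bfS_{\homo}(\II_A)}$ is the limit of a Cauchy sequence taken from $\bigcup_i E_i$ rather than from all of $\bfS_{\homo}(\II_A)$; your honesty in flagging it is a strength rather than a defect relative to the paper, though a complete write-up would need the short subdivision lemma you describe. Two smaller points: your inequality $\Vert\beta_u\Vert=\Vert\beta_{u+1}\circ\alpha_{u,u+1}\Vert\=<\Vert\beta_{u+1}\Vert$ shows the operator norms are non-\emph{decreasing}, not non-increasing, so boundedness of the glued map $\beta$ on $\bigcup_u E_u$ (hence its extension to the completion) is not automatic and requires $\sup_u\Vert\beta_u\Vert<\infty$; this uniform-boundedness issue is equally unaddressed in the paper's definition of $\theta(x)=\invlim f_{u(t)}(x_t)$, whose existence in $X$ needs $\{f_{u(t)}(x_t)\}$ to be Cauchy. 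Neither gap invalidates your strategy, but both would have to be patched (e.g.\ by working with contractions or by imposing boundedness uniformly) in a fully rigorous version of either proof.
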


\begin{proof}
Let $X$ a Banach $(A,B)$-bimodule in ${_A\Ban_B}$ such that there is a family $(f_i: E_i \to X)_{i\in\NN}$ of $(A,B)$-homomorphism satisfying $f_i = f_j \alpha_{ij}$ for all $i\=< j$. Now, we define $\theta: \w{\bfS_{\homo}(\II_A)} \to X$ in the following way.

For any $x \in \w{\bfS_{\homo}(\II_A)}$, there exists a Cauchy sequence $\{x_t\}_{t\in\NN}$ in $\bigcup_{i\in\NN} E_i$ such that $\{\Vert x_t - x \Vert\}_{t\in \NN}$ is a monotonically decreasing Cauchy sequence in $\RR^{\>= 0}$ with $\underleftarrow{\lim} \Vert x_t - x \Vert = 0$. It follows that $\underleftarrow{\lim} x_t = x$.
Notice that each $x_t$ must lie in some $(A,B)$-bimodule $E_{u(t)}$ ($u(t)\in\NN$, and, clearly, $x_t \in E_u$ holds for all $u\>= u(t)$), then $x_t$ has a preimage $x_t'$ given by $\alpha_{u(t)}$. Define
\[ \theta(x) = \underleftarrow{\lim} f_{u(t)}(x_t), \]
and let $f: \bigcup_{i\in\NN} E_i \to X$ be the $(A,B)$-homomorphism induced by the direct system $((E_i)_{i\in\NN}, (\alpha_{uv})_{u\=< v})$, we immediately obtain
\[ \theta(x) = \underleftarrow{\lim} f|_{E_{u(t)}}(x_t) = \underleftarrow{\lim} f(x_t). \]
Then one can check that $\theta$ is well-defined since the projective limit is unique.
For each $j\>= i$, consider the following diagram, we have $\alpha_{ij}\alpha_j = \alpha_i$ and $f_j\alpha_{ij} = f|_{E_j}\alpha_{ij} = f_i = f|_{E_i}$.
\[\begin{tikzpicture}
\draw[rotate=15+  0][<-][line width=1pt] (2,0) arc(0:90:2) [red][dashed];
\draw[rotate=15+120][<-][line width=1pt] (2,0) arc(0:90:2);
\draw[rotate=15+240][->][line width=1pt] (2,0) arc(0:90:2);
\draw (2,0) node{$X$};
\draw[rotate=120] (2,0) node{$\w{\bfS_{\homo}(\II_A)}$};
\draw[rotate=240] (2,0) node{$E_j$};
\draw (0,0) node{$E_i$};
\draw[->][line width=1pt] (0.5,0) to (1.5,0);
\draw[rotate=120][->][line width=1pt] (0.5,0) to (1.5,0);
\draw[rotate=240][->][line width=1pt] (0.5,0) to (1.5,0);
\draw[rotate=60+  0] (2.4,0) node[red]{$\exists~\theta$};
\draw[rotate=60+120] (2.4,0) node{$\alpha_j$};
\draw[rotate=60+240] (2.4,0) node{$f_j$};
\draw[rotate= 0+  0] (1.0,0.3) node{$f_i$};
\draw[rotate= 0+120] (1.0,0.3) node{$\alpha_i$};
\draw[rotate= 0+120] (1.0,-0.3) node{$\subseteq$};
\draw[rotate= 0+240] (1.0,0.3) node{$\alpha_{ij}$};
\draw[rotate= 0+235] (1.0,-0.3) node{{$(i\=< j) \atop  \subseteq$}};
\end{tikzpicture}\]

We need show that $\theta$ is unique. To do this, assume that there is an $(A,B)$-homomorphism $\vartheta: \w{\bfS_{\homo}(\II_A)} \to X$ such that $f_i = \vartheta\alpha_i$ and $f_j=\vartheta\alpha_j$ holds for all $i \=< j$.
Then we have $\alpha_{u(t)}\theta(x_t) = f_{u(t)}(x_t) = \alpha_{u(t)}\vartheta(x_t)$, and then
\[ \alpha_{u(t)}(\theta(x_t)-\vartheta(x_t)) = 0. \]
Since all $\alpha_i$ are injective, we obtain $\theta(x_t)=\vartheta(x_t)$.
Furthermore, all $\FF$-linear maps are continuous by using the completion of $\w{\bfS_{\homo}(\II_A)}$, we have $\underleftarrow{\lim}\theta(x_t)=\underleftarrow{\lim}\vartheta(x_t)$, i.e., $\theta(x) = \vartheta(x)$ holds for all $x\in \w{\bfS_{\homo}(\II_A)}$.
Naturally, the formula $\Vert\cdot\Vert_{\w{\bfS_{\homo}(\II_A)}} \simeq \dirlim~\Vert\cdot\Vert_{E_u}$ can be induced by $\w{\bfS_{\homo}(\II_A)} \cong \dirlim~E_u$.
\end{proof}

\subsection{\texorpdfstring{Triples $(\bfS_{\homo}(\II_A), \mathbf{1}_{\II_A}, \gamma_{\xi})$
and $(\w{\bfS_{\homo}(\II_A)}, \mathbf{1}_{\II_A}, \w{\gamma}_{\xi})$}{Triples
(S(IA),1,gamma) and (hat S(IA),1 , hat gamma)}}

We will consider two triples $(\bfS_{\homo}(\II_A), \mathbf{1}_{\II_A}, \gamma_{\xi})$ and $(\w{\bfS_{\homo}(\II_A)}, \mathbf{1}_{\II_A}, \w{\gamma}_{\xi})$ in this subsection, which are important objects in $\scrN_{\homo}^p$.

\subsubsection{\texorpdfstring{$(\bfS_{\homo}(\II_A), \mathbf{1}_{\II_A}, \gamma_{\xi})$ as an object in $\scrN_{\homo}^p$}{(S(IA),1,gamma) as an object in Np}}

Let $\bfS_{\homo}(\II_A)^{\oplus_p 2^{\dimA}} = \bfS^{\oplus}$ and $\mathbf{1}=\mathbf{1}_A: A \to \{1_B\}$.
Recall the definition of $\gamma_{\xi}: \Func(\II_A)^{\oplus_p 2^{\dimA}} \to \Func(\II_A)$, it induce two maps $\gamma_{\xi}|_{\bfS^{\oplus}}: \bfS^{\oplus} \to \bfS_{\homo}(\II_A)$ and $\w{\gamma}_{\xi}|_{\bfS^{\oplus}}: \w{\bfS}^{\oplus} \to \w{\bfS_{\homo}(\II_A)}$, where the map $\w{\gamma}_{\xi}$ is obtained by the completion of $\gamma_{\xi}$. For simplicity, we do not differentiate between the notations $\gamma_{\xi}|_{\bfS}$ and $\gamma_{\xi}$ in this paper.

\begin{lemma} \label{lemm:Pc}
There is an $(A,B)$-homomorphism
$\Pc_{\bfS_{\homo}(\II_A)}: B^{\times I} \to \bfS_{\homo}(\II_A)$
sending $(1_B)_{1\times I}$ to $\mathbf{1}_{\II_A}$.
\end{lemma}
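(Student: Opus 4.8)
The plan is to construct $\Pc_{\bfS_{\homo}(\II_A)}$ by hand out of a finite partition of $\II_A$. First I would fix a finite family $\{X_i\}_{i\in I}$ of pairwise disjoint subsets $X_i\subseteq\II_A$, each of the admissible product form $X_i=I_{i,1}\times\cdots\times I_{i,\dimA}$ occurring in the definition of elementary simple functions, with $\II_A=\bigcup_{i\in I}X_i$; the trivial choice $I=\{\ast\}$, $X_\ast=\II_A$ already does the job, so one need not worry that $B^{\times I}$ is a full Cartesian product rather than a direct sum (any finer finite partition is equally admissible if one prefers). Then I would define
\[ \Pc_{\bfS_{\homo}(\II_A)}: B^{\times I}\longrightarrow\bfS_{\homo}(\II_A),\qquad (b_i)_{i\in I}\longmapsto\sum_{i\in I}b_i\mathbf{1}_{X_i}. \]
Since $I$ is finite and the $X_i$ are pairwise disjoint, the right-hand side is a genuine element of $\bfS_{\homo}(\II_A)$ (in the notation of Proposition \ref{prop:normed bfS}), and the map is visibly additive in $(b_i)_{i\in I}$.

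Next I would check that $\Pc_{\bfS_{\homo}(\II_A)}$ is an $(A,B)$-homomorphism, i.e.\ a morphism in ${_A}\Modcat_B$, using the action formulas recorded in Lemma \ref{lemm:bfS}. The left $A$-module structure on $B^{\times I}$ is the componentwise action $a.(b_i)_i=(\homo(a)b_i)_i$, so $\Pc_{\bfS_{\homo}(\II_A)}\bigl(a.(b_i)_i\bigr)=\sum_i\homo(a)b_i\mathbf{1}_{X_i}$; on the other hand $a.\Pc_{\bfS_{\homo}(\II_A)}\bigl((b_i)_i\bigr)$ is the function $x\mapsto\homo(a)\bigl(\sum_i b_i\mathbf{1}_{X_i}(x)\bigr)=\sum_i\homo(a)b_i\mathbf{1}_{X_i}(x)$, and the two coincide. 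The right $B$-action is handled identically via $f.b'\colon x\mapsto f(x)b'$ and associativity of the right action on $\bfS_{\homo}(\II_A)$: both $\Pc_{\bfS_{\homo}(\II_A)}\bigl((b_ib')_i\bigr)$ and $\Pc_{\bfS_{\homo}(\II_A)}\bigl((b_i)_i\bigr).b'$ equal $\sum_i b_ib'\mathbf{1}_{X_i}$. Hence $\Pc_{\bfS_{\homo}(\II_A)}$ is a morphism in ${_A}\Modcat_B$.

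Finally I would evaluate at the constant tuple: $\Pc_{\bfS_{\homo}(\II_A)}\bigl((1_B)_{1\times I}\bigr)=\sum_{i\in I}1_B\mathbf{1}_{X_i}$. Because $\{X_i\}_{i\in I}$ is a partition of $\II_A$, at any point $x\in\II_A$ exactly one summand contributes $1_B$ and all others contribute $0_B$, while at any $x\notin\II_A$ every summand vanishes; therefore $\sum_i 1_B\mathbf{1}_{X_i}=\mathbf{1}_{\II_A}$, which is precisely the asserted equality. There is no serious obstacle in this argument; the only point requiring a moment's care is the choice of $I$ — one must choose the $X_i$ so that they \emph{partition} $\II_A$ (rather than using an arbitrary generating family of indicator functions), since otherwise the constant tuple would be sent to a positive multiple of $\mathbf{1}_{\II_A}$ instead of to $\mathbf{1}_{\II_A}$ itself.
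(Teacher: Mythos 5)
Your construction is correct, and at bottom it produces the same map as the paper: a tuple $(b_i)_{i\in I}$ is sent to $\sum_i b_i\mathbf{1}_{X_i}$ for a finite partition $\{X_i\}$ of $\II_A$ into admissible product sets, and $(1_B)_{1\times I}\mapsto\mathbf{1}_{\II_A}$ because the $X_i$ partition $\II_A$. The difference is one of packaging. The paper derives the lemma as a corollary of Lemma \ref{lemm:Eu subset bfS}: it takes the free precover $\Pc\colon B^{\oplus\basis{}}\to\bfS_{\homo}(\II_A)$, $(b_X\mathbf{1}_X)_X\mapsto\sum_X b_X\mathbf{1}_X$, precomposes with the embedding $B^{\oplus u2^{\dimA}}\cong E_u\hookrightarrow B^{\oplus\basis{}}$, and uses the fact that $\mathbf{1}_{\II_A}$ is represented in $E_u$ as $\sum_i\mathbf{1}_{I_i}$ over the disjoint union $\II_A=\bigcup_i I_i$. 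You instead choose the partition yourself (even the trivial one $I=\{\ast\}$, $X_\ast=\II_A$), write down the map explicitly, and verify the left $A$- and right $B$-equivariance directly from the action formulas of Lemma \ref{lemm:bfS}. Your route is more elementary and self-contained — it does not lean on the chain of identifications $E_u\cong E_0^{\oplus u2^{\dimA}}\cong B^{\oplus u2^{\dimA}}$, which the paper states somewhat loosely — while the paper's version buys consistency with the specific index set $u2^{\dimA}$ and the $E_u$-filtration that is reused in Proposition \ref{prop:existence}. Your closing remark, that $I$ must index a partition of $\II_A$ rather than an arbitrary generating family, is exactly the right point of care and is implicitly what the paper's choice of $E_u$ guarantees.
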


\begin{proof}
This is a direct corollary of Lemma \ref{lemm:Eu subset bfS}. To be more precise, we have $\mathbf{1}_{\II_A} \in E_0^{\oplus u2^{\dimA}} \cong B^{\oplus u2^{\dimA}} \cong E_u \subseteq B^{\oplus \basis{}}$, and it can be seen as a finite sum which has the following form
\[ \mathbf{1}_{\II_A} = \sum_i \mathbf{1}_{I_i}, \text{~where~} I_i \cap I_j =\varnothing~(\forall i\ne j), \bigcup_i I_i = \II_A. \]
Thus, the composition $\Pc_{\bfS_{\homo}(\II_A)} = \Pc~\mathrm{emb}$ given in the proof of Lemma \ref{lemm:Eu subset bfS} sends $(1_B)_{1\times u2^{\dimA}} \in \mathbf{1}_{\II_A}$ to the function $\mathbf{1}_{\II_A} \in \bfS_{\homo}(\II_A)$.
\end{proof}

\begin{proposition} \label{prop:bfS lies in Np}
The triple $(\bfS_{\homo}(\II_A), \mathbf{1}, \gamma_{\xi})$ is an object in $\scrN_{\homo}^p$.
\end{proposition}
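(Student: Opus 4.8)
The plan is to verify, in turn, the three defining conditions \ref{Np1}, \ref{Np2}, \ref{Np3} of Definition \ref{def:Np} for the triple $(\bfS_{\homo}(\II_A),\mathbf{1},\gamma_{\xi})$; the first two amount to collecting facts already proved, and only the third requires genuine work. Condition \ref{Np1} is immediate: Lemma \ref{lemm:bfS} gives that $\bfS_{\homo}(\II_A)$ is an $(A,B)$-bimodule, and Proposition \ref{prop:normed bfS} gives that $\Vert\cdot\Vert_p$ is a $\homo$-norm on it. For \ref{Np2}, the required $(A,B)$-homomorphism $\Pc_{\bfS_{\homo}(\II_A)}: B^{\times I}\to\bfS_{\homo}(\II_A)$ with $\Pc_{\bfS_{\homo}(\II_A)}((1_B)_{1\times I})=\mathbf{1}_{\II_A}$ is precisely Lemma \ref{lemm:Pc}. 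It remains to check the size bound $\Vert\mathbf{1}_{\II_A}\Vert_p\leq\mu_{\II_A}(\II_A)$: viewing $\mathbf{1}_{\II_A}=1_B\mathbf{1}_{\II_A}$ as an elementary simple function with the single block $I_1=\II_A$ gives $\Vert\mathbf{1}_{\II_A}\Vert_p=\Vert 1_B\Vert_{B,p}\,\mu_{\II_A}(\II_A)$, and for any disjoint decomposition $\mathbf{1}_{\II_A}=\sum_i\mathbf{1}_{I_i}$ one gets $\Vert 1_B\Vert_{B,p}\,(\sum_i\mu_{\II_A}(I_i)^p)^{1/p}\leq\Vert 1_B\Vert_{B,p}\sum_i\mu_{\II_A}(I_i)$ since $t\mapsto t^p$ is superadditive on $\RR^{\geq 0}$ for $p\geq 1$; hence the bound holds once $\norm_B$ is chosen with $\Vert 1_B\Vert_{B,p}\leq 1$, which we may always arrange.

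For condition \ref{Np3} I would make four checks. First, $\gamma_{\xi}$ does map $\bfS^{\oplus}:=\bfS_{\homo}(\II_A)^{\oplus_p 2^{\dimA}}$ into $\bfS_{\homo}(\II_A)$: if each component $f_{\sigma}=\sum_i b_{\sigma,i}\mathbf{1}_{I_{\sigma,i}}$ is elementary simple, then by (\ref{jux map}), writing $\sigma=(\sigma_1,\ldots,\sigma_{\dimA})$, one has $\gamma_{\xi}(\pmb{f})=\sum_{\sigma}\mathbf{1}_{\Pi_{\sigma}}\cdot\big(f_{\sigma}\circ(\kappa_{\sigma_1}^{-1},\ldots,\kappa_{\sigma_{\dimA}}^{-1})\big)$, and since each order-preserving bijection $\kappa_{\sigma_j}$ carries an interval of $\II$ to an interval, this equals $\sum_{\sigma}\sum_i b_{\sigma,i}\mathbf{1}_{\kappa_{\sigma}(I_{\sigma,i})}$ where $\kappa_{\sigma}(I_{\sigma,i})\subseteq\Pi_{\sigma}$ are pairwise disjoint products of intervals; so $\gamma_{\xi}(\pmb{f})\in\bfS_{\homo}(\II_A)$. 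Second, $\gamma_{\xi}$ is $\FF$-linear, being built from compositions with fixed maps and multiplications by fixed characteristic functions, linearly in $\pmb{f}$. Third, $\gamma_{\xi}$ is an $(A,B)$-homomorphism: the left $A$-action and right $B$-action on $\bfS_{\homo}(\II_A)$ are pointwise multiplication by $\homo(a)$ and by $b$, which commute with the piecewise reparametrisation defining $\gamma_{\xi}$, so $\gamma_{\xi}(a.\pmb{f}.b)=a.\gamma_{\xi}(\pmb{f}).b$, exactly as in the proof of Lemma \ref{lemm:bfS}. Fourth, taking $f_{\sigma}=\mathbf{1}_{\II_A}$ for every $\sigma$ yields $\gamma_{\xi}\big((\mathbf{1})_{1\times 2^{\dimA}}\big)=\sum_{\sigma}\mathbf{1}_{\Pi_{\sigma}}=\mathbf{1}_{\bigcup_{\sigma}\Pi_{\sigma}}=\mathbf{1}_{\II_A}$.

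The remaining and principal point is boundedness of $\gamma_{\xi}$, and the plan is to prove the sharper statement that $\gamma_{\xi}:\bfS^{\oplus}\to\bfS_{\homo}(\II_A)$ is isometric (this also re-proves the isometric part of Lemma \ref{lemm:Eu}). From the description above, $\Vert\gamma_{\xi}(\pmb{f})\Vert_p^p=\sum_{\sigma}\sum_i\Vert b_{\sigma,i}\Vert_{B,p}^p\,\mu_{\II_A}(\kappa_{\sigma}(I_{\sigma,i}))^p$. The key identity is that $\kappa_c$ and $\kappa_d$ rescale $\mu_{\FF}$ by the constant factors $\rho_c:=\mu_{\FF}(\kappa_c(\II))/\mu_{\FF}(\II)$ and $\rho_d:=\mu_{\FF}(\kappa_d(\II))/\mu_{\FF}(\II)$; then the product structure of $\II_A\simeq\II^{\times\dimA}$ and of $\mu_{\II_A}$ gives $\mu_{\II_A}(\kappa_{\sigma}(J))=(\prod_j\rho_{\sigma_j})\,\mu_{\II_A}(J)=\frac{\mu_{\II_A}(\Pi_{\sigma})}{\mu_{\II_A}(\II_A)}\mu_{\II_A}(J)$ for every block $J$, since $\mu_{\II_A}(\Pi_{\sigma})=\prod_j\mu_{\FF}(\kappa_{\sigma_j}(\II))=(\prod_j\rho_{\sigma_j})\,\mu_{\II_A}(\II_A)$. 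Substituting, the $\sigma$-block contributes $\big(\mu_{\II_A}(\Pi_{\sigma})/\mu_{\II_A}(\II_A)\big)^p\Vert f_{\sigma}\Vert_p^p$, and summing over $\sigma\in\{c,d\}^{\times\dimA}$ yields $\Vert\gamma_{\xi}(\pmb{f})\Vert_p^p=\sum_{\sigma}\big(\mu_{\II_A}(\Pi_{\sigma})/\mu_{\II_A}(\II_A)\big)^p\Vert f_{\sigma}\Vert_p^p$, which is exactly $\Vert\pmb{f}\Vert_{\bfS^{\oplus}}^p$ for the $\oplus_p$-norm supplied by Lemma \ref{lemm:dir sum} relative to the partition $\II_A=\bigcup_{\sigma}\Pi_{\sigma}$. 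In particular $\gamma_{\xi}$ is bounded, so the final clause of \ref{Np3} about Cauchy sequences in the completion follows automatically. The delicate hypothesis worth isolating is precisely the measure-compatibility $\mu_{\II_A}(\kappa_{\sigma}(J))=\frac{\mu_{\II_A}(\Pi_{\sigma})}{\mu_{\II_A}(\II_A)}\mu_{\II_A}(J)$ of the gluing maps $\kappa_c,\kappa_d$ — the abstract counterpart of the affine maps $x\mapsto x/2$ and $x\mapsto(x+1)/2$ used in \cite{Lei2023FA} — and this is where the freedom in the choice of $\xi$ and the regularity of $\mu_{\FF}$ enter; I expect this to be the main obstacle, the rest being routine.
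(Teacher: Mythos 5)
Your proof is correct in outline and handles \ref{Np1} and \ref{Np2} exactly as the paper does (Lemma \ref{lemm:bfS}, Proposition \ref{prop:normed bfS}, Lemma \ref{lemm:Pc}; you are in fact slightly more careful than the paper about the normalisation $\Vert 1_B\Vert_{B,p}=1$ needed for the bound $\Vert\mathbf{1}_{\II_A}\Vert_p\=<\mu_{\II_A}(\II_A)$). For \ref{Np3}, however, you take a genuinely different route. The paper does not verify boundedness of $\gamma_{\xi}$ by direct computation on $\bfS_{\homo}(\II_A)$; instead it invokes Lemma \ref{lemm:Eu} (that each $\gamma_{\xi}|_{E_u}$ is an isomorphism $E_u^{\oplus_p 2^{\dimA}}\cong E_{u+1}$) together with the inductive-limit description $\w{\bfS_{\homo}(\II_A)}\cong\dirlim E_u$ of Lemma \ref{lemm:limits} to conclude that $\w{\gamma}_{\xi}$ is an $(A,B)$-isomorphism, and then deduces the commutation with projective limits from that, reducing each $\w{\gamma}_{\xi}(\pmb{f}_t)$ to some $\gamma_{\xi}|_{E_{u(t)}}(\pmb{f}_t)$. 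Your argument instead works directly with elementary simple functions: you check that $\gamma_{\xi}$ preserves $\bfS_{\homo}(\II_A)$, is an $(A,B)$-homomorphism fixing $\mathbf{1}_{\II_A}$, and is an isometry for the $\oplus_p$-norm of Lemma \ref{lemm:dir sum}, whence bounded. What your approach buys is an explicit norm identity and a proof of boundedness that the paper's argument leaves implicit (being a module isomorphism does not by itself give continuity); what it costs is the extra hypothesis you correctly isolate, namely that $\kappa_c$ and $\kappa_d$ rescale $\mu_{\FF}$ by the constant factors $\mu_{\II_A}(\Pi_{\sigma})/\mu_{\II_A}(\II_A)$ — but this is not really an extra cost, since the paper's Lemma \ref{lemm:Eu} needs the same compatibility to make $\gamma_{\xi}|_{E_u}$ an isometric isomorphism for the norms of Lemma \ref{lemm:dir sum}. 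Both proofs therefore rest on the same unstated measure-compatibility of the gluing maps; yours has the merit of making it visible.
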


\begin{proof}
First of all, by Lemma \ref{lemm:bfS} and Proposition \ref{prop:normed bfS}, we obtain that $\bfS_{\homo}(\II_A)$ is a $\homo$-normed $(A,B)$-bimodule. Thus, \ref{Np1} holds.
Second, by the $\homo$-norm $\Vert\cdot\Vert_p$ defined on $\bfS_{\homo}(\II_A)$ (see Proposition \ref{prop:normed bfS}), we have $\Vert \mathbf{1} \Vert_p = \Vert \mathbf{1}_{\II_A} \Vert_p = (\mu_{\II_A}(\II_A)^p)^{\frac{1}{p}} = \mu_{\II_A}(\II_A)$. In addition, Lemma \ref{lemm:Pc} provides an $(A,B)$-homomorphism $\Pc_{\bfS_{\homo}(\II_A)}: B^{\times I} \to \bfS_{\homo}(\II_A)$ sending $(1_B)_{1\times I}$ to $\mathbf{1}_{\II_A}$. Thus, we have \ref{Np2}.

Next, we prove \ref{Np3}. For any Cauchy sequence $\{\pmb{f}_t\}_{t\in\NN}$ in $\w{\bfS}^{\oplus}$, we need prove $\w{\gamma}_{\xi}(\invlim \pmb{f}_t) = \invlim \w{\gamma}_{\xi}(\pmb{f}_t)$ in this proof.
Here, $\w{\gamma}_{\xi}$ is an $(A,B)$-homomorphism $\w{\bfS}^{\oplus} \to \w{\bfS_{\homo}(\II_A)}$ induced by the completion of $\bfS_{\homo}(\II_A)$.
By Lemma \ref{lemm:Eu}, for each $u\in\NN$, $\gamma_{\xi}|_{E_u}$ is an $(A,B)$-isomorphism, then, by Lemma \ref{lemm:limits}, $\w{\gamma}_{\xi}$ is also an $(A,B)$-isomorphism. Therefore, we have
\[ \gamma_{\xi}(\invlim \pmb{f}_t)
\mathop{=}\limits^{\spadesuit} \w{\gamma}_{\xi}(\invlim \pmb{f}_t)
\mathop{=}\limits^{\clubsuit} \invlim \w{\gamma}_{\xi}(\pmb{f}_t)
\mathop{=}\limits^{\heart} \invlim \gamma_{\xi}(\pmb{f}_t)  \]
as required, where $\spadesuit$ is given by $\gamma_{\xi}: \w{\bfS}^{\oplus} \to \w{\bfS_{\homo}(\II_A)}$ being a restriction of $\w{\gamma_{\xi}}|_{\bfS_{\homo}(\II_A)}$,
$\clubsuit$ is given by $\w{\gamma}_{\xi}$ is an isomorphism,
and $\heart$ holds since there is an integer $u(t)\in\NN$ with $\w{\gamma}_{\xi}(\pmb{f}_t) = \gamma_{\xi}|_{E_u(t)}(\pmb{f}_t) = \gamma_{\xi}(\pmb{f}_t)$.
\end{proof}

\subsubsection{\texorpdfstring{$(\w{\bfS_{\homo}(\II_A)}, \mathbf{1}_{\II_A}, \w{\gamma}_{\xi})$ as an object in $\scrA_{\homo}^p$}{(hat S(IA), 1, hat gamma) as an object in Ap} }

Proposition \ref{prop:bfS lies in Np} shows that $(\bfS_{\homo}(\II_A), \mathbf{1}_{\II_A}, \gamma_{\xi})$ is an object in $\scrN^p_{\homo}$. Then the completion $\w{\bfS_{\homo}(\II_A)}$ of $\bfS_{\homo}(\II_A)$ induced a new triple $(\w{\bfS_{\homo}(\II_A)}, \mathbf{1}_{\II_A}, \w{\gamma}_{\xi})$ is also an object in $\scrN^p_{\homo}$. Recall the definition of $\scrA_{\homo}^p$ (see Definition \ref{def:Ban mod cat}), it is clear that $(\w{\bfS_{\homo}(\II_A)}, \mathbf{1}_{\II_A}, \w{\gamma}_{\xi})$ is also an object in $\scrA_{\homo}^p$.
In this paper, we want to know if it is an initial object in $\scrA_{\homo}^p$. Thus, we need consider the existence of homomorphism from $(\w{\bfS_{\homo}(\II_A)}, \mathbf{1}_{\II_A}, \w{\gamma}_{\xi})$ and the uniqueness of this homomorphism.

\begin{proposition} \label{prop:existence}
For any object $(N,v,\delta)$ in $\scrA^p$, we have
\[ \Hom_{\scrA^p_{\homo}}(
  (\w{\bfS_{\homo}(\II_A)}, \mathbf{1}_{\II_A}, \w{\gamma}_{\xi}),
  (N,v,\delta)
  ) \ne \varnothing. \]
\end{proposition}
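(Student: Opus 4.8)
The plan is to construct an explicit morphism from the initial object $(\w{\bfS_{\homo}(\II_A)},\mathbf 1_{\II_A},\w\gamma_\xi)$ to an arbitrary object $(N,v,\delta)$ of $\scrA^p_{\homo}$, and to do this in two stages: first define a map on the uncompleted object $\bfS_{\homo}(\II_A)$ (equivalently, on the directed union $\bigcup_u E_u$ of Lemma~\ref{lemm:Eu subset bfS}), then extend by continuity using completeness of $N$. First I would use the chain $E_0\subseteq E_1\subseteq\cdots\subseteq\bfS_{\homo}(\II_A)$ together with the isomorphisms $\gamma_\xi|_{E_u}\colon E_u^{\oplus_p 2^{\dimA}}\xrightarrow{\ \sim\ }E_{u+1}$ (Lemma~\ref{lemm:Eu}) and the isomorphism $E_0\cong B$ (Lemma~\ref{lemm:E0}) to define $h$ level by level: on $E_0\cong B$ put $h_0\colon E_0\to N$ to be $b\mapsto v.b$ — this is forced by \ref{Np2}, since the $(A,B)$-homomorphism $\Pc\colon B^{\times I}\to N$ with $\Pc((1_B)_I)=v$ restricted to the diagonal copy $B\hookrightarrow B^{\times I}$ gives exactly $b\mapsto v.b$, and \ref{Np2} for the source sends $(1_B)_I$ to $\mathbf 1_{\II_A}$, so $h_0(\mathbf 1_{\II_A})=v$. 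Then define $h_{u+1}$ on $E_{u+1}$ by the rule $h_{u+1}\circ\gamma_\xi|_{E_u} = \delta\circ h_u^{\oplus 2^{\dimA}}$; this makes sense because $\gamma_\xi|_{E_u}$ is an isomorphism, and the $h_u$ are automatically compatible with the inclusions $E_u\subseteq E_{u+1}$ because $\gamma_\xi$ restricted to the ``constant-in-the-last-coordinate'' diagonal reproduces the inclusion (this is the same observation that gives $E_u\subseteq E_{u+1}$). The compatibility $\delta((v)_{1\times 2^{\dimA}})=v$ from \ref{Np3} is precisely what guarantees $h_{u+1}$ restricts to $h_u$ on $E_u$, so the $h_u$ glue to an $(A,B)$-homomorphism $h\colon\bigcup_u E_u\to N$, i.e.\ $h\colon\bfS_{\homo}(\II_A)\to N$, with $h(\mathbf 1_{\II_A})=v$ and $h\circ\gamma_\xi=\delta\circ h^{\oplus 2^{\dimA}}$ by construction.

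Next I would check that $h$ is bounded, hence uniformly continuous, on $\bfS_{\homo}(\II_A)$ with respect to the norms $\Vert\cdot\Vert_p$ and $\Vert\cdot\Vert_N$. On $E_0$ boundedness is immediate: $\Vert h_0(f)\Vert_N=\Vert v.b\Vert_N\le\Vert v\Vert_N\Vert b\Vert_{B,p}$ by \ref{N2} (with the trivial $A$-action contribution), and $\Vert v\Vert_N\le\mu_{\II_A}(\II_A)$ by \ref{Np2}. For the inductive step, the norm on $E_{u+1}=E_u^{\oplus_p 2^{\dimA}}$ is the weighted $\ell^p$-norm of Lemma~\ref{lemm:dir sum}, and $\delta$ is bounded by \ref{Np3}; so if $C_u$ is a bound for $h_u$ then $\Vert\delta\Vert\cdot C_u$ bounds $h_{u+1}$ — but one must be careful that the constants do not blow up, and here I would use that the norms on $E_u$ are normalized by $\mu_{\II_A}(\II_i)/\mu_{\II_A}(\II_A)$ so that the juxtaposition isometrically identifies $E_u^{\oplus_p 2^{\dimA}}$ with $E_{u+1}$ (the total weight is $1$, as in the proof of Lemma~\ref{lemm:dir sum}), making $h\mapsto\Vert\delta\Vert$-Lipschitz uniformly in $u$. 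Once $h$ is bounded on the dense subspace $\bfS_{\homo}(\II_A)$ and $N$ is complete (it is an object of $\scrA^p_{\homo}$, hence Banach by Definition~\ref{def:Ban mod cat}), the universal property of completion — concretely, Lemma~\ref{lemm:limits} identifying $\w{\bfS_{\homo}(\II_A)}\cong\dirlim E_u$ in ${_A\Ban_B}$ — produces a unique bounded $(A,B)$-homomorphism $\w h\colon\w{\bfS_{\homo}(\II_A)}\to N$ extending $h$. Finally I would verify that $\w h$ is a morphism in $\scrA^p_{\homo}$: $\w h(\mathbf 1_{\II_A})=v$ holds since $\mathbf 1_{\II_A}\in\bfS_{\homo}(\II_A)$ already and $\w h$ extends $h$; and $\w h\circ\w\gamma_\xi=\delta\circ\w h^{\oplus 2^{\dimA}}$ follows from the corresponding identity for $h$ by continuity, using that $\w\gamma_\xi$ is the completion of $\gamma_\xi$ and that $\delta$ commutes with projective limits (again \ref{Np3}), so both sides are continuous maps agreeing on a dense subset. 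This exhibits an element of $\Hom_{\scrA^p_{\homo}}\bigl((\w{\bfS_{\homo}(\II_A)},\mathbf 1_{\II_A},\w\gamma_\xi),(N,v,\delta)\bigr)$, proving nonemptiness.

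The main obstacle I anticipate is the well-definedness and compatibility of the inductive construction of $h$ on the $E_u$ — specifically verifying that the two a priori different ways of seeing an element of $E_u$ as living in $E_{u+1}$ (directly via $E_u\subseteq E_{u+1}$, versus as $\gamma_\xi$ applied to a tuple one of whose ``halves'' reconstructs it) are genuinely reconciled by the hypothesis $\delta((v)_{1\times 2^{\dimA}})=v$, so that no inconsistency arises when gluing. A secondary technical point is the uniform boundedness of the $h_u$: one needs the normalization in Lemma~\ref{lemm:dir sum} to be exactly such that $\gamma_\xi|_{E_u}$ is norm-nonincreasing (or an isometry), otherwise the extension to the completion could fail; I expect this to come out of tracking the measure-ratio weights carefully, but it is the place where the argument is most likely to need a lemma not yet isolated in the text. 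Uniqueness of $\w h$ is not needed for this proposition (only nonemptiness), so I would defer it, though it follows from density of $\bfS_{\homo}(\II_A)$ plus the fact that any morphism is determined on $E_0$ by $h(\mathbf 1_{\II_A})=v$ together with \ref{N2}, and then on all $E_u$ by \ref{H2}.
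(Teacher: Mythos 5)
Your proposal is correct and follows essentially the same route as the paper: define $\theta_0$ on $E_0\cong B$ from the map $\Pc$ of \ref{Np2} (which is indeed $b\mapsto v.b$), recurse via $\theta_{u+1}=\delta\circ\theta_u^{\oplus 2^{\dimA}}\circ\gamma_\xi|_{E_{u+1}}^{-1}$, pass to the inductive limit $\w{\bfS_{\homo}(\II_A)}\cong\dirlim E_u$ of Lemma~\ref{lemm:limits}, and use the uniform bound $\Vert\theta_u\Vert=\Vert\delta\Vert$ to transfer \ref{H1} and \ref{H2} to the completion. The compatibility issue you flag as the main obstacle (that $\theta_{u+1}|_{E_u}=\theta_u$, resolved by $\delta((v)_{1\times 2^{\dimA}})=v$ together with $(A,B)$-linearity of $\delta$ and induction) is in fact glossed over in the paper's proof, so your treatment is, if anything, slightly more careful on that point.
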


\begin{proof}
For each $(N,v,\delta)$ in $\scrA_{\homo}^p$, since there is an $(A,B)$-homomorphism $\theta: B^{\times I} \to N$ with $\theta((1_B)_{1\times I}) = v$, then, by using the isomorphism $\eta: B \mathop{\to}\limits^{\cong} E_0$ given in Lemma \ref{lemm:E0}, the $(A,B)$-homomorphism $h: E_0 \to B^{\times I}$, $x\mapsto (\eta^{-1}(x))_{1\times I}$ induces a composition
\[ h \eta : \xymatrix{
B \ar[r]^{\eta}_{\cong} & E_0 \ar[r]^{h} & B^{\times I}
}\]
sending $1_B$ to $h \eta(1_B) = (\eta^{-1}(\eta(1_B)))_{1\times I} = (1_B)_{1\times I}$. Thus, we have a composition $\tilde{\theta}_0: = \theta h \eta : B \to N$, which is an $(A,B)$-homomorphism satisfying $\tilde{\theta}_0 (1_B) = v$.
Now, for each $u\in \NN$, we define $\theta_u$ as follows:
\begin{itemize}
  \item[(1)]
    $\theta_0: E_0 \to N$ is defined as $\theta_0 := \tilde{\theta}_0\eta^{-1} = \theta h$.
    Here, the element $\eta(1_B)$ in $E_0$ is written as $1$ (in this notation, we have $\theta h(1) = \theta h \eta(1_B)= \theta((1_B)_{1\times I}) = v$) and, up to isomorphism, we do not differential between $B$ and $E_0$ for simplicity.
  \item[(2)]
    $\theta_{u+1}$ is induced by $\theta_u$ through the composition
    \[ \theta_{u+1}:= \delta \compos \theta_u^{\oplus_p 2^{\dimA}}\compos \gamma_{\xi}|_{E_{u+1}}^{-1}  \ :
    \xymatrix@C=1.5cm{
      E_{u+1} \ar[r]^{\gamma_{\xi}|_{E_{u+1}}^{-1}}_{ {\cong \atop \text{see Lemma \ref{lemm:Eu}}} }
    & E_u^{\oplus_p 2^{\dimA}} \ar[r]^{\theta_u^{\oplus_p 2^{\dimA}}}
    & N^{\oplus_p 2^{\dimA}} \ar[r]^{\delta}
    & N
    }. \]
\end{itemize}
Then
\begin{align*}
\theta_{u+1}(\mathbf{1}_{\II_A}|_{E_{u+1}})
& = \delta \compos \theta_{u}^{\oplus_p 2^{\dimA}}  \compos \gamma_{\xi}|_{E_{u+1}}^{-1} (\mathbf{1}_{\II_A}|_{E_{u+1}})
  = \delta( \theta_{u}^{\oplus_p 2^{\dimA}} (\mathbf{1}_{\II_A}|_{E_u})_{1\times 2^{\dimA}})  \\
& = \delta( (\theta_{u}(\mathbf{1}_{\II_A}|_{E_u}))_{1\times 2^{\dimA}})
\end{align*}
In the case of $n=1$, the above equation admits
\begin{align*}
 \theta_{1}(\mathbf{1}_{\II_A}|_{E_{1}})
& = \delta( (\theta_{0}(\mathbf{1}_{\II_A}|_{E_0}))_{1\times 2^{\dimA}})
  = \delta( (\theta_{0}\eta(1_B))_{1\times 2^{\dimA}}) \\
& = \delta( (\theta_{0}(1))_{1\times 2^{\dimA}})
  = \delta( (\theta h(1))_{1\times 2^{\dimA}})
  = \delta( (v)_{1\times 2^{\dimA}}) = v,
\end{align*}
and, for any $k\in\NN$ with $\theta_{k}(\mathbf{1}_{\II_A}|_{E_{k}}) = v$, we have
\begin{align*}
  \theta_{k+1}(\mathbf{1}_{\II_A}|_{E_{k+1}})
& = \delta( (\theta_{k}(\mathbf{1}_{\II_A}|_{E_k}))_{1\times 2^{\dimA}}) \\
& = \delta( (v)_{1\times 2^{\dimA}}) = v.
\end{align*}
Therefore, we have
\begin{align}\label{formula: lemm-existence}
  \theta_{u+1}(\mathbf{1}_{\II_A}|_{E_{u+1}}) = v
\end{align}
for all $u\in\NN$ by induction.

Consider the maps $\alpha_i : E_i \to \dirlim E_t$ and $\alpha_{ij}: E_i \to E_j$ ($i,j\in\NN$ and $i\=< j$) induced by $E_i \subseteq E_j \subseteq \dirlim E_t$ (see Lemmas \ref{lemm:Eu subset bfS}), we have that the diagram shown in \Pic \ref{fig:existence} commutes,
where $\theta_{\lim}: \dirlim E_t \to N$ is given by the inductive limit $\dirlim E_t$ of the direct system $((E_u)_{u\in \NN}, (\alpha_{uv})_{u\=< v})$.
\begin{figure}[htbp]
  \centering
\begin{tikzpicture}
\draw[rotate=15+  0][<-][line width=1pt] (2,0) arc(0:90:2) [red][dashed];
\draw[rotate=15+120][<-][line width=1pt] (2,0) arc(0:90:2);
\draw[rotate=15+240][->][line width=1pt] (2,0) arc(0:90:2);
\draw (2,0) node{$N$};
\draw[rotate=120] (2,0) node{$\dirlim E_t$};
\draw[rotate=240] (2,0) node{$E_{u+1}$};
\draw (0,0) node{$E_u$};
\draw[->][line width=1pt] (0.5,0) to (1.5,0);
\draw[rotate=120][->][line width=1pt] (0.5,0) to (1.5,0);
\draw[rotate=240][->][line width=1pt] (0.5,0) to (1.5,0);
\draw[rotate=60+  0] (2.4,0) node[red]{$\exists~\theta_{\lim}$};
\draw[rotate=60+120] (2.4,0) node{$\alpha_{u+1}$};
\draw[rotate=60+240] (2.4,0) node{$\theta_{u+1}$};
\draw[rotate= 0+  0] (1.0,0.3) node{$\theta_u$};
\draw[rotate= 0+120] (1.0,0.3) node{$\alpha_u$};
\draw[rotate= 0+120] (1.0,-0.3) node{$\subseteq$};
\draw[rotate= 0+255] (1.0,0.3) node{$\alpha_{u~u+1}$};
\draw[rotate= 0+235] (1.0,-0.3) node{{$\subseteq$}};
\end{tikzpicture}
\caption{The existence of $(A,B)$-homomorphism $(\w{\bfS_{\homo}(\II_A)}, \mathbf{1}_{\II_A}, \w{\gamma}_{\xi}) \to (N,v,\delta)$.}
\label{fig:existence}
\end{figure}
By Lemma \ref{lemm:limits}, we have $\rho: \w{\bfS_{\homo}(\II_A)} \mathop{\to}\limits^{\cong} \dirlim E_t $. Thus, we obtain an $(A,B)$-homomorphism $\tilde{\theta} := \theta_{\lim}\compos\rho: \w{\bfS_{\homo}(\II_A)} \to N$.

We need show that $\tilde{\theta}$ is a morphism in $\scrN_{\homo}^p$.
On the one hand, up to the isomorphism $\rho$, \ref{H1} holds since the following formulas
\[ \tilde{\theta}(\mathbf{1}_{\II_A})
= \invlim~\theta_{\lim}|_{E_t}(\mathbf{1}_{\II_A}|_{E_t})
= \invlim~\theta_{\lim}|_{E_t}(\alpha_t(\mathbf{1}_{\II_A}|_{E_t}))
= \invlim~\theta_t(\mathbf{1}_{\II_A}|_{E_t})
\mathop{=\!=}\limits^{(\ref{formula: lemm-existence})}
  \invlim v = v. \]
On the other hand, let $E_u^{\oplus} := E_u^{\oplus_p 2^{\dimA}}$ and $N^{\oplus} := N^{\oplus_p 2^{\dimA}}$.
For each $\pmb{f}=(f_1,\ldots, f_{2^{\dimA}})\in \w{\bfS}$,
it can be seen as the projective limit $\underleftarrow{\lim}\pmb{f}_i$ of a sequence $\{\pmb{f}_i=(f_{1i},\ldots, f_{2^{\dimA}i})\}_{i\in\NN}$ in $\bigcup_{u\in\NN} E_u^{\oplus}$,
where $f_{ji} \in E_{u_i}$ ($1\=< j\=< 2^{\dimA}$), $u_i \in \NN$, such that for any $i\=< j$, we have $u_i \=< u_j$.
Thus, naturally, we need to consider the following diagram up to the isomorphism $\rho$:
\[\xymatrix@C=2cm@R=2cm{
  E_{u_i}^{\oplus}
  \ar[r]^{\gamma_{\xi}|_{E_{u_i}^{\oplus}}}_{\cong}
  \ar@{^(->}[d]_{e_{u_i}^{\oplus 2^{\dimA}}}
  \ar@/_6pc/[dd]_{\theta_{u_i}^{\oplus 2^{\dimA}}}
& E_{u_i+1}
  \ar@{^(->}[d]^{e_{u_i+1}}
  \ar@/^6pc/[dd]^{\theta_{u_i}}
\\
  \w{\bfS}^{\oplus}
  \ar[r]^{\w{\gamma}_{\xi}}
  \ar[d]_{\tilde{\theta}^{\oplus 2^{\dimA}}}
& \w{\bfS_{\homo}(\II_{A})}
  \ar[d]^{\tilde{\theta}}
\\
  N^{\oplus}
  \ar[r]_{\delta}
& N,
}\]
where, for each $t\in \NN$, $e_t := \rho\alpha_t$ is an embedding. We have the following equation
\begin{align}
    \tilde{\theta}(\w{\gamma}_{\xi}(\pmb{f}))
&  = \invlim~\tilde{\theta}(\w{\gamma}_{\xi}( e^{\oplus 2^{\dimA}}_{u_i}(\pmb{f}_i)))
    && \nonumber \\
& = \invlim~\tilde{\theta}(e_{u_i+1}(\gamma_{\xi}\big|_{E^{\oplus_p 2^{\dimA}}}(\pmb{f}_i)))
    &&
        ( 
        \w{\gamma}_{\xi} e^{\oplus 2^{\dimA}}_{u_i} = e_{u_i+1} \gamma_{\xi}\big|_{E^{\oplus}} ) \nonumber \\
& = \invlim~\theta_{u_i}(\gamma_{\xi}\big|_{E^{\oplus}}(\pmb{f}_i))
    &&
        (
        \tilde{\theta} e_{u_i+1} =\theta_{u_i}) \nonumber \\
& = \invlim~\delta(\theta_{u_i}^{\oplus 2^{\dimA}}(\pmb{f}_i))
    &&
        (
        \theta_{u_i} \gamma_{\xi}\big|_{E^{\oplus} } = \delta\theta_{u_i}^{\oplus 2^{\dimA}}) \nonumber \\
& = \invlim~\delta(\tilde{\theta}^{\oplus 2^{\dimA}}(e^{\oplus 2^{\dimA}}_{u_i}(\pmb{f}_i)))
    &&
        (
        \theta_u^{\oplus 2^{\dimA}} = \tilde{\theta}^{\oplus 2^{\dimA}}e^{\oplus 2^{\dimA}}_{u_i}) \nonumber \\
& = \delta(\invlim~\tilde{\theta}^{\oplus 2^{\dimA}}(e^{\oplus 2^{\dimA}}_{u_i+1}(\pmb{f}_i)))
    && \text{\ref{Np3}}. \label{formula: lemm-existence 250701}
\end{align}
Notice that the definition of $\{E_u\}_{u\in\NN}$ provide a disjoint union $\II_A = \bigcup\limits_{i=1}^{\dimA} \II_i$ of $\II_A$, this union admits that each function $g$ in $E_{u+1}$ is a sequence $(g_j: \II_j \to B)_{1\=< j \=< 2^{\dimA}}$ which can be seen as an element lying in $E_{u-1}^{\oplus}$, and then, for the case of $u=1$, $g$ is of the form
\[ g = \sum_{j=1}^{2^{\dimA}} b_i \mathbf{1}_{\II_j}. \]
Thus, up to the isomorphism $\eta: B \mathop{\to}\limits^{\cong} E_0$ given in Lemma \ref{lemm:E0}, one can check that the norm of $\theta_1$ is
\begin{align*}
  \Vert \theta_1 \Vert
& = \sup_{\Vert g\Vert_{E_1} =1} \Vert\theta_1(g)\Vert_N \\
& = \sup_{\sum_{i=1}^{d_A} b_i\mu(\II_i)=1} \Vert \delta ((\theta_0(b_i))_{1\times 2^{\dimA}}) \Vert_N
  = \Vert \delta \Vert,
\end{align*}
and then one can prove that $\Vert\theta_t\Vert = \Vert\delta\Vert$ holds for all $t\in \NN$ by induction, and so, $\Vert \tilde{\theta} \Vert = \Vert\theta_{\lim}\Vert = \Vert\delta\Vert$ holds since $\theta_{\lim}$ is given by the projective limit of the direct system $((E_u)_{u\in \NN}, (\alpha_{uv})_{u\=< v})$.
Thus, $\tilde{\theta} = \theta\compos\rho$ is a bounded $\FF$-linear map, and so is $\tilde{\theta}^{\oplus 2^{\dimA}}$.
Then \[\invlim~\tilde{\theta}^{\oplus 2^{\dimA}}(e^{\oplus 2^{\dimA}}_{u_i+1}(\pmb{f}_i))
 = \tilde{\theta}^{\oplus 2^{\dimA}}(\invlim e^{\oplus 2^{\dimA}}_{u_i+1}(\pmb{f}_i))
 = \tilde{\theta}^{\oplus 2^{\dimA}}(\invlim \pmb{f}_i)
 = \tilde{\theta}^{\oplus 2^{\dimA}}(\pmb{f}).\]
It follows that (\ref{formula: lemm-existence 250701}) admits $\tilde{\theta}\w{\gamma}_{\xi} = \delta\tilde{\theta}^{\oplus 2^{\dimA}}$, i.e., \ref{H2} holds.
\end{proof}

\begin{proposition} \label{prop:uniq}
For any object $(N,v,\delta)$ in $\scrA^p$, if $\Hom_{\scrA^p_{\homo}}(
  (\w{\bfS_{\homo}(\II_A)}, \mathbf{1}_{\II_A}, \w{\gamma}_{\xi}),
  (N,v,\delta)$ contains at least one morphism, then
\[ \sharp\Hom_{\scrA^p_{\homo}}(
  (\w{\bfS_{\homo}(\II_A)}, \mathbf{1}_{\II_A}, \w{\gamma}_{\xi}),
  (N,v,\delta)
  ) = 1. \]
\end{proposition}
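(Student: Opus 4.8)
The plan is to show that any two morphisms $\theta_1,\theta_2\in\Hom_{\scrA^p_{\homo}}\big((\w{\bfS_{\homo}(\II_A)},\mathbf 1_{\II_A},\w{\gamma}_{\xi}),(N,v,\delta)\big)$ coincide; the idea is that the recursion used in the proof of Proposition~\ref{prop:existence} to construct a morphism in fact \emph{forces} the value of any morphism on each $E_u$, hence on the dense subspace $\bigcup_{u\in\NN}E_u$ of $\w{\bfS_{\homo}(\II_A)}$, and continuity then pins it down everywhere. For the base case, recall from Lemma~\ref{lemm:E0} that $E_0\cong B$ as $(A,B)$-bimodules with the constant function $\mathbf 1_{\II_A}$ corresponding to $1_B$; since $B=1_B.B$ is generated by $1_B$ under the right $B$-action, an $(A,B)$-homomorphism out of $E_0$ is determined by the image of $\mathbf 1_{\II_A}$. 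Condition~\ref{H1} forces $\theta_k(\mathbf 1_{\II_A})=v$ for $k=1,2$, so $\theta_1|_{E_0}=\theta_2|_{E_0}$ (both send the element $f\leftrightarrow b$ to $v.b$), and by~\ref{N2} this common restriction is bounded.

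For the inductive step, assume $\theta_1|_{E_u}=\theta_2|_{E_u}=:\vartheta_u$. By Lemma~\ref{lemm:Eu} the map $\gamma_{\xi}|_{E_u}\colon E_u^{\oplus_p 2^{\dimA}}\to E_{u+1}$ is an $(A,B)$-isomorphism, and it is the restriction of $\w{\gamma}_{\xi}$ (as used in the proof of Proposition~\ref{prop:bfS lies in Np}). Writing an arbitrary $f\in E_{u+1}$ as $f=\gamma_{\xi}(\pmb g)$ with $\pmb g\in E_u^{\oplus_p 2^{\dimA}}$, condition~\ref{H2} gives
\[
\theta_k(f)=\theta_k\big(\w{\gamma}_{\xi}(\pmb g)\big)=\delta\big(\theta_k^{\oplus 2^{\dimA}}(\pmb g)\big)=\delta\big(\vartheta_u^{\oplus 2^{\dimA}}(\pmb g)\big),
\]
which is independent of $k$; hence $\theta_1|_{E_{u+1}}=\theta_2|_{E_{u+1}}$, again bounded as a composition of bounded maps. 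In fact these restrictions are precisely the maps $\theta_u$ built in the proof of Proposition~\ref{prop:existence} (once one knows $\theta_0$ sends $1$ to $v$), so the norm identity $\Vert\theta_k|_{E_u}\Vert=\Vert\delta\Vert$ holds uniformly in $u$, exactly as computed there. By induction, $\theta_1$ and $\theta_2$ agree on every $E_u$, hence on $\bigcup_{u\in\NN}E_u$.

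Finally, Lemma~\ref{lemm:limits} identifies $\w{\bfS_{\homo}(\II_A)}$ with $\dirlim E_u$, so every $x\in\w{\bfS_{\homo}(\II_A)}$ is a projective limit $\invlim x_t$ of a Cauchy sequence $\{x_t\}$ in $\bigcup_{u\in\NN}E_u$. The uniform bound $\Vert\theta_k|_{E_u}\Vert=\Vert\delta\Vert$ shows that each $\theta_k$ is the unique continuous extension of its restriction to the dense subspace $\bigcup_u E_u$, so $\theta_k(x)=\invlim\theta_k(x_t)$; since $\theta_1(x_t)=\theta_2(x_t)$ for all $t$, we conclude $\theta_1(x)=\theta_2(x)$. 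Thus $\theta_1=\theta_2$, proving $\sharp\Hom_{\scrA^p_{\homo}}\big((\w{\bfS_{\homo}(\II_A)},\mathbf 1_{\II_A},\w{\gamma}_{\xi}),(N,v,\delta)\big)=1$.

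The step I expect to be the main obstacle is the last one, the passage to the limit: it rests on knowing that an arbitrary morphism of $\scrA^p_{\homo}$ is continuous, hence determined by its values on the dense subspace $\bigcup_u E_u$. This is exactly where the uniform norm estimate $\Vert\theta_k|_{E_u}\Vert=\Vert\delta\Vert$ is needed, which in turn relies on the recursion constraining the restrictions and on $\w{\gamma}_{\xi}$ restricting to the isometries $\gamma_{\xi}|_{E_u}$ so that the recursion does not inflate norms.
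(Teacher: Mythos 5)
Your proof is correct and follows essentially the same route as the paper's: induction on the filtration $E_0\subseteq E_1\subseteq\cdots$, with \ref{H1} (plus $(A,B)$-linearity on $E_0\cong B$) forcing the base case, and \ref{H2} together with the isomorphism $\gamma_{\xi}|_{E_u^{\oplus_p 2^{\dimA}}}\colon E_u^{\oplus_p 2^{\dimA}}\to E_{u+1}$ of Lemma \ref{lemm:Eu} forcing the inductive step. The only cosmetic difference is the final passage to the completion: the paper concludes via the uniqueness clause of the direct-limit universal property (both $h-h'$ and $0$ extend the zero maps on the $E_u$, hence coincide by Lemma \ref{lemm:limits}), whereas you argue by density of $\bigcup_u E_u$ together with the uniform bound $\Vert\theta_k|_{E_u}\Vert=\Vert\delta\Vert$ — two phrasings of the same continuity fact.
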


\begin{proof}
\def\itLamb{A}
\def\id{\mathrm{id}}
Keep the notation from the proof of Proposition \ref{prop:existence}.
Assume $\Hom_{\scrA^p_{\homo}}(
  (\w{\bfS_{\homo}(\II_A)}$, $\mathbf{1}_{\II_A}, \w{\gamma}_{\xi}),
  (N,v,\delta)
  )$ contains two morphism $h$ and $h'$. Then the square
\[\xymatrix@C=1.5cm{
  E_u^{\oplus}
  \ar[r]^{\gamma_{\xi}|_{E_u^{\oplus}}}_{\cong}
  \ar[d]_{(h|_{E_u}-h'|_{E_u})^{\oplus 2^{\dimA}}}
& E_{u+1}
  \ar[d]^{h|_{E_{u+1}}-h'|_{E_{u+1}}} \\
  N^{\oplus}
  \ar[r]_{\delta}
& N
}\]
commutes for all $u\in \NN$, and then for any $f\in E_{u+1}$, we have
\[ (h|_{E_{u+1}}-h'|_{E_{u+1}})(f)
= (\delta\compos (h|_{E_u}-h'|_{E_u})^{\oplus 2^{\dimA}}\compos(\gamma_{\xi}|_{E_u^{\oplus}})^{-1})(f).\]
Thus, $h|_{E_{u+1}}-h'|_{E_{u+1}}$ is determined by $h|_{E_u}-h'|_{E_u}$.
If $u=0$, then \[(h|_{E_0}-h'|_{E_0})(k\mathbf{1}_{\II_A}|_{E_0})
= k(h|_{E_0}(\mathbf{1}_{\II_A}|_{E_0})-h'|_{E_0}(\mathbf{1}_{\II_A}|_{E_0})) = k(v-v)=0, \]
it follows $h|_{E_0}=h'|_{E_0}$. Therefore, one can prove that $h|_{E_u}=h'|_{E_u}$ for all $u\in\NN$ by induction.

The direct system $\big((E_i)_{i\in\NN}, (e_{ij} :E_i \mathop{\to}\limits^{\subseteq} E_j)_{i\=< j}\big)$ provides a commutative diagram shown in \Pic \ref{fig:uniq} for all $i \=< j$,
\begin{figure}[htbp]
  \centering
\begin{tikzpicture}[scale=1.0]
\draw[rotate=15+  0][<-][line width=1pt] (2,0) arc(0:90:2) [red][dashed];
\draw[rotate=15+120][<-][line width=1pt] (2,0) arc(0:90:2);
\draw[rotate=15+240][->][line width=1pt] (2,0) arc(0:90:2);
\draw (2,0) node{$N$};
\draw[rotate=120] (2,0) node{$\w{\bfS_{\homo}(\II_A)}$};
\draw[rotate=240] (2,0) node{$E_{j}$};
\draw (0,0) node{$E_i$};
\draw[->][line width=1pt] (0.5,0) to (1.5,0);
\draw[rotate=120][->][line width=1pt] (0.5,0) to (1.5,0);
\draw[rotate=240][->][line width=1pt] (0.5,0) to (1.5,0);
\draw[rotate=60+  0] (2.4,0) node[red]{$\phi$};
\draw[rotate=60+120] (2.4,0) node{$e_j$};
\draw[rotate=60+120] (2.0,0) node[right]{$\subseteq$};
\draw[rotate=60+250] (2.4,0) node[right]{${}_{h|_{E_j}-h'|_{E_j}~(=0)}$};
\draw[rotate= 0+  0] (1.0,0.3) node{\tiny$h|_{E_i}-h'|_{E_i}$};
\draw[rotate= 0+  0] (1.0,-0.3) node{\tiny$(=0)$};
\draw[rotate= 0+120] (1.0,0.3) node{$e_i$};
\draw[rotate= 0+120] (1.0,-0.3) node{$\subseteq$};
\draw[rotate= 0+255] (1.0,0.3) node{$e_{ij}$};
\draw[rotate= 0+235] (1.0,-0.3) node{{$\subseteq$}};
\end{tikzpicture}
\caption{The uniqueness of $(A,B)$-homomorphism $(\w{\bfS_{\homo}(\II_A)}, \mathbf{1}_{\II_A}, \w{\gamma}_{\xi}) \to (N,v,\delta)$.}
\label{fig:uniq}
\end{figure}
where $\phi: \w{\bfS_{\homo}(\II_A)} \to N$ is obtained by $\dirlim E_i \cong \w{\bfS_{\homo}(\II_A)}$.
Since $(h-h')\compos e_{ij} = h|_{E_i}-h'|_{E_j}$, we know that the case for $\phi=h-h'$ makes the above diagram commute. Moreover, the case for $\phi=0$ makes the above diagram commute. Thus, we obtain $h-h'=0$ and $h=h'$.
\end{proof}

By Propositions \ref{prop:existence} and \ref{prop:uniq}, we obtain the following result, which is the first main result of this paper.

\begin{theorem} \label{thm:1}
The triple $(\w{\bfS_{\homo}(\II_A)}, \mathbf{1}_{\II_A}, \w{\gamma}_{\xi})$ is an initial object in $\scrA^p_{\homo}$.
\end{theorem}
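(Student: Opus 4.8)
The plan is to derive the theorem as a short assembly of the two preceding propositions, once it is recorded that the triple under consideration is a bona fide object of $\scrA^p_{\homo}$. First I would observe that, by construction, $\w{\bfS_{\homo}(\II_A)}$ is the completion of the $\homo$-normed $(A,B)$-bimodule $\bfS_{\homo}(\II_A)$ and is therefore a Banach $(A,B)$-module; combining this with Proposition \ref{prop:bfS lies in Np} and the fact that passing to the completion carries $\mathbf{1}_{\II_A}$ and the juxtaposition map $\gamma_{\xi}$ to $\mathbf{1}_{\II_A}$ and $\w{\gamma}_{\xi}$, one sees that $(\w{\bfS_{\homo}(\II_A)}, \mathbf{1}_{\II_A}, \w{\gamma}_{\xi})$ still satisfies \ref{Np1}, \ref{Np2}, \ref{Np3} and has complete underlying bimodule, hence lies in $\scrA^p_{\homo}$ by Definition \ref{def:Ban mod cat}. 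This is exactly what was noted in the paragraph preceding the theorem, so it can be invoked rather than reproved.

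Next I would recall that an object $C$ of a category is \emph{initial} precisely when $\Hom(C,D)$ is a singleton for every object $D$. Accordingly, fix an arbitrary object $(N,v,\delta)$ of $\scrA^p_{\homo}$. Proposition \ref{prop:existence} shows that $\Hom_{\scrA^p_{\homo}}\bigl((\w{\bfS_{\homo}(\II_A)}, \mathbf{1}_{\II_A}, \w{\gamma}_{\xi}),(N,v,\delta)\bigr)$ is nonempty, and Proposition \ref{prop:uniq} shows that, being nonempty, this hom-set has cardinality $1$. Hence $\sharp\Hom_{\scrA^p_{\homo}}\bigl((\w{\bfS_{\homo}(\II_A)}, \mathbf{1}_{\II_A}, \w{\gamma}_{\xi}),(N,v,\delta)\bigr)=1$ for every $(N,v,\delta)$ in $\scrA^p_{\homo}$, which is precisely the assertion that $(\w{\bfS_{\homo}(\II_A)}, \mathbf{1}_{\II_A}, \w{\gamma}_{\xi})$ is an initial object.

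The real content is thus already contained in Propositions \ref{prop:existence} and \ref{prop:uniq}: the first builds a morphism by induction along $E_0\subseteq E_1\subseteq\cdots$, transports it through the isomorphism $\w{\bfS_{\homo}(\II_A)}\cong\dirlim E_u$ of Lemma \ref{lemm:limits}, and verifies \ref{H1}, \ref{H2} together with boundedness (the norm estimate $\Vert\tilde\theta\Vert=\Vert\delta\Vert$); the second runs an analogous induction to kill the difference of two morphisms and then uses the universal property of $\dirlim E_u$. Consequently the only genuine care required in the theorem's proof itself is bookkeeping — that the test is against \emph{every} object of $\scrA^p_{\homo}$ rather than just the $E_u$-layers, and that $p\ge 1$ is fixed throughout so that $\scrA^p_{\homo}$ is the full subcategory of $\scrN^p_{\homo}$ on complete objects — and I expect no further obstacle beyond what those two propositions have already resolved.
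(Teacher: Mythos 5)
Your proposal matches the paper's argument exactly: the paper records (in the paragraph preceding the theorem) that $(\w{\bfS_{\homo}(\II_A)}, \mathbf{1}_{\II_A}, \w{\gamma}_{\xi})$ is an object of $\scrA^p_{\homo}$, and then obtains the theorem directly by combining Proposition \ref{prop:existence} (the hom-set is nonempty) with Proposition \ref{prop:uniq} (a nonempty hom-set has cardinality one). Your assembly of these ingredients, including the remark that the real work lives in the two propositions, is the same route the paper takes.
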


\subsection{\texorpdfstring{Special objects in $\scrN^p_{\homo}$}{Special objects in Np} } \label{subsect:sp.obj}

Now we consider some special objects in $\scrN^p_{\homo}$.

\subsubsection{\texorpdfstring{$\scrN^p_{\homo}$-initial objects}{Np-initial object} }

We recall some concepts in \cite[Chapter 5]{R1979}. Let $\mathcal{C}$ be a category. An object $O$ in $\mathcal{C}$ is called {\defines initial} if it holds for any object $Y$ that $\Hom_{\mathcal{C}}(O, Y)$ contains only one morphism. The initial object in $\mathcal{C}$ is unique up to isomorphism. Let $\mathcal{D}$ be a full subcategory of $\mathcal{C}$.
In \cite{LLHZ2025}, authors introduced $\mathcal{D}$-initial object which is a generalization of initial object.

\begin{definition} \rm
An object $C \in \mathcal{C}$ is called {\defines $\mathcal{D}$-initial} if for any $D\in\mathcal{D}$, there is a unique morphism $h:C\to D$ such that
\[
\xymatrix@C=1.5cm{
 C \ar[r]^{h} \ar[d]_{\subseteq} & D \\
 D' \ar[ru]_{h'} &
}
\]
commutes, where $D'$ is an initial object in $\mathcal{D}$ and $h'$ is a morphism in $\mathcal{D}$.
\end{definition}

For any object $C'$ that is a subobject of $D'$ in $\mathcal{C}$, consider any morphism $\hbar : C'\to D$ such that the following diagram
\[
\xymatrix@C=1.5cm{
 C' \ar[r]^{\hbar} \ar[d]_{e}^{\subseteq} & D \\
 D' \ar[ru]_{h'} &
}
\]
commutes. We have always $\hbar=h'e$. By the uniqueness of $h'$, $\hbar$ is unique. Then we immediately obtain the following lemma.

\begin{lemma} \label{lemm:initial}
Let $\mathcal{C}$ be a category and $\mathcal{D}$ a subcategory of $\mathcal{C}$, and let $D'$ be an initial object in $\mathcal{D}$.
If an object $C$ is a subobject of $D'$ in $\mathcal{C}$, then $C$ is a $\mathcal{D}$-initial object.
\end{lemma}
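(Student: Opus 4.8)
The plan is to follow the paragraph that immediately precedes the statement and turn its informal argument into a clean proof. Let $C$ be a subobject of $D'$ in $\mathcal{C}$, with monomorphism (inclusion) $e\colon C\to D'$; fix an arbitrary object $D\in\mathcal{D}$, together with the unique morphism $h'\colon D'\to D$ guaranteed by the fact that $D'$ is initial in $\mathcal{D}$. First I would \emph{define} the candidate morphism $h\colon C\to D$ to be the composite $h:=h'e$, so that the required triangle
\[
\xymatrix@C=1.5cm{
 C \ar[r]^{h} \ar[d]_{e}^{\subseteq} & D \\
 D' \ar[ru]_{h'} &
}
\]
commutes by construction. This establishes existence in the definition of $\mathcal{D}$-initial.

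Next I would prove uniqueness. Suppose $\hbar\colon C\to D$ is \emph{any} morphism making the same triangle commute, i.e.\ $\hbar=h'e$. Since $h'$ is the \emph{unique} morphism $D'\to D$ in $\mathcal{D}$ (initiality of $D'$), the factorization $\hbar=h'e$ forces $\hbar$ to equal $h'e=h$; there is no freedom left once $h'$ and $e$ are fixed. Hence any competitor is forced to coincide with $h$, which is exactly the assertion that the morphism $C\to D$ making the diagram commute is unique. Since $D\in\mathcal{D}$ was arbitrary, $C$ satisfies the defining property of a $\mathcal{D}$-initial object.

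I do not anticipate a genuine obstacle here: the statement is essentially a formal unravelling of the definition, using only that $D'$ is initial in $\mathcal{D}$ and that $e$ is a fixed morphism. The one point deserving a sentence of care is the meaning of ``subobject'': I would note explicitly that $e\colon C\to D'$ being the structural inclusion (a fixed monomorphism) is what makes $h'e$ a \emph{well-defined, canonical} choice and what makes the commutativity condition $\hbar = h'e$ available to pin down $\hbar$. If one wanted to be even more careful, one could also remark that $h'e$ is indeed a morphism of $\mathcal{C}$ (composition of morphisms), but need not lie in $\mathcal{D}$ — which is fine, since the definition of $\mathcal{D}$-initial only asks for a morphism in $\mathcal{C}$. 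With these remarks in place the proof is a two-line diagram chase, and it applies verbatim to the situation we care about, namely $\mathcal{C}={\scrN^p_{\homo}}$, $\mathcal{D}={\scrA^p_{\homo}}$, $D'=(\w{\bfS_{\homo}(\II_A)},\mathbf{1}_{\II_A},\w{\gamma}_{\xi})$, and $C=(\bfS_{\homo}(\II_A),\mathbf{1}_{\II_A},\gamma_{\xi})$.
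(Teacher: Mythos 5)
Your proof is correct and follows essentially the same route as the paper: define $h:=h'e$ for existence, and observe that the commutativity condition $\hbar=h'e$ together with the uniqueness of $h'$ forces uniqueness of $h$. Your added remarks on the role of the fixed monomorphism $e$ and on $h'e$ living in $\mathcal{C}$ rather than $\mathcal{D}$ are harmless clarifications of what the paper leaves implicit.
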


The following result is the second main result of this paper.

\begin{theorem} \label{thm:2}
The triple $(\bfS_{\homo}(\II_A), \mathbf{1}_{\II_A}, \gamma_{\xi})$ in $\scrN^p_{\homo}$ is an $\scrA^p_{\homo}$-initial object. To be more precise, for any object $(N, v,\delta)$, there is a unique morphism $h:(\bfS_{\homo}(\II_A),\mathbf{1}_{\II_A},\gamma_{\xi}) \to (N,v,\delta)$ in $\scrN^p_{\homo}$, such that the diagram
\[ \xymatrix@C=1.5cm{
  (\bfS_{\homo}(\II_A),\mathbf{1}_{\II_A},\gamma_{\xi})
  \ar[r]^{h} \ar[d]_{\subseteq}
& (N,v,\delta) \\
 (\w{\bfS_{\homo}(\II_A)},\mathbf{1}_{\II_A},\w{\gamma}_{\xi})
  \ar[ru]_{\w{h}}
 &
} \]
commutes. Here, $\w{h}$ is an $(A,B)$-homomorphism induced by the completion $\w{\bfS_{\homo}(\II_A)}$ of $\bfS_{\homo}(\II_A)$, and it is an extension of $h$.
\end{theorem}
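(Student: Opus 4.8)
The plan is to apply Lemma \ref{lemm:initial} together with Theorem \ref{thm:1}. By Theorem \ref{thm:1}, the triple $(\w{\bfS_{\homo}(\II_A)}, \mathbf{1}_{\II_A}, \w{\gamma}_{\xi})$ is an initial object in $\scrA^p_{\homo}$; here $\scrA^p_{\homo}$ plays the role of $\mathcal{D}$ inside $\mathcal{C} = \scrN^p_{\homo}$. By Proposition \ref{prop:bfS lies in Np} the triple $(\bfS_{\homo}(\II_A), \mathbf{1}_{\II_A}, \gamma_{\xi})$ is an object in $\scrN^p_{\homo}$, and since $\bfS_{\homo}(\II_A) \subseteq \w{\bfS_{\homo}(\II_A)}$ with $\mathbf{1}_{\II_A}$ being the same element and $\gamma_{\xi}$ being the restriction of $\w{\gamma}_{\xi}$ (as noted right before Lemma \ref{lemm:Pc} and used in Proposition \ref{prop:bfS lies in Np}), the inclusion $\subseteq$ is a morphism in $\scrN^p_{\homo}$: the conditions \ref{H1} and \ref{H2} hold trivially because the structure maps agree on the subobject. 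Hence $(\bfS_{\homo}(\II_A), \mathbf{1}_{\II_A}, \gamma_{\xi})$ is a subobject of the initial object $(\w{\bfS_{\homo}(\II_A)}, \mathbf{1}_{\II_A}, \w{\gamma}_{\xi})$ of $\mathcal{D} = \scrA^p_{\homo}$ inside $\mathcal{C} = \scrN^p_{\homo}$.

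First I would verify carefully that the inclusion really is a $\scrN^p_{\homo}$-morphism, i.e.\ that \ref{H1} and \ref{H2} are satisfied: $\mathbf{1}_{\II_A}\mapsto\mathbf{1}_{\II_A}$ is immediate, and the square in \ref{H2} commutes since $\w{\gamma}_{\xi}$ restricted to $\bfS_{\homo}(\II_A)^{\oplus_p 2^{\dimA}}$ coincides with $\gamma_{\xi}$, which is exactly the compatibility recorded in the discussion preceding Lemma \ref{lemm:Pc}. Then I would invoke Lemma \ref{lemm:initial} with $\mathcal{C} = \scrN^p_{\homo}$, $\mathcal{D} = \scrA^p_{\homo}$, $D' = (\w{\bfS_{\homo}(\II_A)}, \mathbf{1}_{\II_A}, \w{\gamma}_{\xi})$, and $C = (\bfS_{\homo}(\II_A), \mathbf{1}_{\II_A}, \gamma_{\xi})$, to conclude immediately that $C$ is a $\scrA^p_{\homo}$-initial object.

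For the second, more explicit assertion, given any object $(N, v, \delta)$ in $\scrA^p_{\homo}$, Theorem \ref{thm:1} furnishes the unique morphism $\w{h}: (\w{\bfS_{\homo}(\II_A)}, \mathbf{1}_{\II_A}, \w{\gamma}_{\xi}) \to (N, v, \delta)$ in $\scrA^p_{\homo}$ (this is precisely the morphism $\tilde\theta = \theta_{\lim}\circ\rho$ constructed in Proposition \ref{prop:existence}, whose uniqueness is Proposition \ref{prop:uniq}). Setting $h := \w{h}\circ(\subseteq)$, the restriction of $\w{h}$ to $\bfS_{\homo}(\II_A)$, gives a morphism $h: (\bfS_{\homo}(\II_A),\mathbf{1}_{\II_A},\gamma_{\xi}) \to (N,v,\delta)$ in $\scrN^p_{\homo}$ making the triangle commute by construction. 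For uniqueness, if $h'$ is another such morphism fitting into the same triangle with some $\w{h}'$, then $\w{h}'$ and $\w{h}$ are both morphisms out of the initial object $(\w{\bfS_{\homo}(\II_A)}, \mathbf{1}_{\II_A}, \w{\gamma}_{\xi})$ of $\scrA^p_{\homo}$, hence equal by Theorem \ref{thm:1}, so $h = \w{h}\circ(\subseteq) = \w{h}'\circ(\subseteq) = h'$; alternatively one argues directly as in the paragraph following the definition of $\mathcal{D}$-initial, that any $h'$ factoring through $D'$ equals $\w{h}\circ e$ and is thus forced.

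The main obstacle is entirely bookkeeping rather than mathematical depth: one must be scrupulous that the ambient categories are set up so that $\scrA^p_{\homo}$ is a (full) subcategory of $\scrN^p_{\homo}$ in the sense required by Lemma \ref{lemm:initial}, that the inclusion $\bfS_{\homo}(\II_A)\hookrightarrow\w{\bfS_{\homo}(\II_A)}$ genuinely respects all three pieces of structure (bimodule structure, distinguished element, juxtaposition map) so that it is a bona fide $\mathcal{C}$-morphism, and that ``subobject'' in Lemma \ref{lemm:initial} is instantiated by this inclusion. Once these identifications are in place, the theorem is a formal consequence of Theorem \ref{thm:1} and Lemma \ref{lemm:initial}, with no further computation needed.
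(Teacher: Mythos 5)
Your proposal is correct and follows essentially the same route as the paper: exhibit $(\bfS_{\homo}(\II_A), \mathbf{1}_{\II_A}, \gamma_{\xi})$ as a subobject of the $\scrA^p_{\homo}$-initial object $(\w{\bfS_{\homo}(\II_A)}, \mathbf{1}_{\II_A}, \w{\gamma}_{\xi})$ from Theorem \ref{thm:1} and invoke Lemma \ref{lemm:initial}. Your additional verification that the inclusion satisfies \ref{H1} and \ref{H2}, and the explicit uniqueness argument, merely spell out details the paper leaves implicit.
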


\begin{proof}
Since $\w{\bfS_{\homo}(\II_A)}$ is a completion of $\bfS_{\homo}(\II_A)$, we have an embedding $\bfS_{\homo}(\II_A) \mathop{\to}\limits^{\subseteq} \w{\bfS_{\homo}(\II_A)}$,
it follows that $(\bfS_{\homo}(\II_A), \mathbf{1}_{\II_A}, \gamma_{\xi})$ is a subobject of $(\w{\bfS_{\homo}(\II_A)}, \mathbf{1}_{\II_A}, \w{\gamma}_{\xi})$.
Then by Lemma \ref{lemm:initial}, we obtain that $(\bfS_{\homo}(\II_A), \mathbf{1}_{\II_A}, \gamma_{\xi})$, as an object in $\scrN^p_{\homo}$, is an $\scrA^p_{\homo}$-initial object since $(\w{\bfS_{\homo}(\II_A)}, \mathbf{1}_{\II_A}, \w{\gamma}_{\xi})$ is an initial object in $\scrA^p_{\homo}$ (see Theorem \ref{thm:1}).
\end{proof}

\subsubsection{An important object}
Let $\FF$, $A$ and $B$ are completed, i.e., $\w{\FF}=\FF$, $\w{A}=A$ and $\w{B}=B$. In this subsection we provide another object in $\scrA^p_{\homo}$. Recall that under the action of $\kappa_c$ and $\kappa_d$, $\II_A$ is divided to $2^{\dimA}$ subsets which are of the form
\[ \Pi_{(\sigma_1,\ldots,\sigma_{\dimA})} = \kappa_{\sigma_1}([c,d]_{\FF}) \times \kappa_{\sigma_2}([c,d]_{\FF})
  \times \cdots \times \kappa_{\sigma_{\dimA}}([c,d]_{\FF}), \]
where $(\sigma_1,\ldots,\sigma_{\dimA}) \in \{c,d\}^{\times \dimA}$, see \ref{subsubsect:Eu}. For simplicity, we define $\{\II_i \mid 1\=< i \=< 2^{\dimA}\}$ is the set of all $\Pi_{(\sigma_1,\ldots,\sigma_{\dimA})}$ as above.


\begin{lemma} \label{lemm:A}
Let $\ave : B^{\oplus_p 2^{\dimA}} \to B$ be the map defined as
\begin{align*}
  (b_1,b_2,\ldots, b_{2^{\dimA}})
& := (b_{
          (\sigma_1,\ldots,\sigma_{2^{\dimA}})
        })|_{
             (\sigma_1,\ldots,\sigma_{2^{\dimA}})
             \in \{c,d\}^{\times \dimA}
            } \\
& \mapsto
   \sum_{
          \begin{smallmatrix}
             (\sigma_1,\ldots,\sigma_{2^{\dimA}}) \\
             \in \{c,d\}^{\times \dimA}
          \end{smallmatrix}
        }
        \frac
          {\mu_{\II_A}(\Pi_{(\sigma_1,\ldots,\sigma_{\dimA})})}
          {\mu_{\II_A}(\II_A)} b_{(\sigma_1,\ldots,\sigma_{\dimA})} \\
& =: \hspace{0.6cm}
    \sum_{i=1}^{2^{\dimA}} \frac{\mu_{\II_A}(\II_i)}{\mu_{\II_A}(\II_A)} b_i.
\end{align*}
If $\FF$ is an extension of $\RR$, then $\ave$ is an $(A,B)$-homomorphism sending $(\mu_{\II_A}(\II_A)1_B)_{1\times 2^{\dimA}}$ to $\mu_{\II_A}(\II_A)1_B$.
\end{lemma}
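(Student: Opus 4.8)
The plan is to verify in turn the three assertions bundled into the statement: that the given formula really defines a map $B^{\oplus_p 2^{\dimA}}\to B$, that this map is an $(A,B)$-homomorphism, and that it carries $(\mu_{\II_A}(\II_A)1_B)_{1\times 2^{\dimA}}$ to $\mu_{\II_A}(\II_A)1_B$. First I would set $\lambda_i := \mu_{\II_A}(\II_i)/\mu_{\II_A}(\II_A)$ for $1\=< i\=< 2^{\dimA}$, where the $\II_i$ are the cells $\Pi_{(\sigma_1,\ldots,\sigma_{\dimA})}$ of Subsection \ref{subsubsect:Eu}. These cells are pairwise disjoint with union $\II_A$, so finite additivity of $\mu_{\II_A}$ gives $\sum_i\mu_{\II_A}(\II_i)=\mu_{\II_A}(\II_A)$, hence $\sum_i\lambda_i=1$ and $\lambda_i\in\RR^{\>=0}$. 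Since $\FF$ is an extension of $\RR$ we have $\RR\subseteq\FF$, so each $\lambda_i\in\FF$; this is what makes $\lambda_i b_i$ an element of $B$ and $\ave$ a well-defined, additive, $\FF$-homogeneous map.

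The one genuinely delicate point, which I would isolate as a preliminary observation, is that each $\lambda_i$ acts \emph{centrally} on $B$. Here is where the hypothesis $\FF/\RR$ and the Galois data of the modulation are used: axiom \ref{(2.3)} makes the $\FF$-action on every $A_\alpha$ central, and the computation recalled in Section \ref{sect:Perlim} shows $(a\otimes b)\lambda=\lambda(a\otimes b)$ for $\lambda\in\FF$; the reason a \emph{real} scalar is not altered when pushed across a tensor $a\otimes b$ is precisely that each $g_\alpha\in\Gal(\FF_{\fcts(\alpha)}\cap\FF_{\fctt(\alpha)}/\FF)$ fixes $\FF$, hence fixes $\RR\subseteq\FF$. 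From this I would record $\lambda_i b=b\lambda_i$ and $\lambda_i(bb')=(\lambda_i b)b'=b(\lambda_i b')$ for all $b,b'\in B$.

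With centrality in hand the homomorphism properties are formal. For the right $B$-action (which is componentwise on $B^{\oplus_p 2^{\dimA}}$) one has $\ave\big((b_i)_i.b\big)=\sum_i\lambda_i b_i b=\ave\big((b_i)_i\big).b$; for the left $A$-action, which on each copy of $B$ is $a.b=\homo(a)b$, centrality of $\lambda_i$ gives $\ave\big(a.(b_i)_i\big)=\sum_i\lambda_i\homo(a)b_i=\homo(a)\sum_i\lambda_i b_i=a.\ave\big((b_i)_i\big)$, and together with additivity this exhibits $\ave$ as an $(A,B)$-homomorphism. Finally, evaluating on the distinguished tuple, $\ave\big((\mu_{\II_A}(\II_A)1_B)_{1\times 2^{\dimA}}\big)=\sum_i\lambda_i\mu_{\II_A}(\II_A)1_B=\big(\sum_i\mu_{\II_A}(\II_i)\big)1_B=\mu_{\II_A}(\II_A)1_B$.

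I expect the main obstacle to be the centrality step: one must be sure the real-valued measure weights behave as genuine central scalars on the noncommutative ring $B$, and the clean way to see this is to note $\RR\subseteq\FF$ together with the modulation's Galois automorphisms fixing $\FF$. Everything after that — well-definedness, the two one-sided homomorphism identities, and the evaluation on the distinguished tuple — is bookkeeping with distributivity and finite additivity of the measure.
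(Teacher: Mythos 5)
Your proposal is correct and follows essentially the same route as the paper's proof: both hinge on the observation that the real weights $\mu_{\II_A}(\II_i)/\mu_{\II_A}(\II_A)$ lie in $\RR\subseteq\FF$ and are therefore fixed by the Galois automorphisms $g_{a_{\wp,j}}\in\Gal(\FF_{\fcts(a_{\wp,j})}\cap\FF_{\fctt(a_{\wp,j})}/\FF)$ appearing in the tensor decomposition of elements of $B$, so that they act centrally and can be pulled past $\homo(a)$ and $b$. The remaining steps (additivity, the one-sided module identities, and the evaluation on the distinguished tuple via finite additivity of the measure) match the paper's bookkeeping exactly.
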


\begin{proof}
By the definition of $\ave$, we have
\[ \ave((1_B)_{1\times 2^{\dimA}})
= \sum_{i=1}^{2^{\dimA}} \frac{\mu_{\II_A}(\II_i)}{\mu_{\II_A}(\II_A)} 1_B
= \frac{1_B}{\mu_{\II_A}(\II_A)} \sum_{i=1}^{2^{\dimA}} \mu_{\II_A}(\II_i)
= 1_B, \]
where $\displaystyle \sum_{i=1}^{2^{\dimA}} \mu_{\II_A}(\II_i) = \mu_{\II_A}(\II_A)$ holds since $\mu_{\II_A}$ is a measure.
Next, we prove that $\ave$ is an $(A,B)$-homomorphism. The proof of $\ave$ being an $\FF$-linear map is left for readers.
We need prove that $\ave(a.(b_1,\ldots,b_{2^{\dimA}}).b) = a.\ave((b_1,\ldots,b_{2^{\dimA}})).b$ holds for all $a\in A$ and $b\in B$.
By the definition of $\ave$, we have
\begin{align} \label{formula: lemm-A}
   \ave(a.(b_1,\ldots,b_{2^{\dimA}}).b)
=  \ave((\homo(a)b_1b,\ldots,\homo(a)b_{2^{\dimA}}b))
=  \sum_{i=1}^{2^{\dimA}}
      \frac{\mu_{\II_A}(\II_i)}{\mu_{\II_A}(\II_A)}
        \homo(a)b_ib.
\end{align}
Notice that each $\homo(a)b_i$, as an element in $B=\alg(\Q_B,\vecd_b,\vecg_B,\EE, (\FF_i)_{i\in\Q_0}))$, is a finite sum
\[ \homo(a)b_i = \sum_{\wp=a_{\wp,1}\cdots a_{\wp,\ell} \in (\Q_B)_{\>=0}} k_{i,\wp}, \]
where
\[ k_{i,\wp} \in A_{\wp}
= \bigotimes_{j=1}^{\ell}
     \FF_{\fcts(a_{\wp,j})}
       \otimes_{
         \FF_{\fcts(a_{\wp,j})} \cap \FF_{\fctt(a_{\wp,j})}
         }
        \FF_{\fctt(a_{\wp,j})}^{g_{ a_{\wp,j} }}, \]
and $g_{a_{\wp,j}}$ is an $\FF$-automorphism in Galois group $\Gal(\FF_{\fcts(a_{\wp,j})} \cap \FF_{\fctt(a_{\wp,j})}/\FF)$, then we have
\[ \homo(a)b_i \frac{\mu_{\II_A}(\II_i)}{\mu_{\II_A}(\II_A)}
 = g_{a_{\wp,1}} \compos\cdots \compos g_{a_{\wp,\ell}}
     \Big(
       \frac{\mu_{\II_A}(\II_i)}{\mu_{\II_A}(\II_A)}
     \Big)\homo(a)b_i
 = \homo(a)b_i \frac{\mu_{\II_A}(\II_i)}{\mu_{\II_A}(\II_A)}\]
by $\displaystyle \frac{\mu_{\II_A}(\II_i)}{\mu_{\II_A}(\II_A)} \in \RR \subseteq \FF$.
Thus, (\ref{formula: lemm-A}) yields
\begin{align*}
    \ave(a.(b_1,\ldots,b_{2^{\dimA}}).b)
&=  \sum_{i=1}^{2^{\dimA}}
    \homo(a) \frac{\mu_{\II_A}(\II_i)}{\mu_{\II_A}(\II_A)} b_ib
 =  \homo(a)\bigg(\sum_{i=1}^{2^{\dimA}}
    \frac{\mu_{\II_A}(\II_i)}{\mu_{\II_A}(\II_A)} b_i\bigg) b \\
&= \homo(a)\ave((b_1,\ldots,b_{2^{\dimA}}))b
 = a.\ave((b_1,\ldots,b_{2^{\dimA}})).b,
\end{align*}
Then $\ave$ is an $(A,B)$-homomorphism. Furthermore, the following formula
\[ \ave((\mu_{\II_A}(\II_A)1_B)_{1\times 2^{\dimA}})
= \mu_{\II_A}(\II_A) \ave((1_B)_{1\times 2^{\dimA}})
= \mu_{\II_A}(\II_A) 1_B \]
holds by using this fact.
\end{proof}

\begin{lemma} \label{lemm:A limit}
In the case of $\FF$, $A$, and $B$ being completed, we have $\ave(\invlim x_t) = \invlim \ave(x_t)$ for any Cauchy sequence $\{x_t\}_{t\in\NN}$ in $B$.
\end{lemma}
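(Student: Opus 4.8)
The plan is to observe that $\ave$ is a \emph{bounded} $\FF$-linear map and then appeal to the standard fact that a bounded linear map between complete normed spaces commutes with limits of Cauchy sequences. By Lemma \ref{lemm:A}, $\ave\colon B^{\oplus_p 2^{\dimA}} \to B$ is $\FF$-linear (it is even an $(A,B)$-homomorphism), and since $B$ is complete the finite direct sum $B^{\oplus_p 2^{\dimA}}$, carrying the norm of Lemma \ref{lemm:dir sum}, is complete as well, so every Cauchy sequence in it has a projective limit. Hence it suffices to prove $\ave$ is bounded.

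To see this, write $c_i := \mu_{\II_A}(\II_i)/\mu_{\II_A}(\II_A)$, so that $c_i > 0$ and $\sum_{i=1}^{2^{\dimA}} c_i = 1$, and put $c_{\min} := \min_i c_i$. For $x = (b_1,\ldots,b_{2^{\dimA}})$, the triangle inequality for $\Vert\cdot\Vert_{B,p}$ followed by Hölder's inequality with exponents $\tfrac{p}{p-1}$ and $p$ gives
\[ \Vert \ave(x)\Vert_{B,p}
 \leq \sum_{i=1}^{2^{\dimA}} c_i\,\Vert b_i\Vert_{B,p}
 = \sum_{i=1}^{2^{\dimA}} c_i^{\frac{p-1}{p}}\bigl(c_i^{\frac{1}{p}}\Vert b_i\Vert_{B,p}\bigr)
 \leq \Bigl(\sum_{i=1}^{2^{\dimA}} c_i\Bigr)^{\frac{p-1}{p}}\Bigl(\sum_{i=1}^{2^{\dimA}} c_i\,\Vert b_i\Vert_{B,p}^p\Bigr)^{\frac{1}{p}}. \]
Since $\sum_i c_i = 1$ and $c_i^p \geq c_{\min}^{p-1} c_i$ for $p \geq 1$, the right-hand side is at most $c_{\min}^{-\frac{p-1}{p}}\bigl(\sum_i c_i^p\Vert b_i\Vert_{B,p}^p\bigr)^{1/p} = c_{\min}^{-\frac{p-1}{p}}\Vert x\Vert_{B^{\oplus_p 2^{\dimA}}}$, so $\ave$ is bounded with $\Vert\ave\Vert \leq c_{\min}^{-(p-1)/p}$. (Alternatively one may skip this computation entirely: $\dim_{\FF} B^{\oplus_p 2^{\dimA}} = 2^{\dimA}\dim_{\FF}B < \infty$, and every $\FF$-linear map out of a finite-dimensional normed space over the complete field $\FF$ is automatically bounded.)

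It remains to run the routine argument. Given a Cauchy sequence $\{x_t\}_{t\in\NN}$ in $B^{\oplus_p 2^{\dimA}}$ (the domain of $\ave$), boundedness gives $\Vert\ave(x_{t_1}) - \ave(x_{t_2})\Vert_{B,p} \leq \Vert\ave\Vert\,\Vert x_{t_1}-x_{t_2}\Vert_{B^{\oplus_p 2^{\dimA}}}$, so $\{\ave(x_t)\}_{t\in\NN}$ is Cauchy in the complete space $B$ and thus has a limit; writing $x := \invlim x_t$, the same estimate $\Vert\ave(x_t)-\ave(x)\Vert_{B,p} \leq \Vert\ave\Vert\,\Vert x_t - x\Vert_{B^{\oplus_p 2^{\dimA}}} \to 0$ then shows $\invlim\ave(x_t) = \ave(x) = \ave(\invlim x_t)$. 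The only mildly nontrivial point is the boundedness estimate in the previous paragraph; once that is in hand the conclusion is the standard ``continuity commutes with limits'' fact, and, unlike the proof of Lemma \ref{lemm:A}, nothing about the field extension $\FF/\RR$ is needed here.
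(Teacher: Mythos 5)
Your proof is correct, and its main route differs from the paper's. The paper disposes of the lemma in one line: $B^{\oplus_p 2^{\dimA}}$ is a finite-dimensional $\FF$-vector space, every linear map out of a finite-dimensional normed space over a complete field is continuous, hence $\ave$ commutes with projective limits. That is exactly the argument you relegate to your parenthetical remark. Your primary argument instead produces an explicit operator-norm bound $\Vert\ave\Vert \leq c_{\min}^{-(p-1)/p}$ via H\"older's inequality, where $c_i = \mu_{\II_A}(\II_i)/\mu_{\II_A}(\II_A)$; the computation checks out (the key step $c_i^p \geq c_{\min}^{p-1}c_i$ correctly compensates for the fact that the norm on $B^{\oplus_p 2^{\dimA}}$ carries weights $c_i^p$ rather than $c_i$). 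What your version buys is a quantitative bound that does not depend on $\dim_{\FF}B$ and would survive in an infinite-dimensional setting, and it avoids invoking the equivalence-of-norms theorem (which is itself where completeness of $\FF$ enters the paper's argument). What it costs is the extra hypothesis, which you assert but which the construction in Subsection 4.3 does not obviously guarantee, that every $c_i$ is strictly positive; if some piece $\II_i$ has $\mu_{\II_A}(\II_i)=0$ then $c_{\min}=0$ and your explicit constant degenerates, whereas the finite-dimensionality argument is unaffected. Since you supply that argument as a fallback, the proof as a whole is complete. You are also right to observe that, in contrast to Lemma \ref{lemm:A}, nothing about the extension $\FF/\RR$ or the Galois action is needed here.
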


\begin{proof}
Since $B$, as an $\FF$-vector space, is finite-dimensional, then so is $B^{\oplus_p 2^{\dimA}}$. It is well-known that any linear map defined on a finite-dimensional vector space is continuous, then $\ave$ is continuous since all $(A,B)$-homomorphisms are $\FF$-linear. Thus, $\ave(\invlim x_t) = \invlim \ave(x_t)$ holds for all Cauchy sequence $\{x_t\}_{t\in\NN}$.
\end{proof}

\begin{proposition} \label{prop:A}
The triple $(B, \mu_{\II_A}(\II_A), \ave)$ is an object in $\scrA^p_{\homo}$.
\end{proposition}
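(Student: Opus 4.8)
The plan is to check the three defining conditions \ref{Np1}, \ref{Np2}, \ref{Np3} of Definition \ref{def:Np} for the triple $(B,v,\ave)$ with second entry $v:=\mu_{\II_A}(\II_A)1_B$, and then to observe that $B$ is complete by hypothesis ($\w{B}=B$), so that the triple lies in the full subcategory $\scrA^p_{\homo}$ of Definition \ref{def:Ban mod cat}.

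For \ref{Np1} I would regard $B$ as an $(A,B)$-bimodule over itself: the right $B$-action is multiplication in $B$, and the left $A$-action is $a.b:=\homo(a)b$, so that $(a.m).b=a.(m.b)$ is immediate from associativity in $B$ together with $\homo$ being a ring homomorphism. Equipping $B$ with the norm $\Vert\cdot\Vert_{B,p}$ of Subsection \ref{sect:norm tens ring}, condition \ref{N1} is clear, and \ref{N2} amounts to $\Vert\homo(a)mb\Vert_{B,p}=\bfvert a\bfvert\,\Vert m\Vert_{B,p}\,\Vert b\Vert_{B,p}$, which reduces, via $\bfvert a\bfvert=\Vert\homo(a)\Vert_{B,p}$, to the multiplicativity of $\Vert\cdot\Vert_{B,p}$ coming from the normed basis function $\norm_B$ on $\basis{B}$ (and in particular yields $\Vert 1_B\Vert_{B,p}\=<1$, which I reuse below). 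Verifying that $\norm_B$ is compatible with this $\homo$-twisted bimodule structure so that the equality in \ref{N2} really holds is the step I expect to demand the most care; everything else is bookkeeping built on Lemmas \ref{lemm:A} and \ref{lemm:A limit}.

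For \ref{Np2}, since $\mu_{\II_A}(\II_A)\in\RR^{\>=0}$ I obtain $\Vert v\Vert_{B,p}=\mu_{\II_A}(\II_A)\Vert 1_B\Vert_{B,p}\=<\mu_{\II_A}(\II_A)$, giving the required bound on $v$. For the homomorphism demanded by \ref{Np2} I would take $I$ to be a one-element set, so $B^{\times I}=B$ and $(1_B)_{1\times I}=1_B$, and put $\Pc\colon B\to B$, $b\mapsto\mu_{\II_A}(\II_A)b$; this is an $(A,B)$-homomorphism because $\mu_{\II_A}(\II_A)\in\RR\subseteq\FF$ is central (so $\Pc(a.b.b')=\mu_{\II_A}(\II_A)\homo(a)bb'=a.\Pc(b).b'$), and $\Pc(1_B)=\mu_{\II_A}(\II_A)1_B=v$.

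For \ref{Np3} I would take $\delta:=\ave\colon B^{\oplus_p 2^{\dimA}}\to B$. Since $\FF/\RR$ is a field extension, Lemma \ref{lemm:A} applies and gives precisely that $\ave$ is an $(A,B)$-homomorphism with $\ave\big((\mu_{\II_A}(\II_A)1_B)_{1\times 2^{\dimA}}\big)=\mu_{\II_A}(\II_A)1_B$, i.e. $\delta\big((v)_{1\times 2^{\dimA}}\big)=v$. Boundedness and $\FF$-linearity of $\ave$ hold because $B^{\oplus_p 2^{\dimA}}$ is finite-dimensional over the complete field $\FF$, so every $\FF$-linear map out of it is continuous, and the commutation with projective limits $\delta(\invlim x_t)=\invlim\delta(x_t)$ is exactly Lemma \ref{lemm:A limit}. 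Hence $(B,v,\ave)$ is an object of $\scrN^p_{\homo}$, and since $\w{B}=B$ it is complete, so it is an object of $\scrA^p_{\homo}$, as claimed.
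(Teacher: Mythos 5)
Your proposal is correct and takes essentially the same route as the paper: verify \ref{Np1} via the norm $\Vert\cdot\Vert_{B,p}$ of (\ref{norm B}), obtain \ref{Np2} and \ref{Np3} from Lemmas \ref{lemm:A} and \ref{lemm:A limit}, and conclude membership in $\scrA^p_{\homo}$ from the completeness of $B$. You are in fact more explicit than the paper's one-line argument, notably in supplying the homomorphism $\Pc\colon B^{\times I}\to B$ demanded by \ref{Np2} and in flagging that the multiplicativity required by \ref{N2} for the basis-defined norm is the one point needing care (a point the paper's proof passes over silently).
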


\begin{proof}
Since $B$ with the map (\ref{norm B}) is a normed $(A,B)$-module, \ref{Np1} holds. Lemmas \ref{lemm:A} and \ref{lemm:A limit} provides \ref{Np2} and \ref{Np3}. Thus, $(B, \mu_{\II_A}(\II_A), \ave)$ is an object in $\scrN^p_{\homo}$. Moreover, $\FF$, $A$, and $B$ are complete, it follows that $(B, \mu_{\II_A}(\II_A), \ave)$ is an object in $\scrA^p_{\homo}$.
\end{proof}

\section{Applications I: Abstract integrations} \label{sect:app-int}

\def\bfF{\mathbf{F}}
\def\INT{\mathfrak{I}}

Abstract integral is a general form of Reimann/Lebesgue integral, which was first introduced by Daniell in \cite[Page 280]{Dan1918}. Moreover, Daniell considered other generalizations of integrations, such as \cite{Dan1919-I, Dan1919-II, Dan1921}. Nowadays, there are multiple versions of the definition of abstract integral, and some literature also provides axiomatic versions of the definition of Daniell integral, cf. \cite[etc]{Royden1988}.

\subsection{Daniell integrations} \label{subsect:Den int}
We recall the original definition of Daniell integrals in the next paragraph.

Let $\bfF(X)$ be a family of bounded real functions defined over a set $X$ such that the following two conditions hold:
\begin{itemize}
  \item[(1)] $\bfF(X)$ is an $\RR$-vector space;
  \item[(2)] if $f\in \bfF(X)$, then $|f|: X \to \RR$, $x\mapsto |f(x)|$ lies in $\bfF(X)$.
\end{itemize}
The {\defines Daniell integral} of a function $h\in F$ is the image $\INT(h)$ of $h$ given by the map $ \INT : \bfF(X) \to \RR$, where $\INT$ satisfies the following conditions.
\begin{enumerate}[label=(D\arabic*)]
  \item for arbitrary $h_1,h_2 \in \bfF(X)$, $k_1,k_2\in\RR$:
    $\INT(k_1h_1+k_2h_2) = k_1\INT(h_1) + k_2 \INT(h_2)$;
    \label{D1}
  \item for each $h\in \bfF(X)$ with $\Ima(h) \in \RR^{\>=0}$, we have $\INT(h)\>= 0$;
    \label{D2}
  \item  for each nonincreasing sequence $\{h_t\}_{t\in\NN^+}$, if $\displaystyle \lim_{t\to+\infty} h_t(x) = 0$ holds for all $x\in X$, then $\displaystyle \lim_{t\to+\infty} \INT(h_t) = 0$. \label{D3}
\end{enumerate}

If we want to consider the abstract integral of a function $f: \II_A \to B$ in $\w{\bfS_{\homo}(\II_A)}$, since $B$ may not necessarily have a partial order, the conditions \ref{D1}, \ref{D2}, and \ref{D3} need to be modified by the following.
\begin{enumerate}[label=($\mathfrak{I}$\arabic*)]
  \item $\INT$ is an $(A,B)$-homomorphism;
    \label{I1}
  \item for each $h\in \bfF(X)$, we have $\INT(\Vert h\Vert_{B,p} 1_B) = \omega1_B \in \RR^{\>=0}1_B$;
    \label{I2}
  \item for each  nonincreasing Cauchy sequence $\{h_t\}_{t\in\NN^+}$ in $\w{\bfS_{\homo}(\II_A)}$ with $\invlim h_t = 0$, we have $\invlim \INT(h_t) = 0$. \label{I3}
\end{enumerate}

\begin{theorem} \label{thm:3}
Assume that $p=1$, $\FF$ is an extension of $\RR$, and $A$ and $B$ are completed. Then there exists a unique morphism $T: (\bfS_{\homo}(\II_A), \mathbf{1}_{\II_A}, \gamma_{\xi}) \to (B, \mu_{\II_A}(\II_A)1_B, \ave)$ in ${\scrN^1_{\homo}}$ such that
\[ \xymatrix@C=1.5cm{
  (\bfS_{\homo}(\II_A),\mathbf{1}_{\II_A},\gamma_{\xi})
  \ar[r]^{T} \ar[d]_{\subseteq}
& (B, \mu_{\II_A}(\II_A)1_B, \ave) \\
  (\w{\bfS_{\homo}(\II_A)},\mathbf{1}_{\II_A},\w{\gamma}_{\xi})
  \ar[ru]_{\w{T}}
 &
} \]
commutes. Here, $\w{T}$ is an $(A,B)$-homomorphism in $\scrA^p_{\homo}$ induced by the completion $\w{\bfS_{\homo}(\II_A)}$ of $\bfS_{\homo}(\II_A)$. Furthermore,
\begin{itemize}
  \item[\rm(1)] $\w{T}$ sends each function $f = \sum_i b_i \mathbf{1}_{I_i} \in \bfS_{\homo}(\II_A)$ $(\forall i\ne j, I_i \cap I_j = \varnothing$, and $\II_A = \bigcup_i I_i)$ to an element $\sum_i b_i \mu_{\II_A}(I_i)$;
  \item[\rm(2)] and $\w{T}$ satisfies {\rm\ref{I1}}, {\rm\ref{I2}}, and {\rm\ref{I3}}.
\end{itemize}
\end{theorem}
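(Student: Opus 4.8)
The plan is to obtain the existence and uniqueness of $T$ and of its completion $\w T$ for free from the categorical results already established, and then to compute $\w T$ explicitly and identify it with integration. First I would record that, since $p=1$ and $\FF$, $A$, $B$ are complete, Proposition \ref{prop:A} tells us that $(B,\mu_{\II_A}(\II_A)1_B,\ave)$ is an object of $\scrA^1_{\homo}$ (reading the second entry $\mu_{\II_A}(\II_A)$ of Proposition \ref{prop:A} as $\mu_{\II_A}(\II_A)1_B\in\RR^{\>=0}1_B\subseteq B$). Then Theorem \ref{thm:2}, which says $(\bfS_{\homo}(\II_A),\mathbf{1}_{\II_A},\gamma_{\xi})$ is an $\scrA^1_{\homo}$-initial object, yields at once a unique morphism $T$ in $\scrN^1_{\homo}$ into $(B,\mu_{\II_A}(\II_A)1_B,\ave)$ together with the commuting triangle through the unique morphism $\w T$ out of the initial object $(\w{\bfS_{\homo}(\II_A)},\mathbf{1}_{\II_A},\w{\gamma}_{\xi})$ of $\scrA^1_{\homo}$ provided by Theorem \ref{thm:1}; that $\w T$ is an $(A,B)$-homomorphism in $\scrA^1_{\homo}$ is part of being such a morphism, so condition \ref{I1} of item (2) is automatic.

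Next I would pin down the formula in (1). The cleanest route is to exhibit the integration map $T_0\colon\bfS_{\homo}(\II_A)\to B$, $\sum_ib_i\mathbf{1}_{I_i}\mapsto\sum_ib_i\mu_{\II_A}(I_i)$, as a morphism $(\bfS_{\homo}(\II_A),\mathbf{1}_{\II_A},\gamma_{\xi})\to(B,\mu_{\II_A}(\II_A)1_B,\ave)$ in $\scrN^1_{\homo}$ and invoke the uniqueness above to conclude $T=T_0$. For this I would check: $T_0$ is well defined (additivity of $\mu_{\II_A}$ on disjoint boxes), $\FF$-linear and $\Vert\cdot\Vert_1$-bounded; it is an $(A,B)$-homomorphism, because $a.f.b=\sum_i(\homo(a)b_ib)\mathbf{1}_{I_i}$ and $\mu_{\II_A}(I_i)\in\RR\subseteq\FF$ is central in $B$ (the same centrality used in the proof of Lemma \ref{lemm:A}), so $T_0(a.f.b)=\homo(a)\big(\sum_ib_i\mu_{\II_A}(I_i)\big)b=a.T_0(f).b$; it satisfies \ref{H1}, since $T_0(\mathbf{1}_{\II_A})=\mu_{\II_A}(\II_A)1_B$; and it satisfies \ref{H2}, because $\gamma_{\xi}$ preserves integrals: from its definition $(\ref{jux map})$ and the fact that each $\kappa_{\sigma}$ rescales $\mu_{\II_A}$ on sub-boxes by the constant factor $\mu_{\II_A}(\kappa_{\sigma}(\II))/\mu_{\II_A}(\II)$, one gets $T_0(\gamma_{\xi}(\pmb f))=\sum_k\frac{\mu_{\II_A}(\Pi_k)}{\mu_{\II_A}(\II_A)}T_0(f_k)=\ave\big(T_0^{\oplus 2^{\dimA}}(\pmb f)\big)$. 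Passing to the completion then gives $\w T(f)=\sum_ib_i\mu_{\II_A}(I_i)$ for $f\in\bfS_{\homo}(\II_A)$. (Equivalently, I could unwind Proposition \ref{prop:existence}: $\theta_0\colon E_0\cong B\to B$ is multiplication by $\mu_{\II_A}(\II_A)$, the identity $\ave\big((\mu_{\II_A}(\II_A)b_i)_i\big)=\sum_i\mu_{\II_A}(\II_i)b_i$ makes the recursion $\theta_{u+1}=\ave\circ\theta_u^{\oplus 2^{\dimA}}\circ(\gamma_{\xi}|_{E_{u+1}})^{-1}$ integration on each $E_u$ by induction, and then density of $\bigcup_uE_u$ in $\w{\bfS_{\homo}(\II_A)}\cong\dirlim E_u$, Lemma \ref{lemm:limits}, together with continuity of $\w T$ and $\Vert\cdot\Vert_1$-continuity of $T_0$, propagate the formula everywhere.)

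For the remaining parts of (2) I would argue as follows. For \ref{I2}: the function $x\mapsto\Vert h(x)\Vert_{B,p}1_B$ takes values in $\RR^{\>=0}1_B$, and for a simple $h=\sum_ib_i\mathbf{1}_{I_i}$ the formula gives $\w T$ of it equal to $\big(\sum_i\Vert b_i\Vert_{B,p}\mu_{\II_A}(I_i)\big)1_B=\omega1_B$ with $\omega\>=0$; this survives passing to a Cauchy limit because $\w T$ is continuous and $\RR^{\>=0}1_B$ is closed in $B$. For \ref{I3}: the construction in Proposition \ref{prop:existence} gives $\Vert\w T\Vert=\Vert\ave\Vert$, which is finite because $\ave$ is linear on the finite-dimensional space $B^{\oplus_p 2^{\dimA}}$ (cf. Lemma \ref{lemm:A limit}); hence $\w T$ is bounded, so continuous, so it commutes with projective limits, and $\invlim\w T(h_t)=\w T(\invlim h_t)=\w T(0)=0_B$ for any nonincreasing Cauchy sequence $\{h_t\}$ with $\invlim h_t=0$.

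The hard part is really item (1): certifying that the abstract $\w T$ coming from the universal property is honest integration. Within that, the delicate step is \ref{H2} for $T_0$ — that the juxtaposition map $\gamma_{\xi}$ commutes with integration — which rests on the uniform scaling of $\mu_{\II_A}$ by the chosen bijections $\kappa_c,\kappa_d$; and then the transfer of the explicit formula from the ``dyadic'' simple functions $\bigcup_uE_u$ to all elementary simple functions and on to the completion, using the $\Vert\cdot\Vert_1$-continuity of integration. Everything else — the existence and uniqueness of $T$, $\w T$, and the conditions \ref{I1} and \ref{I3} — is essentially routine unwinding of the definitions and of the earlier propositions.
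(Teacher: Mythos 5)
Your proposal is correct and follows essentially the same route as the paper: the paper likewise writes down the explicit map $f=\sum_i b_i\mathbf{1}_{I_i}\mapsto\sum_i b_i\mu_{\II_A}(I_i)$, verifies it is an $(A,B)$-homomorphism using exactly the centrality argument $b\,\mu_{\II_A}(I_i)=\mu_{\II_A}(I_i)\,b$ via the Galois automorphisms fixing $\RR\subseteq\FF$, checks \ref{H1} and \ref{H2} against $\ave$ and $\gamma_{\xi}$ by the same measure-rescaling computation, and derives \ref{I2} and \ref{I3} from the boundedness $\Vert T|_{E_u}\Vert=\mu_{\II_A}(\II_A)$ and the resulting commutation with projective limits. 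The only presentational difference is that you front-load the existence/uniqueness via Proposition \ref{prop:A} and Theorem \ref{thm:2} and then identify $T$ with the integration map by uniqueness, whereas the paper constructs the map first and leaves uniqueness to the earlier categorical results; the mathematical content is the same.
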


\begin{proof} (1)
Every function $f$ in $\bfS_{\homo}(\II_A)$ is of the form $\displaystyle f = \sum\nolimits_{i} b_i \mathbf{1}_{I_i}$. Consider the map
\[ \tilde{T}: \bfS_{\homo}(\II_A) \to B, ~ f\mapsto \sum\nolimits_{i} b_i \mu_{\II_A}(I_i). \]
We need show that $\tilde{T} \in \Hom_{\scrN^1_A}((\bfS_{\homo}(\II_A), \mathbf{1}_{\II_A}, \gamma_{\xi}), (B, \mu_{\II_A}(\II_A)1_B, \ave))$.
First of all, for all $a\in A$, $b\in B$, we have
\begin{align}
  \tilde{T}(a.f.b)
& = \tilde{T}\Big(\homo(a) \Big(\sum\nolimits_{i} b_i \mathbf{1}_{I_i}\Big) b\Big)
  = \tilde{T}\Big(\sum\nolimits_{i} \homo(a) b_i \mathbf{1}_{I_i}b\Big) \nonumber \\
& \mathop{=}\limits^{\spadesuit}
    \tilde{T}\Big(\sum\nolimits_{i} \homo(a) b_i b\mathbf{1}_{I_i}\Big)
  = \sum\nolimits_{i} \homo(a) b_i b \mu_{\II_A}(I_i) \label{formula: thm-3}
\end{align}
where, $\spadesuit$ is given by $\mathbf{1}_{I_i}b=b\mathbf{1}_{I_i}$, which can be proved by using the definition of $\mathbf{1}_{I_i}$ and two trivial facts $1_Bb=b=b1_B$ and $0_Bb=0_B=b0_B$.
Recall that $B$ is the tensor ring $\alg(\Q_B,\vecd_B,\vecg_B,\FF,(\FF_i)_{i\in(\Q_B)_0})$ (see Subsection \ref{sect:norm tens ring}), there is a family of elements $\{k_{\wp} \in A_{\wp} \mid \wp \in (\Q_B)_{\>= 0}\}$ such that
\[ b = \sum_{\wp =a_{\wp,1}a_{\wp,2}\cdots a_{\wp,\ell} \in (\Q_B)_{\>= 0}} k_{\wp}, \]
where
\[ A_{\wp}
     = \bigotimes_{j=1}^{\ell}
          \FF_{\fcts(a_{\wp,j})}
          \otimes_{
              \FF_{\fcts(a_{\wp,j})} \cap \FF_{\fctt(a_{\wp,j})}
            }
          \FF_{\fctt(a_{\wp,j})}^{g_{ a_{\wp,j} }},
\]
and each $g_{ a_{\wp,j} }$ is an $\FF$-automorphism in the Galois group $\Gal(\FF_{\fcts(a_{\wp,j})} \cap \FF_{\fctt(a_{\wp,j})} / \FF)$. Thus, we have $k_{\wp} \mu_{\II_A}(I_i) = g_{ a_{\wp,1} } \compos g_{ a_{\wp,2} } \compos \cdots \compos g_{ a_{\wp,t} } (\mu_{\II_A}(I_i)) k_{\wp}$.
Since $\FF$ is an extension of $\RR$ and $\mu_{\II_A}(I_i)\in \RR$ is also an element in $\FF$, we obtain
$g_{ a_{\wp,1} } \compos g_{ a_{\wp,2} } \compos \cdots \compos g_{ a_{\wp,t} } (\mu_{\II_A}(I_i)) = \mu_{\II_A}(I_i)$,
and then $k_{\wp} \mu_{\II_A}(I_i) = \mu_{\II_A}(I_i) k_{\wp}$. It follows that
\[ b \mu_{\II_A}(I_i) = \mu_{\II_A}(I_i) b. \]
By using (\ref{formula: thm-3}), we obtain
\[ \tilde{T}(a.f.b) = \sum\nolimits_{i} \homo(a) b_i \mu_{\II_A}(I_i)  b
   = \homo(a) \bigg( \sum\nolimits_{i}  b_i \mu_{\II_A}(I_i) \bigg) b
   = a. \tilde{T}(f).b. \]
One can prove that $\tilde{T}$ is $\FF$-linear. Therefore, $\tilde{T}$ is an $(A,B)$-homomorphism.

Second, by the definition of $\tilde{T}$, we immediately obtain
\[ \tilde{T}(\mathbf{1}_{\II_A}) = \tilde{T}(1_B\mathbf{1}_{\II_A}) = \mu(\II_A)1_B, \]
which admits \ref{H1}.

Third, for any $\displaystyle (f_t)_{1 \=< t \=< 2^{\dimA}}
  = \Big(\sum\nolimits_{i} b_{ti} \mathbf{1}_{I_i} \Big)_{1 \=< t \=< 2^{\dimA}}
  \in \bfS_{\homo}(\II_A)^{\oplus_p 2^{\dimA}}$, we have
\begin{align*}
  & \ave( \tilde{T}^{\oplus 2^{\dimA}}((f_t)_{1 \=< t \=< 2^{\dimA}}) ) \\
=~& \ave((\tilde{T}(f_t))_{1 \=< t \=< 2^{\dimA}})
= \ave\Big(\Big(
   \sum\nolimits_{i} \mu_{\II_A}(I_i) b_{ti}
  \Big)_{1 \=< t \=< 2^{\dimA}}
  \Big) \\
=~& \sum\nolimits_{t}\frac{ \mu_{\II_A}(I_t)}{\mu_{\II_A}(\II_A)}
      \sum\nolimits_{i} \mu_{\II_A}(I_i) b_{ti}
\end{align*}
and
\begin{align*}
  & \tilde{T}(\gamma_{\xi}((f_t)_{1 \=< t \=< 2^{\dimA}}) )
= \tilde{T}
      \Big(
        \sum\nolimits_{t}
          f_t\mathbf{1}_{\II_t}
       \Big)
= \tilde{T}
      \Big(
        \sum\nolimits_{t}
            \sum\nolimits_{i} b_{ti}\mathbf{1}_{I_i}
       \Big) \\
=~& \tilde{T}
      \Big(
        \sum\nolimits_{t}
           \sum\nolimits_{i}
             \frac{\mu_{\II_A}(I_i)}{\mu_{\II_A}(\II_A)}
               b_{ti}\mathbf{1}_{I_i}
       \Big)
= \sum\nolimits_{t}
    \sum\nolimits_{i}
      \frac{\mu_{\II_A}(I_i)}{\mu_{\II_A}(\II_A)}
        \mu_{\II_A}(I_i) b_{ti}.
\end{align*}
Thus, $ \ave \compos \tilde{T}^{\oplus 2^{\dimA}} = \tilde{T} \compos \gamma_{\xi}$, i.e., \ref{H2} holds.

Therefore, $T$ is a morphism in
$\Hom_{\scrN^1_A}((\bfS_{\homo}(\II_A), \mathbf{1}_{\II_A}, \gamma_{\xi}),
(B, \mu_{\II_A}(\II_A)1_B, \ave))$, and the completion $\w{\bfS_{\homo}(\II_A)}$ of $\bfS_{\homo}(\II_A)$ induces that $\w{T}$ is a morphism in
$\Hom_{\scrA^1_A}((\bfS_{\homo}(\II_A), \mathbf{1}_{\II_A}, \gamma_{\xi}),
(B,$ $\mu_{\II_A}(\II_A)1_B, \ave))$ as required. We have completed the proof of (1).

(2) We have proved that $T$ satisfies \ref{I1} in the proof of (1), then it is clear that $\w{T}$ satisfies \ref{I1} by the completion $\w{\bfS_{\homo}(\II_A)}$ of $\bfS_{\homo}(\II_A)$.
Moreover, for each $p\>=1$ and $h \in \w{\bfS_{\homo}(\II_A)}$, $\Vert h \Vert_{B,p} 1_B$ is also a function in $\w{\bfS_{\homo}(\II_A)}$, then $\Vert h \Vert_{B,p} 1_B$ can be seen as a projective limit $\invlim \Vert h_t\Vert_{B,p} 1_B$ of some Cauchy sequence $\Vert h_t\Vert_{B,p} 1_B$, where, for each $t$, we have $h_t \in E_{u(t)}$, and so $\Vert h_t \Vert_{B,p} 1_B$ can be written as a finite sum
\[ \Vert h_t \Vert_{B,p} 1_B =  \Vert h_t \Vert_{B,p} \mathbf{1}_{\II_A}
  = \sum_{i\in J} y_{ti} \mathbf{1}_{I_i}
  \text{~with~} y_{ti} \in\RR^{\>=0}, \]
where $J$ is a finite index set. Thus,
\begin{align}\label{formula: thm-3 I}
  T(\Vert h_t \Vert_{B,p} 1_B )
 = T|_{E_{u(t)}}(\Vert h_t \Vert_{B,p} 1_B )
 = \sum_{i\in J} \mu_{\II_A}(I_i)y_{ti} 1_B
 = \bigg(\sum_{i\in J} \mu_{\II_A}(I_i)y_{ti}\bigg) 1_B
\end{align}
which is of the form $\omega_t 1_B$ lying in $\RR^{\>=0} 1_B$.

Notice that the norm $\Vert T|_{E_0} \Vert$ of $T|_{E_0}$, as an $\FF$-linear map, is
\[ \sup_{ { f\in E_0 \cong B  \atop \Vert f \Vert_{E_0} = 1 }  } \Vert T|_{E_0}(f) \Vert
= \Vert T|_{E_0}(\mathbf{1}_{\II_A}) \Vert_{B,p} = (\mu_{\II_A}(\II_A)^p)^{\frac{1}{p}}
= \mu_{\II_A}(\II_A), \]
and, for each $u\in\NN$, $\Vert T|_{E_u} \Vert = \mu_{\II_A}(\II_A)$ yields
\begin{align}
  \Vert T|_{E_{u+1}} \Vert
& = \sup_{ { f\in E_{u+1} \atop \Vert f \Vert_{E_{u+1}} = 1 } }
    \Vert T|_{E_{u+1}}(f) \Vert_{B,p} \nonumber \\
& = \sup_{
        \sum\limits_{i=1}^{2^{\dimA}}
      \left(\left(
        \frac{\mu_{\II_A}(\II_i)}{\mu_{\II_A}(\II_A)}
      \right)^p \Vert f_i \Vert_{\bfS_{\homo}(\II_A)}^p\right)^{\frac{1}{p}} = 1 }
    \bigg\Vert T|_{E_{u}}
    \bigg(
      \sum_{i=1}^{2^{\dimA}}
        \frac{\mu_{\II_A}(\II_i)}{\mu_{\II_A}(\II_A)}
          (f_i\mathbf{1}_{\II_i})
    \bigg)
    \bigg\Vert_{B,p}
\mathop{=}\limits^{\clubsuit} \Vert T|_{E_u} \Vert,  \label{formula: thm-3 II}
\end{align}
where $\clubsuit$ is given by the formula
\[ \sum_{i=1}^{2^{\dimA}}
        \bigg(\frac{\mu_{\II_A}(\II_i)}{\mu_{\II_A}(\II_A)}\bigg)^p
          \Vert f_i \Vert_{\bfS_{\homo}(\II_A)}^p
 = \bigg\Vert
     \sum_{i=1}^{2^{\dimA}}
        \frac{\mu_{\II_A}(\II_i)}{\mu_{\II_A}(\II_A)}
          (f_i\mathbf{1}_{\II_i})
   \bigg\Vert_{E_{u+1}}\]
that is given by the definition of the norm $\Vert\cdot\Vert_{E_{u+1}} : E_{u+1} {\xymatrix@C=2cm{\ar[r]^{\text{Lemma \ref{lemm:Eu}}}_{\cong} & }} E_{u}^{\oplus_p 2^{\dimA}} \to \RR^{\>=0}$ shown in Lemma \ref{lemm:dir sum},
Then (\ref{formula: thm-3 II}) shows
\[ \mu_{\II_A}(\II_A) = \Vert T|_{E_0} \Vert = \Vert T|_{E_1} \Vert = \cdots = \Vert T|_{E_u} \Vert = \cdots \]
by induction. Thus, $\Vert \w{T} \Vert = \mathop{\invlim}\limits_{u} \Vert T|_{E_u} \Vert = \mu_{\II_A}$, i.e., the morphism $\w{T}$, as an $\FF$-linear map defined on $\bfS_{\homo}(\II_A)$, is bounded.
It follows that
\begin{align}\label{formula: thm-3 III}
  \w{T}(\invlim~h_t) = \invlim~ \w{T}(h_t)
\end{align}
holds for all Cauchy sequences $\{h_t\}_{t\in\NN}$.
Then (\ref{formula: thm-3 I}) yields
\[ T(\Vert h \Vert_{B,p} 1_B )
 = \mathop{\invlim}\limits_{t} T(\Vert h_t \Vert_{B,p} 1_B )
 = \mathop{\invlim}\limits_{t} \bigg(\sum_{i\in J} \mu_{\II_A}(I_i)y_{ti}\bigg) 1_B
 = (\mathop{\invlim}\limits_{t} \omega_t) 1_B \in \RR^{\>=0}1_B. \]
Of course \ref{I2} holds in the case for $p=1$. Furthermore, \ref{I3} is a direct corollary of (\ref{formula: thm-3 III}). We have completed this proof.
\end{proof}

Obviously, when $p=1$, $\FF=\RR=B$, $A=\alg(\Q_A,\vecd_A,\vecg_A,\RR,(\RR_i=\RR)_{i\in(\Q_A)_0})$ with $(\Q_A)_1=\varnothing$, all components of $\vecd_A$ is $1$, and all components of $\vecg_A$ is $\mathrm{id}_{\RR}$, then \ref{I1}, \ref{I2}, and \ref{I3} yield \ref{D1}, \ref{D2}, and \ref{D3}, respectively.
In this case, the $(A,B)$-homomorphism $\w{T}$ given in Theorem \ref{thm:3} provides a categorification of Daniell integral.

\subsection{Bochner integrations} \label{subsect:Boch int}

Let $X = (X,\mu)$ be a completed Banach space with Lebesgue measure $\mu$ and $f:\Omega\to \CC^n$ a vector-valued function. If $f$ is the limit of a sequence $\{s_u(x)\}_{u=1}^{+\infty}$ of some countable valued functions
$\displaystyle s_u = \sum_{i=1}^{+\infty} y_i \mu(I_i)$
(i.e., the sequence of some such functions whose images are countable sets),
where $\displaystyle y_i\in\CC^n$, $\displaystyle \Omega=\bigcup_{i=1}^{m_u} I_i$ is a disjoint union such that
$\displaystyle \sum_{i=1}^{+\infty} \Vert y_i \Vert \mu(I_i) < +\infty$,
and the multiple integral
$\displaystyle \lim\limits_{u \to +\infty} \int_{\Omega} \Vert f-s_u\Vert \dd\mu$
converges to zero,
then the {\defines Bochner integral} of $f$ is defined as
\[ (\Bochner)\int_{\Omega} f \dd\mu
:= \lim_{u \to +\infty}(\Bochner)\int_{\Omega} s_u \dd\mu
:= \lim_{u \to +\infty} \sum_{i=1}^{m_u} y_i \mu(I_i), \]
see \cite{B1933Integration}.

In Theorem \ref{thm:3}, take $\FF=\RR$, and we assume that $A=\alg(\Q_A,\vecd_A,\vecg_A,\EE,(\FF_i)_{i\in(\Q_A)_0})$ is a semi-simple $\RR$-algebra with $\vecd_A=(1,1,\ldots,1)$, $\vecg_A = (\mathrm{id}_{\EE}, \mathrm{id}_{\EE}, \ldots, \mathrm{id}_{\EE})$, and $\EE=\FF_i=\RR$;
$B=\alg(\Q_B,\vecd_B,\vecg_B,\EE,(\FF_j)_{j\in(\Q_B)_0})$ is a semi-simple $\RR$-algebra with
with $\vecd_B=(1,1,\ldots,1)$, $\vecg_B = (\mathrm{id}_{\EE}, \mathrm{id}_{\EE}, \ldots, \mathrm{id}_{\EE})$,
$\EE=\FF_j=\RR$; and $\homo: A \to B$ is zero.
Then $A = \RR^{\dimA}$ and $B = \RR^{\dimB}$ are Euclidean spaces, $\II_A = [c,d]^{\times\dimA}$, and the morphism $\w{T}$ describes vector valued integration.
Furthermore, if $\mu_{\II_A} = \mu_{\Lebesgue}$ is a Lebesgue measure, then
\begin{align}\label{formula: Boch int}
  \w{T}(f) = (\Bochner)\int_{\II_A} f \dd\mu \text{~for all~} f \in \w{\bfS_{\homo}(\II_A)},
\end{align}
i.e., $\w{T}(f)$ is the Bochner integral of $f$.

\subsection{Lebesgue integrations} \label{subsect:Leb int}

Keep the notations from Subsection \ref{subsect:Boch int}, if $\dimB = 1$, then, for any $f\in\w{\bfS_{\homo}(\II_A)}$, (\ref{formula: Boch int}) describes the multiple Lebesgue integral $\w{T}(f)$ of $f$.
Canonical Lebesgue integration is defined in the sense of $\dimA=\dimB=1$, and in this case,
\begin{center}
  $\bfS_{\homo}(\II_A) \cong L_1([c,d])$
\end{center}
is $L_1$-space, see \cite{Lei2023FA} and \cite[Subsection 10.1]{LLHZ2025}. Canonical Lebesgue integration is described by a unique morphism lying in $\Hom_{\scrA^1_A}( (L_1([c,d]),\mathbf{1}_{[c,d]}, \gamma_{\frac{c+d}{2}}), (\RR, 1, \ave))$,
where $\ave: \RR \oplus_1 \RR \to \RR$ sends each $(r_1,r_2) \in \RR\oplus_1 \RR$ to the average $\frac{r_1+r_2}{2}$ of $r_1$ and $r_2$.

\section{Applications II: Approximations} \label{sect:app-approx}

\def\bfX{\mathbf{X}}

Let $\bfX_0$ be an $(A,B)$-submodule of $\Func(\II_A)$ containing $\mathbf{1}_{\II_A}: \II_A \to \{1_B\}$ such that $\w{\bfX_0} \subseteq \w{\bfS_{\homo}(\II_A)}$. For any $u\in \NN$, define
\[ \bfX_u = \{ \gamma_{\xi}|_{\bfX_{u-1}}(\pmb{f}) \mid \pmb{f}
= (f_1,f_2,\ldots, f_{2^{\dimA}}) \in \bfX_{u-1}^{\oplus_p 2^{\dim A}} \}. \]
Then for any $u\in\NN$, we have $\bfX_u \subseteq \w{\bfS_{\homo}(\II_A)}$ is a normed $(A,B)$-submodule whose norm is the restriction $\Vert\cdot\Vert_{\bfX_u} = \Vert\cdot\Vert_{\w{\bfS_{\homo}(\II_A)}} |_{\bfX_u}$ of $\Vert\cdot\Vert_{\w{\bfS_{\homo}(\II_A)}}$.
Furthermore, we have
\[ \xymatrix{
  \bfX_0 \ar[r]^{\subseteq}
& \bfX_1 \ar[r]^{\subseteq}
& \cdots \ar[r]^{\subseteq}
& \bfX_u \ar[r]^{\subseteq}
& \cdots & (\subseteq \w{\bfS_{\homo}(\II_A)}).
} \]
Denote by $\bfX^{\lim} := \dirlim \bfX_u$. In this section, we show Stone--Weierstrass Theorem in $\scrA^p_{\homo}$.

\subsection{Stone--Weierstrass Approximation Theorem} \label{subsect:SWThm}

Classically, the Stone--Weierstrass Approximation Theorem states that any continuous function on a compact interval can be uniformly approximated by polynomials. Its original version can be found in \cite{Weierstrass1885}, and later, Cambridge University Press printed a new version in 2013, see \cite{Weierstrass2013}. Stone extended the works of Weierstrass in \cite{Stone1948}, proposing an algebraic approximation framework for compact spaces where the ``separation of points'' condition serves as a substitute for polynomial constraints. Here, we provide a categorical formulation of this result in the context of normed $(A,B)$-bimodules.

\begin{proposition} \label{prop:(X,1,gamma)}
The triple $(\bfX^{\lim}, \mathbf{1}_{\II_A}, \gamma_{\xi}|_{\bfX^{\lim}})$ is an object in $\scrN^p_{\homo}$.
\end{proposition}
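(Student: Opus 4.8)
The plan is to check the three conditions \ref{Np1}, \ref{Np2}, \ref{Np3} of Definition \ref{def:Np} for the triple $(\bfX^{\lim},\mathbf{1}_{\II_A},\gamma_{\xi}|_{\bfX^{\lim}})$, exploiting throughout that $\bfX^{\lim}=\dirlim\bfX_u=\bigcup_{u\in\NN}\bfX_u$ is realised inside $\w{\bfS_{\homo}(\II_A)}$ with the restricted norm, so that each verification reduces either to a property already proved for $\w{\bfS_{\homo}(\II_A)}$ or to the very definition of the chain $\bfX_0\subseteq\bfX_1\subseteq\cdots$. For \ref{Np1}: by hypothesis every $\bfX_u$ is a normed $(A,B)$-submodule of $\w{\bfS_{\homo}(\II_A)}$ with $\Vert\cdot\Vert_{\bfX_u}=\Vert\cdot\Vert_{\w{\bfS_{\homo}(\II_A)}}\big|_{\bfX_u}$ and the inclusions are norm-preserving, so $\bfX^{\lim}$ is again an $(A,B)$-submodule of $\w{\bfS_{\homo}(\II_A)}$ carrying the restricted norm; conditions \ref{N1} and \ref{N2} for this norm are then inherited verbatim from the $\homo$-norm of $\w{\bfS_{\homo}(\II_A)}$, so $\bfX^{\lim}$ is a $\homo$-normed $(A,B)$-bimodule.

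For \ref{Np2}: the constant function $\mathbf{1}_{\II_A}\colon\II_A\to\{1_B\}$ lies in $\bfX_0\subseteq\bfX^{\lim}$ by hypothesis, and since $\bfX_0$ is a right $B$-submodule every constant function $\mathbf{1}_{\II_A}.b=b\mathbf{1}_{\II_A}$ ($b\in B$) also lies in $\bfX_0$, whence $E_0\subseteq\bfX_0\subseteq\bfX^{\lim}$. Composing the $(A,B)$-isomorphism $\eta\colon B\xrightarrow{\cong}E_0$ of Lemma \ref{lemm:E0} with the inclusion $E_0\hookrightarrow\bfX^{\lim}$ gives an $(A,B)$-homomorphism $\Pc\colon B=B^{\times\{\ast\}}\to\bfX^{\lim}$, $b\mapsto\mathbf{1}_{\II_A}.b$, with $\Pc(1_B)=\mathbf{1}_{\II_A}$. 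Finally $\Vert\mathbf{1}_{\II_A}\Vert_{\bfX^{\lim}}=\Vert\mathbf{1}_{\II_A}\Vert_{\w{\bfS_{\homo}(\II_A)}}=\mu_{\II_A}(\II_A)$, exactly as computed in the proof of Proposition \ref{prop:bfS lies in Np}, so the size constraint $\Vert\mathbf{1}_{\II_A}\Vert\=<\mu_{\II_A}(\II_A)$ holds.

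For \ref{Np3}: first, $\gamma_{\xi}$ does restrict to a map $(\bfX^{\lim})^{\oplus_p 2^{\dimA}}\to\bfX^{\lim}$, since for $\pmb{f}=(f_1,\dots,f_{2^{\dimA}})$ with $f_i\in\bfX_{u_i}$, putting $u=\max_i u_i$ gives $\pmb{f}\in\bfX_u^{\oplus_p 2^{\dimA}}$ (the chain is increasing), hence $\gamma_{\xi}(\pmb{f})\in\bfX_{u+1}\subseteq\bfX^{\lim}$ by the definition of $\bfX_{u+1}$. This restriction is an $(A,B)$-homomorphism, being a restriction of $\w{\gamma}_{\xi}$, and $\gamma_{\xi}|_{\bfX^{\lim}}((\mathbf{1}_{\II_A})_{1\times 2^{\dimA}})=\mathbf{1}_{\II_A}$, because juxtaposing $2^{\dimA}$ copies of the constant function $\mathbf{1}_{\II_A}$ over the subdivision $\{\II_i\}$ of $\II_A$ reproduces $\mathbf{1}_{\II_A}$ (the computation already used in the proofs of Propositions \ref{prop:bfS lies in Np} and \ref{prop:existence}). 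For boundedness I would use that $\w{\gamma}_{\xi}$ is a bounded (indeed isometric) $(A,B)$-isomorphism $\w{\bfS}^{\oplus_p 2^{\dimA}}\to\w{\bfS_{\homo}(\II_A)}$ — this is how the norm on $\w{\bfS_{\homo}(\II_A)}$ is assembled from the norms of the $E_u$ via Lemmas \ref{lemm:Eu} and \ref{lemm:limits}, and is part of $(\w{\bfS_{\homo}(\II_A)},\mathbf{1}_{\II_A},\w{\gamma}_{\xi})$ being an object of $\scrA^p_{\homo}$. Since $(\bfX^{\lim})^{\oplus_p 2^{\dimA}}$ is a normed subspace of $\w{\bfS}^{\oplus_p 2^{\dimA}}$ (same weights, restricted slot norms) and $\gamma_{\xi}|_{\bfX^{\lim}}$ is the restriction of $\w{\gamma}_{\xi}$ to it, $\gamma_{\xi}|_{\bfX^{\lim}}$ is bounded and $\FF$-linear; by the remark recorded in \ref{Np3}, boundedness then gives $\gamma_{\xi}|_{\bfX^{\lim}}(\invlim x_i)=\invlim\gamma_{\xi}|_{\bfX^{\lim}}(x_i)$ for every Cauchy sequence $\{x_i\}$ in $\w{(\bfX^{\lim})^{\oplus_p 2^{\dimA}}}$.

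The main obstacle I anticipate is the boundedness step in \ref{Np3}: it requires reconciling the weighted $\oplus_p$-norm on $(\bfX^{\lim})^{\oplus_p 2^{\dimA}}$ (with the measure weights of Lemma \ref{lemm:dir sum}) with the restriction of the norm of $\w{\bfS_{\homo}(\II_A)}$, and confirming that $\gamma_{\xi}$ is norm-nonincreasing along the whole chain $\{\bfX_u\}$, not merely on the model pieces $\{E_u\}$ where this was established. Everything else is a routine transfer of structure from $\w{\bfS_{\homo}(\II_A)}$ together with the definition of the $\bfX_u$.
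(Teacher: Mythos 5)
Your proposal is correct and follows essentially the same route as the paper: one verifies \ref{Np1}--\ref{Np3} by realising $\bfX^{\lim}=\dirlim\bfX_u$ inside $\w{\bfS_{\homo}(\II_A)}$ with the restricted norm and using $\mathbf{1}_{\II_A}\in\bfX_0$ to produce the map $\Pc$ for \ref{Np2}. You are in fact more careful than the paper on \ref{Np3} (which the paper dismisses as trivial because $\bfX^{\lim}$ is a submodule), since you spell out why $\gamma_{\xi}$ carries $(\bfX^{\lim})^{\oplus_p 2^{\dimA}}$ into $\bfX^{\lim}$ via the defining relation $\bfX_{u+1}=\Ima(\gamma_{\xi}|_{\bfX_u})$ and why the restriction remains bounded.
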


\begin{proof}
By the definitions of $\bfX_u$ ($u\in\NN$) and $\bfX^{\lim}$, it holds that $\bfX^{\lim}$ is a normed $(A,B)$-bimodule, then \ref{Np1} holds. Here, the norm $\Vert\cdot\Vert_{\bfX^{\lim}} = \Vert\cdot\Vert_{\w{\bfS_{\homo}(\II_A)}}|_{\bfX^{\lim}}$ is induced by the inductive limit $\dirlim \Vert\cdot\Vert_{\bfX_u} = \dirlim~\Vert\cdot\Vert_{\w{\bfS_{\homo}(\II_A)}}|_{\bfX_u}$ given by $\bfX^{\lim} = \dirlim \bfX_u$.
Since $\bfX_0$ contains $\mathbf{1}_{\II_A}$, we have $\mathbf{1}_{\II_A} \in \bfX_u$ for all $u\in\NN$, and so we have $\mathbf{1}_{\II_A} \in \dirlim \bfX_{u}$. Then we obtain a homomorphism
\[ \Pc: B \cong  \mathbf{1}_{\II_A} B \longrightarrow \dirlim\bfX_u \]
which is induced by $\mathbf{1}_{\II_A} B \subseteq \bfX_0 \subseteq \bfX_u  \subseteq \bfX^{\lim}=\dirlim \bfX_u $. Thus, \ref{Np2} holds by the fact $\Pc(1_B) = \mathbf{1}_{\II_B}$.
\ref{Np3} is trivial since $\bfX^{\lim}$ is a submodule of $\w{\bfS_{\homo}(\II_A)}$. Therefore, $(\bfX^{\lim}, \mathbf{1}_{\II_A}, \gamma_{\xi}|_{\bfX^{\lim}})$ is an object in $\scrN^p_{\homo}$.
\end{proof}

The following corollary provides a categorical description of the Stone--Weierstrass Approximation Theorem.

\begin{corollary}[{Stone--Weierstrass Approximation Theorem}] \label{coro:SWThm}
\[ \sharp \Hom_{\scrA^p_{\homo}}( (\w{\bfS_{\homo}(\II_A)}, \mathbf{1}_{\II_A}, \w{\gamma}_{\xi}),
(\w{\bfX^{\lim}}, \mathbf{1}_{\II_A}, \w{\gamma_{\xi}|_{\bfX^{\lim}}} ) ) = 1. \]
\end{corollary}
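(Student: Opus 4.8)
The plan is to deduce Corollary \ref{coro:SWThm} from Theorem \ref{thm:1} together with Proposition \ref{prop:(X,1,gamma)}, in exactly the same way that Theorem \ref{thm:2} was deduced from Theorem \ref{thm:1}. First I would observe that by Proposition \ref{prop:(X,1,gamma)} the triple $(\bfX^{\lim}, \mathbf{1}_{\II_A}, \gamma_{\xi}|_{\bfX^{\lim}})$ is an object in $\scrN^p_{\homo}$, and then passing to completions it follows, as in the passage from Proposition \ref{prop:bfS lies in Np} to the discussion preceding Proposition \ref{prop:existence}, that $(\w{\bfX^{\lim}}, \mathbf{1}_{\II_A}, \w{\gamma_{\xi}|_{\bfX^{\lim}}})$ is an object in $\scrA^p_{\homo}$; here the hypothesis $\w{\bfX_0} \subseteq \w{\bfS_{\homo}(\II_A)}$ guarantees that the completion is taken inside $\w{\bfS_{\homo}(\II_A)}$ so that the norm and the bimodule structure are the restricted ones. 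Once this object is in $\scrA^p_{\homo}$, Theorem \ref{thm:1} asserts that $(\w{\bfS_{\homo}(\II_A)}, \mathbf{1}_{\II_A}, \w{\gamma}_{\xi})$ is an initial object of $\scrA^p_{\homo}$, and so the Hom-set in the statement is a singleton by the very definition of an initial object.

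In slightly more detail, the key steps in order are: (1) invoke Proposition \ref{prop:(X,1,gamma)} to place $(\bfX^{\lim}, \mathbf{1}_{\II_A}, \gamma_{\xi}|_{\bfX^{\lim}})$ in $\scrN^p_{\homo}$; (2) show that its completion $(\w{\bfX^{\lim}}, \mathbf{1}_{\II_A}, \w{\gamma_{\xi}|_{\bfX^{\lim}}})$ is an object of $\scrA^p_{\homo}$, checking that \ref{Np1} holds for the complete normed $(A,B)$-bimodule $\w{\bfX^{\lim}}$, that \ref{Np2} holds since $\mathbf{1}_{\II_A}$ still lies in $\w{\bfX^{\lim}}$ and the homomorphism $\Pc: B \to \w{\bfX^{\lim}}$ of Proposition \ref{prop:(X,1,gamma)} extends, and that \ref{Np3} holds because $\w{\gamma_{\xi}|_{\bfX^{\lim}}}$ is bounded (being a restriction of $\w{\gamma}_{\xi}$, which is an isometry onto its image by Lemma \ref{lemm:Eu} and Lemma \ref{lemm:limits}); (3) apply Theorem \ref{thm:1}: since $(\w{\bfS_{\homo}(\II_A)}, \mathbf{1}_{\II_A}, \w{\gamma}_{\xi})$ is initial in $\scrA^p_{\homo}$, there is exactly one morphism from it to any object of $\scrA^p_{\homo}$, in particular to $(\w{\bfX^{\lim}}, \mathbf{1}_{\II_A}, \w{\gamma_{\xi}|_{\bfX^{\lim}}})$. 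This gives $\sharp \Hom_{\scrA^p_{\homo}}( (\w{\bfS_{\homo}(\II_A)}, \mathbf{1}_{\II_A}, \w{\gamma}_{\xi}), (\w{\bfX^{\lim}}, \mathbf{1}_{\II_A}, \w{\gamma_{\xi}|_{\bfX^{\lim}}}) ) = 1$.

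I expect the main obstacle to be step (2): verifying carefully that $\w{\bfX^{\lim}}$ genuinely is an object of $\scrA^p_{\homo}$ rather than merely a normed submodule. One has to be sure that the direct-limit construction $\bfX^{\lim} = \dirlim \bfX_u$ behaves well under completion — i.e. that completing the union of the increasing chain $\bfX_0 \subseteq \bfX_1 \subseteq \cdots$ inside $\w{\bfS_{\homo}(\II_A)}$ yields a \emph{Banach} $(A,B)$-bimodule on which the restricted juxtaposition map $\gamma_{\xi}$ is well-defined, bounded, and satisfies $\gamma_{\xi}((\mathbf{1}_{\II_A})_{1\times 2^{\dimA}}) = \mathbf{1}_{\II_A}$ — and that the condition \ref{Np2} survives, namely that the composite $B \cong \mathbf{1}_{\II_A}B \to \bfX^{\lim} \subseteq \w{\bfX^{\lim}}$ is still an $(A,B)$-homomorphism sending $(1_B)$ to $\mathbf{1}_{\II_A}$. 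All of this is parallel to the arguments already carried out for $\w{\bfS_{\homo}(\II_A)}$ in Section \ref{sect:two cats} (in particular Lemma \ref{lemm:limits} and the remarks following Proposition \ref{prop:bfS lies in Np}), so the proof is essentially a matter of transporting those arguments along the inclusion $\bfX^{\lim} \hookrightarrow \w{\bfS_{\homo}(\II_A)}$; the categorical conclusion in step (3) is then immediate and requires no new work.
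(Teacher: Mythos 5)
Your proposal follows exactly the same route as the paper: Proposition \ref{prop:(X,1,gamma)} places $(\bfX^{\lim}, \mathbf{1}_{\II_A}, \gamma_{\xi}|_{\bfX^{\lim}})$ in $\scrN^p_{\homo}$, completion then yields an object of $\scrA^p_{\homo}$, and the initiality of $(\w{\bfS_{\homo}(\II_A)}, \mathbf{1}_{\II_A}, \w{\gamma}_{\xi})$ from Theorem \ref{thm:1} gives the singleton Hom-set. Your step (2) is simply a more explicit spelling-out of the completion step that the paper asserts in one line; the argument is correct and essentially identical.
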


\begin{sloppypar}
\begin{proof}
By Proposition \ref{prop:(X,1,gamma)}, $(\bfX^{\lim}, \mathbf{1}_{\II_A}, \gamma_{\xi}|_{\bfX^{\lim}})$ is an object in $\scrN^p_{\homo}$, then the triple $(\w{\bfX^{\lim}}, \mathbf{1}_{\II_A}, \w{\gamma_{\xi}|_{\bfX^{\lim}}} )$ induced by the completion of $\bfX^{\lim}$ is an object in $\scrA^p_{\homo}$. Thus, this statement holds by Theorem \ref{thm:1}.
\end{proof}
\end{sloppypar}

This categorical version of the Stone-Weierstrass theorem will be applied in the following subsections to power series expansions (Subsection \ref{subsect:PowerSeries}) and Fourier series expansions (Subsection \ref{subsect:FourierSeries}), demonstrating its utility in analysis.

\subsection{Power series expansion} \label{subsect:PowerSeries}

\def\Pow{\mathrm{pow}}
\def\rmT{\mathrm{T}}
\def\Ana{\mathrm{Ana}}

We assume that the following Assumption \ref{assume} holds in this subsection.

\begin{assumption} \label{assume} \rm
$A=B=\FF$, $\homo=\mathrm{id}_{\FF}$, $\II_A=[0,1]$, $\xi=\frac{1}{2}$, $\mu_{\II_A}$ be a Lebesgue measure, and $p=1$.
\end{assumption}

\begin{sloppypar}
If $\FF=\RR$, then we have $\w{{\bfS}_{\homo}(\II_A)} \cong \dirlim E_u \cong L_1([0,1])$ by \cite{Lei2023FA}.
Let $\bfX_u = \mathrm{span}_{\RR}\{ x^t \mid t\in\ZZ, -u\=< t \=< u \}$ for any $u\in\NN$, then the $\RR$-action
\[ A \times \bfX_u \to \bfX_u, ~ \bigg(r,\sum_{i=-u}^{+u} r_ux^u \bigg) \mapsto \sum_{i=-u}^{+u} rr_ux^u \]
both a left $A$-action and a right $B$-action, i.e., $\bfX_u$ is a normed $(\RR,\RR)$-bimodule in this case.
Thus, $\bfX^{\lim} \cong \w{\RR[x,x^{-1}]}$ is also a normed $(\RR,\RR)$-bimodule. Here, the norm defined on $\bfX_u$ is the restriction $\Vert\cdot\Vert_{L_1([0,1])}|_{\bfX_u}$ and the norm defined on $\bfX^{\lim}$ is the restriction $\Vert\cdot\Vert_{L_1([0,1])}|_{\bfX^{\lim}}$.
By canonical analysis, it is well-known that $\RR[x,x^{-1}]$ is dense in $L_1([0,1])$, then we have
\begin{align}\label{iso:R[x]-L1}
  \w{\RR[x,x^{-1}]} \cong L_1([0,1]),
\end{align}
and so the $(\RR,\RR)$-homomorphism
\[ H_{\Pow}: (L_1([0,1]), \mathbf{1}_{[0,1]}, \w{\gamma}_{\frac{1}{2}})
  \to (\w{\RR[x,x^{-1}]}, x^0, \w{\gamma_{\frac{1}{2}}|_{\RR[x,x^{-1}]}} ), \]
as an $\RR$-linear map, is a unique morphism in
$\Hom_{\scrA_{\mathrm{id}_{\RR}}^1}
( (L_1([0,1]), \mathbf{1}_{[0,1]}, \w{\gamma}_{\frac{1}{2}}),
(\w{\RR[x,x^{-1}]}, x^0,$ $\w{\gamma}_{\frac{1}{2}}|_{\RR[x,x^{-1}]} )
)$ by Corollary \ref{coro:SWThm}, and (\ref{iso:R[x]-L1}) yields that $H_{\Pow}$ is an $\RR$-linear isomorphism.

On the other hand, for each analytic function $f$ in $L_1([0,1])$, it has a Taylor series expansion
\[ \rmT: f(x) \mapsto \sum_{n=0}^{+\infty} \frac{x^n}{n!}\frac{\dd^n}{\dd x^n}f(0) \]
which can be viewed as a map
\[ \rmT: \Ana([0,1]) \to \w{\RR[x,x^{-1}]}, ~
   f(x) \mapsto \sum_{n=0}^{+\infty} \frac{\mathbf{1}_{[0,1]}x^n}{n!}\frac{\dd^n}{\dd x^n}f(0)\]
where $\Ana([0,1])$ is the set of all analytic functions in $L_1([0,1])$.
One can check that $\rmT$ is an $(\RR,\RR)$-homomorphism (i.e., an $\RR$-linear map) such that:
\begin{itemize}
  \item[(1)] $\Ana([0,1])$ is an $(\RR,\RR)$-bimodule with the norm $\Vert\cdot\Vert_{\Ana([0,1])} := \Vert\cdot\Vert_{L_1([0,1])}|_{\Ana([0,1])}$, i.e., \ref{Np1} holds;
  \item[(2)] $\mathbf{1}_{[0,1]} \in \Ana([0,1])$ is a function with norm $\Vert \mathbf{1}_{[0,1]}\Vert = 1$ such that the $\RR$-linear map $\Pc: \RR \to \Ana([0,1])$ induced by $\RR \mathbf{1}_{[0,1]} := \{r\mathbf{1}_{[0,1]} \mid r\in \RR\} \subseteq \Ana([0,1])$ sends $1$ to $\mathbf{1}_{[0,1]}$, i.e., \ref{Np2} holds,
  \item[(3)] $\gamma_{\frac{1}{2}}|_{\Ana([0,1])}$ satisfies \ref{Np3};
  \item[(4)] $\rmT(\mathbf{1}_{[0,1]}) = \mathbf{1}_{[0,1]} x^0 = \mathbf{1}_{[0,1]} \in \RR[x,x^{-1}] \subseteq \w{\RR[x,x^{-1}]}$, i.e., \ref{H1} holds;
  \item[(5)] $\rmT(\w{\gamma}_{\frac{1}{2}}(f_1(x),f_2(x))) = \begin{cases}
      \rmT(f_1(2x)) = \sum\limits_{n=0}^{+\infty}
        \frac{\mathbf{1}_{[0,\frac{1}{2}]}(2x)^n}{n!}\frac{\dd^n}{\dd x^n}f_1(0) ,
    & 0\=< x< \frac{1}{2}; \\
      \rmT(f_2(2x-1)) = \sum\limits_{n=0}^{+\infty}
        \frac{\mathbf{1}_{[\frac{1}{2},1]}(2x-1)^n}{n!}\frac{\dd^n}{\dd x^n}f_2(0) ,
    & \frac{1}{2} \=< x \=<1
    \end{cases}$
    $ = \w{\gamma}_{\frac{1}{2}}( \rmT(f_1(x)), \rmT(f_2(x)))$
    $ = \w{\gamma}_{\frac{1}{2}}\rmT^{\oplus 2} (f_1(x), f_2(x))$,
    i.e., \ref{H2} holds.
\end{itemize}
Therefore, $\rmT$ is a morphism in $\Hom_{\scrN_{\mathrm{id}_{\RR}}^1}
( (L_1([0,1]), \mathbf{1}_{[0,1]}, \w{\gamma}_{\frac{1}{2}}),
(\w{\RR[x,x^{-1}]}, \mathbf{1}_{[0,1]} x^0, \w{\gamma}_{\frac{1}{2}}|_{\RR[x,x^{-1}]} )
)$, and $\w{\rmT}$, the $\RR$-linear map induced by the completion $\w{\RR[x,x^{-1}]}$ of $\RR[x,x^{-1}]$,
is a morphism in $\Hom_{\scrA_{\mathrm{id}_{\RR}}^1}
( (L_1([0,1]), \mathbf{1}_{[0,1]}, \w{\gamma}_{\frac{1}{2}}),
(\w{\RR[x,x^{-1}]}, \mathbf{1}_{[0,1]}x^0, \w{\gamma}_{\frac{1}{2}}|_{\RR[x,x^{-1}]} )
)$ by Theorem \ref{thm:2}.
By the uniqueness, $\rmT = H_{\Pow}$, i.e., the morphism $ H_{\Pow}$ given by Corollary \ref{coro:SWThm} provides a categorification of power series expansions of analytic functions.
\end{sloppypar}

\subsection{Fourier series expansion} \label{subsect:FourierSeries}

\def\rme{\mathrm{e}}
\def\rmi{\mathrm{i}}
\def\Fou{\mathrm{Fou}}

Keep the notations in Assumption \ref{assume} in this subsection, and let $\FF=\CC$ and $\bfX_u = \mathrm{span}_{\CC}\{ \rme^{2t\pi\rmi x} \mid -u\=< t \=< u \}$.
Then $\bfX^{\lim} = \w{\CC[\rme^{\pm2\pi\rmi x}]}$. Notice that it is well-known that $\CC[\rme^{\pm2\pi\rmi x}]$ is a dense $\CC$-subspace of $L_1([0,1])$ in canonical analysis, we obtain
\begin{align} \label{iso:C[e power]-L1}
  \w{\CC[\rme^{\pm2\pi\rmi x}]} \cong L_1([0,1]),
\end{align}
and so the $(\RR,\RR)$-homomorphism
\[ H_{\Fou}: (L_1([0,1]), \mathbf{1}_{[0,1]}, \w{\gamma}_{\frac{1}{2}})
  \to (\w{\CC[\rme^{\pm2\pi\rmi x}]}, \mathbf{1}_{[0,1]}\rme^0,
       \w{\gamma}_{\frac{1}{2}}|_{ \w{\CC[\rme^{\pm2\pi\rmi x}]} } ), \]
as an $\RR$-linear map, is a unique morphism in
$\Hom_{\scrA_{\mathrm{id}_{\RR}}^1}
( (L_1([0,1]), \mathbf{1}_{[0,1]}, \w{\gamma}_{\frac{1}{2}}),
(\w{\CC[\rme^{\pm2\pi\rmi x}]}, \mathbf{1}_{[0,1]}\rme^0,$ $\w{\gamma}_{\frac{1}{2}}|_{ \w{\CC[\rme^{\pm2\pi\rmi x}]} } )
)$ by Corollary \ref{coro:SWThm}.
By using a method similar to Subsection \ref{subsect:PowerSeries}, (\ref{iso:C[e power]-L1}) is an $\RR$-linear isomorphism sending each analytic function $f$ lying in $L_1([0,1])$ to the trigonometric series it.
Furthermore, if $f$ satisfies the Dirichlet Condition, then $H_{\Fou}(f)$ is the Fourier series of $f$.

\section{\texorpdfstring{An example for integration in $\scrA_{\homo}^1$}{An example for integration in A1}} \label{sect:exp}

We provide an example in this section.

\begin{example} \rm \label{exp:1}
Let $(\Q,\vecd=(2,2,1))$ is the weight quiver given in Example \ref{exp:modulation w ne 0} and $ab=bc=ca=0$.
Assume $\FF=\FF_3=\RR$, $\FF_1=\FF_2=\CC$, and $\vecg = (g_a, g_b, g_c) = (\mathrm{id}_{\CC}, \mathrm{id}_{\RR}, \mathrm{id}_{\RR})$. Then the modulation corresponded by the tensor ring $\alg(\Q,\vecd,\vecg,\CC,(\RR)_{i\in\Q_0})$ is
\begin{figure}[H]
\centering
\begin{tikzpicture} 
\draw (-1.73,-1) node{$\CC$} (0,2) node{$\CC$} (1.73,-1) node{$\RR$};
\draw [rotate=  0][->] (-1.43,-1) -- (1.43,-1);
\draw [rotate=120][->] (-1.43,-1) -- (1.43,-1);
\draw [rotate=240][->] (-1.43,-1) -- (1.43,-1);
\draw [rotate= 90](2.5,0) node{$d_1=2$};
\draw [rotate=210](2.5,0) node{\rotatebox{120}{$d_2=2$}};
\draw [rotate=330](2.5,0) node{\rotatebox{240}{$d_3=1$}};
\draw [rotate=30+120] (0.7,0) node{\rotatebox{ 60}{$A_a$}};
\draw [rotate=30+120] (1.3,0)
  node{\rotatebox{ 60}{$\CC \otimes_{\CC} \CC^{\mathrm{id}_{\CC}} \cong \CC $}};
\draw [rotate=30+  0] (0.7,0) node{\rotatebox{-60}{$A_c$}};
\draw [rotate=30+  0] (1.3,0)
  node{\rotatebox{-60}{$\RR \otimes_{\RR} \CC^{\mathrm{id}_{\RR}} \cong \CC $}};
\draw [rotate=30+240] (0.7,0) node{\rotatebox{  0}{$A_b$}};
\draw [rotate=30+240] (1.3,0)
  node{\rotatebox{  0}{$\CC \otimes_{\RR} \RR^{\mathrm{id}_{\RR}} \cong \CC $}};
\end{tikzpicture}
\end{figure}
\noindent
modulo $\I = (A_a\otimes_{\CC} A_b) \oplus (A_b\otimes_{\RR} A_c) \oplus (A_c \otimes_{\CC} A_a)$. Thus,
\begin{align*}
  A := A/\I  =~& \CC\e_1 + \CC\e_2 + \RR\e_3 + A_a + A_b + A_c + \I \\
  =~& \RR\e_1 + {\color{blue}\RR\rmi\e_1} + \RR\e_2 + {\color{blue}\RR\rmi\e_2} + \RR\e_3 +  \\
    & \RR a + {\color{red}\RR\rmi a} + \RR b + {\color{red}\RR\rmi b} + \RR c + {\color{red}\RR\rmi c} + \I
\end{align*}
is a finite-dimensional $\RR$-algebra whose dimension is $11$. One can check that $\e_1$, $\e_2$, $\e_3$ is a completed primitive orthogonal idempotent set of $A$ and $\rad A = \RR a + \RR\rmi a + \RR b + \RR\rmi b + \RR c + \RR\rmi c + \I$, it follows that the bound quiver $(\Q_A,\I_A)$ of $A$ is given by the quiver $\Q_A$ shown in \Pic \ref{fig:quiver in exp-1}
\begin{figure}[htbp]
\centering
\begin{tikzpicture} 
\draw (-1.73,-1) node{$2$} (0,2) node{$1$} (1.73,-1) node{$3$};
\draw [rotate=  0][->] (-1.43,-1) -- (1.43,-1);
\draw [rotate=120][->] (-1.43,-1) -- (1.43,-1);
\draw [rotate=240][->] (-1.43,-1) -- (1.43,-1);
\draw [rotate= 10][->][red] ( 1.73,-1) arc(-30:70:2);
\draw [rotate=130][->][red] ( 1.73,-1) arc(-30:70:2);
\draw [rotate=250][->][red] ( 1.73,-1) arc(-30:70:2);
\draw [rotate=-3+ 90][rotate around={8:(2.2,0)}][blue] [->] (2.2,0) arc(180:520:0.75);
\draw [rotate=-3+210][rotate around={8:(2.2,0)}][blue] [->] (2.2,0) arc(180:520:0.75);
\draw [rotate=30+120] (1.13,0) node{$a$};
\draw [rotate=30+240] (1.13,0) node{$b$};
\draw [rotate=30+360] (1.13,0) node{$c$};
\draw [rotate=30+120][red] (2.23,0) node{$a'$};
\draw [rotate=30+240][red] (2.23,0) node{$b'$};
\draw [rotate=30+360][red] (2.23,0) node{$c'$};
\draw [rotate= 90][blue] (3.9,0) node{$x_1$};
\draw [rotate=210][blue] (3.9,0) node{$x_2$};
\end{tikzpicture}
\caption{Quiver $\Q_A$}
\label{fig:quiver in exp-1}
\end{figure}
and the ideal $\I_A$ defined as
\begin{center}
  $\I_A = \langle x_1^2+\e_1, x_2^2+\e_2, x_1a'+a, a'-x_1a, x_2b'+b, b'-x_2b, c'x_1+c, c'-cx_1$,

   $a'x_2+a, a'-ax_2, ab, bc, ca, a'b', b'c', c'a', ab', bc', ca', a'b, b'c, c'a \rangle$.
\end{center}
Here, $a'$, $b'$, $c'$, $x_1$, and $x_2$ are corresponded by $\rmi a$, $\rmi b$, $\rmi c$, $\rmi\e_1$, and $\rmi\e_2$, respectively. Accordingly, \Pic \ref{fig:quiver in exp-2} is the modulation of $\Q_A$ corresponded by $A$.
\begin{figure}[htbp]
\centering
\begin{tikzpicture} 
\draw (-1.73,-1) node{$\RR$} (0,2) node{$\RR$} (1.73,-1) node{$\RR$};
\draw [rotate=  0][->] (-1.43,-1) -- (1.43,-1);
\draw [rotate=120][->] (-1.43,-1) -- (1.43,-1);
\draw [rotate=240][->] (-1.43,-1) -- (1.43,-1);
\draw [rotate= 10][->][red] ( 1.73,-1) arc(-30:70:2);
\draw [rotate=130][->][red] ( 1.73,-1) arc(-30:70:2);
\draw [rotate=250][->][red] ( 1.73,-1) arc(-30:70:2);
\draw [rotate=-3+ 90][rotate around={8:(2.2,0)}][blue] [->] (2.2,0) arc(180:520:0.75);
\draw [rotate=-3+210][rotate around={8:(2.2,0)}][blue] [->] (2.2,0) arc(180:520:0.75);
\draw [rotate=30+120] (1.25,0) node{\rotatebox{60}{$A_a\cong \RR$} };
\draw [rotate=30+240] (1.25,0) node{$A_b\cong \RR$};
\draw [rotate=30+360] (1.25,0) node{\rotatebox{-60}{$A_c\cong \RR$}};
\draw [rotate=30+120][red] (2.23,0) node{\rotatebox{60}{$A_{a'}\cong \RR$}};
\draw [rotate=30+240][red] (2.23,0) node{$A_{b'}\cong \RR$};
\draw [rotate=30+360][red] (2.23,0) node{\rotatebox{-60}{$A_{c'}\cong \RR$}};
\draw [rotate= 90][blue] (3.9,0) node{$A_{x_1}\cong \RR$};
\draw [rotate=210][blue] (3.9,0) node{\rotatebox{-45}{$A_{x_2}\cong \RR$ }};
\end{tikzpicture}
\caption{Quiver $\Q_A$}
\label{fig:quiver in exp-2}
\end{figure}
Here, for any arrow $\alpha \in (\Q_A)_1$, we have $A_{\alpha} = \RR \otimes_{\RR\cap\RR} \RR \cong \RR$.
Next, let $B = \kk\Q_B/\I_B$ be given by the quiver $\Q_B:=\Q$ and the admissible ideal $\I_B=\langle ab,bc,ca\rangle$.
Then $A/\J$ $(\cong B)$ induced an epimorphism
\[ \homo: A \to B, ~ x\mapsto x + \J, \]
where $\J=\langle x_1+\I_A, x_2+\I_A, a'+\I_A, b'+\I_A, c'+\I_A \rangle$.
Consider the restriction \[\homo|_{\II_A = [0,1]^{\times 11}}: \II_A \to B\] which is a function lying in the normed $(A,B)$-bimodue $\w{\bfS_{\homo}(\II_A)}$, and the $(A,B)$-homomorphism $\w{T}$ sends it to its integration
\begin{align*}
   & (\scrA_{\homo}^1)\int_{[0,1]^{\times 11}}\homo|_{[0,1]^{\times 11}} \dd\mu_{\II_A} \\
 =~& \sum_{i\in\{1,2,3\}}(\scrA_{\homo}^1)\int_{[0,1]^{\times 11}} r_{\e_i}(\e_i+\J) \dd\mu_{\II_A} \\
   &+ \sum_{\alpha\in\{a,b,c\}}(\scrA_{\homo}^1)\int_{[0,1]^{\times 11}} r_{\alpha}(\alpha+\J) \dd\mu_{\II_A} \\
 =~& \frac{1}{2}(1_B+a+b+c) + \J \in B
\end{align*}
in the sense of the category $\scrA_{\homo}^1$.
Here, $\mu_{\II_A}$ is a Lebesgue measure $\mu$ and each summand can be viewed as a Lebesgue integration
\begin{center}
$\displaystyle (\scrA_{\homo}^1)\int_{[0,1]^{\times 11}} k \dd\mu_{\II_A}
= \bigg( (\Lebesgue)\int_0^1\dd\mu \bigg)^{n-1} \bigg( (\Lebesgue)\int_0^1 k \dd k \bigg) = \frac{1}{2}$
\end{center}
in the sense of the $\RR$-linear isomorphism $\RR b_i \cong \RR$ ($b_i\in \{b_i \mid 1\=< i \=< 6\} := \{\e_1+\J,\e_2+\J,\e_3+\J, a+\J, b+\J , c+\J\} = \basis{B}$, and one can check that this isomorphism is also an $(A,B)$-isomorphism since $\RR(b_i+\J)$ is a normed $(A,B)$-bimodule).
Moreover, if we do not use the $(A, B)$-linearity of $\displaystyle \w{T} = (\scrA_{\homo}^1)\int_{[0,1]^{\times 11}}(\cdot)\dd\mu_{\II_A}$, then
\[
   (\scrA_{\homo}^1)\int_{[0,1]^{\times 11}}\homo|_{[0,1]^{\times 11}} \dd\mu_{\II_A}
 = (\Bochner){\iint\cdots\int}_{[0,1]^{\times 11}} (A\to A/\J) \dd\mu
\]
is a Bochner integration.
\end{example}


%
%
%

\section*{Funding}

\addcontentsline{toc}{section}{Funding}

Yu-Zhe Liu is supported by
the National Natural Science Foundation of China (Grant No. 12401042, 12171207),
Guizhou Provincial Basic Research Program (Natural Science) (Grant Nos. ZD[2025]085 and ZK[2024]YiBan066)
and Scientific Research Foundation of Guizhou University (Grant Nos. [2022]53, [2022]65, [2023]16).

\section*{Acknowledgements}

I am greatly indebted to Shengda Liu and Mingzhi Sheng for helpful suggestions.

\addcontentsline{toc}{section}{Acknowledgements}


\addcontentsline{toc}{section}{References}

   \bibliographystyle{abbrv} 

 \bibliography{referLiu20250515}


%
%
%
%
%
%

\end{document}